\renewcommand\subsection{\@startsection{subsection}{3}{\z@}%
                                     {-3.25ex\@plus -1ex \@minus -.2ex}%
                                     {-1em}
                                     {\normalfont\normalsize\bfseries}}
\newtheorem{theorem}{Theorem}[section]
\newtheorem{lemm}[theorem]{Lemma}
\newtheorem{definition}[theorem]{Definition}
\newtheorem{remark}[theorem]{Remark}
\newtheorem{example}[theorem]{Example}
\newtheorem{coro}[theorem]{Corollary}
\newtheorem{prop}[theorem]{Proposition}
\newtheorem{prop-def}[theorem]{Proposition-Definition}
\numberwithin{equation}{section}
\newcommand{\Hom}{\mathrm{Hom}}
\renewcommand{\bar}{\overline}
\renewcommand{\d}{\mathrm{d}}
\renewcommand{\hat}{\widehat}
\newcommand{\vol}{\mathrm{vol}}
\newcommand{\Gal}{\mathrm{Gal}}
\newcommand{\ad}{\mathrm{ad}}
\newcommand{\Ad}{\mathrm{Ad}}
\newcommand{\pr}{\mathrm{p}}
\DeclareMathOperator{\Spec}{Spec}
\newcommand{\AAA}{\mathbb{A}}
\renewcommand{\ggg}{\mathfrak{g}}
\newcommand{\kkk}{\mathfrak{k}}
\renewcommand{\lll}{\mathfrak{l}}
\newcommand{\nnn}{\mathfrak{n}}
\newcommand{\mmm}{\mathfrak{m}}
\newcommand{\ppp}{\mathfrak{p}}
\newcommand{\zzz}{\mathfrak{z}}
\newcommand{\ttt}{\mathfrak{t}} 
\newcommand{\ooo}{\mathcal{O}}
\newcommand{\ago}{\mathfrak{a}}
\renewcommand{\leq}{\leqslant}
\renewcommand{\geq}{\geqslant}
\newcommand{\htau}{\hat \tau}
\newcommand*{\rom}[1]{\expandafter\@slowromancap\romannumeral #1@}
\begin{document}


\setcounter{secnumdepth}{3}
\setcounter{tocdepth}{1}

\hypersetup{							
pdfauthor = {HONGJIE YU},			
pdftitle = {a coarse geometric expansion},			
}					
\title[A coarse geometric expansion and some applications]{A coarse geometric expansion of a variant of Arthur's truncated traces and some applications
} 
\author{HONGJIE YU }
\address{IST Austria, Am Campus 1, 3400 Klosterneuburg, Austria. Current address: Weizmann Institute of Science, Herzl St 234, Rehovot, Israel \\Email: \url{hongjie.yu@weizmann.ac.il}}
\maketitle

\renewcommand{\abstractname}{Abstract}
\begin{abstract}
Let $F$ be a global function field with constant field $\mathbb{F}_q$. Let $G$ be a reductive group over $\mathbb{F}_q$. 
We establish a variant of Arthur's truncated kernel for $G$ and for its Lie algebra which generalizes Arthur's original construction. We establish a coarse geometric expansion for our variant truncation. 

As applications, we consider some existence and uniqueness problems of some cuspidal automorphic representations for the functions field of the projective line $\mathbb{P}^1_{\mathbb{F}_q}$ with two points of ramifications. 
\end{abstract}

\tableofcontents

\section{Introduction}
In \cite{A1}, Arthur defines a truncated kernel, and establishes some of its basic properties. He then gives a coarse geometric expansion whose integrals give the geometric side of his non-invariant trace formula. Arthur's work is over a number field.  
Later in \cite{Laf}, Lafforgue shows that Arthur's truncated kernel is well behaved over a function field for $GL_n$ as well as its inner forms. There is no restriction on the characteristic, while there is no coarse geometric expansion either, because of the lack of the Jordan-Chevalley decomposition. Lafforgue used the Weil's dictionary between adeles and vector bundles. He shows that the truncated kernel over a function field has some nicer properties than that over a number field.

Let $F$ be a global function field with constant finite subfield $\mathbb{F}_q$. In this article, we generalize Lafforgue's results to a reductive group over $\mathbb{F}_q$. We also consider a variant version of Arthur's truncated kernel inspired by the $\xi$-stability of Hitchin pairs defined and studied by Chaudouard and Laumon (\cite{C-L1}). For some special test functions, this variant version will have nicer properties than the Arthur's original version. 
We also establish a Lie algebra version of non-invariant trace formula following the number field case given by Chaudouard (\cite{ChauLie}).

We explain some phenomena that do not show up for a number field first.

If $p\mid |\pi_1(G^{der})|$, there exists a unipotent element which is not contained in any proper parabolic subgroup, for example the element  \[ u=\begin{pmatrix}
0 & 1\\ c &0 
\end{pmatrix}
 \]
in $PGL_2(F)$ if $char(F)=2$ and $c$ is not a square element. Fortunately, Gille \cite[Theorem 2]{Gille} (see Proposition \ref{Gille}) has shown that such phenomenon does not happen when $p\nmid |\pi_1(G^{der})|$ for a field such that $[F:F^p]\leq p$. We'll extend Arthur's constructions to function field under this hypothesis. Note that if $G^{der}$ is simply connected or the characteristic is very good for $G$ (in the sense of \cite[I.4]{SS}), then the hypothesis is satisfied. 

Another problem is the lack of Jordan-Chevalley decomposition. Recall that we say that an element $\gamma\in G(F)$ admits a Jordan decomposition if its unipotent part and semi-simple part of the Jordan decomposition in $G(\bar{F})$ lies in $G(F)$. To have a geometric expansion of the truncated trace,  we need to define a partition on $G(F)$:
\begin{equation} \label{partition} G(F)=\coprod_{o\in \mathcal{E}}o, \end{equation}
which should be compatible with conjugations.  
Over a number field, Arthur defines an equivalence relation on $G(F)$ so that two elements are equivalent if their semi-simple parts are conjugate. 
Such definitions do not work directly for a field in positive characteristic, but we can define the same equivalence relation only on those elements admitting a Jordan-Chevalley decomposition and putting those which do not admit Jordan-Chevalley decomposition in a single class. Then we give a coarse expansion under this partition. 

To be precise, with notations commonly used in the literature which will be introduced in the Section \ref{notation}, fixing two truncation parameters $X\in \ago_B$ and $\xi\in \ago_{B,\infty}$ (where Arthur uses $T$ for $X$ and $\xi$ is an additional parameter depending on a fixed place $\infty$), we construct following Arthur a linear function  \[ J^{G,\xi,X}: \mathcal{C}_c^{\infty}(G(\AAA))\rightarrow \mathbb{C}, \]
which generalizes Arthur's ``truncated trace''. For each $o\in\mathcal{E}$ as in \eqref{partition}, we also define a linear function  \[ J_o^{G,\xi,X}: \mathcal{C}_c^{\infty}(G(\AAA))\rightarrow \mathbb{C}, \]
such that \[ J^{G,\xi,X}=\sum_{o\in\mathcal{E}} J_{o}^{G,\xi,X}. \]
Note that we always assume $p\nmid |\pi_1(G^{der})|$. 
For $GL_n$, Lafforgue has given a very simple formula for Arthur's truncated trace when the truncation parameter is taken to be deep enough in the positive chamber, we show that there is a similar formula for $J_{o}^{G,\xi,X}$ for any constant reductive group (cf. Theorem \ref{Main}). In addition to the group version, we also establish a coarse trace formula for the Lie algebra of $G$ under more restrictive hypothesis on the characteristics. 

Finally, let me explain about the reason  for introducing another parameter $\xi$ and a version for a Lie algebra. 

Over a number field, the coarse geometric expansion is refined by Arthur into a sum of weighted orbital integrals, where the weight factor is given by a volume of convex envelope in a vector space. Over a function field, such volume will be replaced by a count of a lattice points bounded by a convex envelope, which is not so pleasant because of those lattice points lying on the boundary of the convex envelope. The additional parameter $\xi$ is aimed to translate the convex envelope so that there are no longer points on the boundary if $\xi$ is in general position. In fact,  Chaudouard  and Laumon (\cite[11.14.3]{C-L1}) showed that for regular semi-simple weighted orbital integrals, by introducing a $\xi$ in general position, one recovers Arthur's weight factor. 

Moreover, we showed in \cite[Appendix B]{Yu} that $J^{\ggg, \xi}(f)$ (where $X$ is taken to be $0$) for some simple test function $f\in \mathcal{C}_c^{\infty}(\ggg(\AAA))$ gives the groupoid volume of semi-stable parabolic Hitchin bundles with a stability parameter corresponding to $\xi\in \ago_B$.
Therefore a Lie algebra version could be useful to the study of Hitchin's moduli spaces. 
For the group $GL_n$, it has been studied by Chaudouard in \cite{Chau}. We study it for a more general group in \cite{Yu}.

\begin{remark}
This work is motivated by an application in \cite{Yu}. 
In the preparation of these articles, there appears the work of Labesse and Lemaire \cite{LL} that, among many other results, also establishes a coarse geometric expansion of Arthur's non-invariant trace formula. They give a coarse geometric expansion by introducing the notion of primitive pairs 
(cf. \cite[3.3]{LL}) which works in any characteristic. 
Note that under the hypothesis $p\nmid |\pi_1(G^{der})|$, the coarse expansion given by them coincides with the expansion in this article for those classes which admit a Jordan decomposition (which can be seen from the beginning of the proof of the Proposition \ref{unipotent}), and is  finer otherwise. 
After these remarks, theirs works overlap the the main results of the current paper and are more general than the cases treated here.

There still remain some novelties for the constructions in this article. First of all, the approach here is different from \cite{LL}. The key ingredient in our proof is that we use Behrend's complementary polyhedra and semi-stable reductions of $G$-bundles as the source of reduction theory and some combinatoric lemmas. A direct advantage is that we do not need the truncation parameter $X$ to be ``regular enough'' to have integrability of the truncated kernel. 
Secondly, for $GL_n$, Lafforgue has given a very simple formula for Arthur's truncation when the truncation parameter is taken to be deep enough in the positive chamber, we generalize it to any constant reductive group (cf. Theorem \ref{Main}). Lastly, the introduction of an additional parameter $\xi$ and the trace formula for Lie algebras are new for a function field and they're not useless as showed by the applications in the Section \ref{appl} and in \cite{Yu}.
\end{remark}

\subsection{Some applications}
The main theorems of this article are stated in the Section \ref{maintrace}. To motivate the reader, we present here some applications that partially confirm a conjecture made to me personally by Harris. Note that the requirement that $\xi$ is chosen in general position and our Lie algebra version trace formula are essentially used in the proofs. 

We consider the case that $F=\mathbb{F}_q(t)$ is the function field of the projective line $\mathbb{P}^1=\mathbb{P}^{1}_{\mathbb{F}_q}$. Let $G$ be a split reductive group defined over $\mathbb{F}_q$ such that the characteristic $p$ is very good for $G$ (cf. \ref{chara}). Let $\lambda, \mu \in \mathbb{P}^1(\mathbb{F}_q)$ be two distinct $\mathbb{F}_q$-rational points, identified as two places of $F$. 
Let $T_\lambda$ and $T_\mu$ be two maximal torus of $G$ defined over $\mathbb{F}_q$. Suppose that $\theta_\lambda$ and $\theta_\mu$ are characters in general position of $T_\lambda(\mathbb{F}_q)$ and $T_\mu(\mathbb{F}_q)$ respectively. We obtain by Deligne-Lusztig induction (fixing any isomorphism between $\mathbb{C}$ and $\overline{\mathbb{Q}}_\ell$) two irreducible representations 
\[\rho_\lambda=\epsilon_{T_\lambda}\epsilon_G R_{T_\lambda}^{G}(\theta_\lambda) \text{ and }  \rho_\mu=\epsilon_{T_\mu}\epsilon_G R_{T_\mu}^{G}(\theta_\mu)\] of $G(\mathbb{F}_q)$, where $\epsilon_{H}=(-1)^{rk_{\mathbb{F}_q}H}$ is the sign according to the parity of the split rank of $H$. 
Let $\mathcal{O}_\lambda$ and $\mathcal{O}_\mu$ be the local ring in $\lambda$ and $\mu$ respectively. 
By inflation, we get irreducible representations of $G(\mathcal{O}_\lambda)$ and $G(\mathcal{O}_\mu)$ respectively, still denoted by $\rho_\lambda$ and $\rho_\mu$.

\begin{theorem}[Theorem \ref{nonexistence}]
Suppose that $T_\lambda$ is split and $G$ is not a torus. Let $M_\mu$ be the minimal Levi subgroup containing $T_\mu$ defined over $\mathbb{F}_q$. Without loss of generality, we suppose that $M_\mu$ contains $T_\lambda$. Under the hypothesis that
 \begin{equation}\label{1.1} \theta_\lambda^w \theta_\mu |_{Z_M(\mathbb{F}_q)} \neq 1,  \end{equation}
for any Weyl element $w\in W$ and for any maximal proper semi-standard Levi subgroup $M$ of $G$ that contains $M_\mu$. 
There is no cuspidal automorphic representation $\pi=\otimes'\pi_v$ of $G(\AAA)$ such that $\pi_\lambda$ contains $(G(\mathcal{O}_\lambda), \rho_\lambda)$,  $\pi_\mu$ contains $(G(\mathcal{O}_\mu), \rho_\mu)$ and all other local components are unramified. 
\end{theorem}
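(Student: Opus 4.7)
\emph{Proof plan.} My plan is to apply the coarse geometric trace formula $J^{G,\xi,X}=\sum_o J_o^{G,\xi,X}$ to a test function that would detect the hypothetical cuspidal $\pi$ on the spectral side, and to show that the geometric side vanishes under hypothesis \eqref{1.1}. Concretely, I would take $f=f_\lambda\otimes f_\mu\otimes\prod_{v\notin\{\lambda,\mu\}}\mathbf{1}_{G(\mathcal{O}_v)}$, where $f_\lambda$ is $\dim(\rho_\lambda)\overline{\chi_{\rho_\lambda}}$ pulled back via $G(\mathcal{O}_\lambda)\twoheadrightarrow G(\mathbb{F}_q)$ and extended by zero outside $G(\mathcal{O}_\lambda)$, and similarly for $f_\mu$. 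Any cuspidal $\pi$ of the specified local type contributes a strictly positive term to the spectral side of $J^{G,\xi,X}(f)$, so it suffices to prove $J^{G,\xi,X}(f)=0$ for some $\xi$ in general position and some $X$ deep in the positive chamber.

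The next step is a reduction to $G(\mathbb{F}_q)$. Because $f$ is supported in $\prod_v G(\mathcal{O}_v)$ at every place, only $\gamma\in G(F)$ that is everywhere integral can contribute to the geometric expansion; since $G$ is split over $\mathbb{F}_q$ and $\mathbb{P}^1$ has no non-constant regular functions, such $\gamma$ collapse to $G(\mathbb{F}_q)$. Since $p$ is very good for $G$, each such $\gamma$ admits a Jordan decomposition $\gamma=\gamma_s\gamma_u$, and I would transfer the unipotent fibre to the nilpotent side via a Cayley/exponential map and handle it through the Lie algebra coarse expansion established in the paper, applied at the centraliser $G_{\gamma_s}$. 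Choosing $\xi$ in general position ensures that the Behrend-polytope lattice counts appearing in the weight factors avoid the boundary, so the explicit formula of Theorem~\ref{Main} expresses each $J_o^{G,\xi,X}(f)$ as a product of a character value $\chi_{\rho_\lambda}(\gamma_s)\chi_{\rho_\mu}(\gamma_s)$ with a weighted nilpotent integral on the Lie algebra side.

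The vanishing should then come from Deligne--Lusztig character theory: expanding $\chi_{\rho_\lambda}(\gamma_s)\chi_{\rho_\mu}(\gamma_s)$ together with the weighted nilpotent contribution via Frobenius reciprocity and the Deligne--Lusztig character formula, I expect to reduce, class by class, to inner products $\langle \theta_\lambda^w,\theta_\mu\rangle$ restricted to centres of proper semi-standard Levi subgroups $M\supset M_\mu$, with $w$ ranging over Weyl elements; hypothesis \eqref{1.1} then kills each such inner product and hence the whole sum. The main obstacle will be the bookkeeping in the previous step: writing $J_o^{G,\xi,X}(f)$ in a form where the Deligne--Lusztig orthogonality applies cleanly, and controlling the weighted nilpotent integrals uniformly across centralisers as $\gamma_s$ varies. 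This is precisely where the three ingredients developed in the paper are indispensable---the Lafforgue-type explicit formula of Theorem~\ref{Main} in the deep-chamber regime, the Lie algebra trace formula that governs the unipotent fibres, and the generic parameter $\xi$ that eliminates boundary pathologies in the polytope counts.
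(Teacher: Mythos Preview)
Your plan misidentifies where hypothesis \eqref{1.1} enters. In the paper, \eqref{1.1} is used \emph{only on the spectral side}: it is exactly the criterion making $\rho$ a ``cuspidal filter'', so that $J^{G,\xi}(e_\rho)=\sum_\pi m_\pi\dim\Hom_{G(\ooo)}(\rho,\pi)$ with the sum restricted to cuspidal $\pi$. The vanishing of the geometric side does not use \eqref{1.1} at all, and there is no reduction ``class by class to inner products $\langle\theta_\lambda^w,\theta_\mu\rangle$ on $Z_M(\mathbb{F}_q)$''. Your proposed mechanism for the vanishing therefore does not match what actually happens and, as stated, is a genuine gap.

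The paper's argument differs from your outline in three further ways. First, the test function at $\lambda$ is taken at \emph{Iwahori} level: $e_\rho$ lives on $G(\ooo^{\lambda})\mathcal{I}_\lambda$ with the character $\theta_\lambda$ inflated from $T(\mathbb{F}_q)$ to $\mathcal{I}_\lambda$, not the full $\rho_\lambda$ on $G(\ooo_\lambda)$. This Iwahori refinement is what makes available a vanishing theorem (\cite[Theorem 3.2.5]{Yu}) forcing $J^{G,\xi}_o(e_\rho)=0$ unless $o$ is represented by an elliptic $\sigma\in T(\mathbb{F}_q)$; since $T$ is split, ellipticity gives $Z_{G_\sigma^0}^0=Z_G^0$, so $G_\sigma^0$ is not a torus. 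Second, after descent to $G_\sigma^0$ via the same vanishing theorem and the Deligne--Lusztig character formula, one passes to the Lie algebra through Springer's hypothesis and applies the Fourier trace formula $J^{\ggg_\sigma,\xi}(\varphi)=q^{(1-g)\dim\ggg_\sigma}J^{\ggg_\sigma,\xi}(\hat\varphi)$. Third, and this is the actual vanishing mechanism: $\hat\varphi_\mu$ is supported on \emph{regular semisimple} elements (it is built from the characteristic function of a regular orbit), while the vanishing theorem applied to $\hat\varphi$ forces only central classes $z\in\zzz_{\ggg_\sigma}(\mathbb{F}_q)$ to contribute; a central element is never regular in a non-toral group, so each term is zero. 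The parameter $\xi$ in general position is needed for the vanishing theorem from \cite{Yu}, not for polytope boundary issues in the way you describe.
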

\begin{remark}
If the torus $T_\mu$ is maximally anisotropic over $\mathbb{F}_q$, then the hypothesis \eqref{1.1}  holds automatically. It is expected (communicated to me by Harris) that the theorem holds under the weaker hypothesis that $\rho_\lambda$ is not contragredient to $\rho_\mu$ and we should have multiplicity one  when $\rho_\lambda$ is contragredient to $\rho_\mu$ at least when $G$ is split semi-simple and simply connected. We verify this statement in a special case in the Theorem \ref{SLl}.  Note that we have showed in \cite{Yu} that if we allow at least three points of ramification, then there are ``many'' such automorphic cuspidal representations when $q$ is large. 
\end{remark}

\begin{theorem}[Theorem \ref{102}]\label{SLl}
Let $G=SL_l$ with $l$ being a prime number different from $p$. Suppose that $\rho_\lambda$ and $\rho_\mu$ constructed before are cuspidal. 
 If $\rho_\lambda$ is  contragredient to $\rho_\mu$, then there is exactly one cuspidal automorphic representation $\pi=\otimes'\pi_v$ of $G(\AAA)$ (with cuspidal multiplicity $1$) such that $\pi_\lambda$ contains $(G(\mathcal{O}_\lambda), \rho_\lambda)$,  $\pi_\mu$ contains $(G(\mathcal{O}_\mu), \rho_\mu)$ and all other local components are unramified. 
If $\rho_\lambda$ is not contragredient to $\rho_\mu$, then there is no such cuspidal automorphic representation. 
\end{theorem}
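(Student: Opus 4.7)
The plan is to apply the coarse geometric trace formula of Theorem \ref{Main} to a carefully chosen test function $f = \bigotimes_v f_v$, with $f_\lambda$ (resp.\ $f_\mu$) a suitably normalized matrix coefficient of $\rho_\lambda$ (resp.\ $\rho_\mu$) extended by zero off $G(\ooo_\lambda)$ (resp.\ $G(\ooo_\mu)$), and $f_v = \mathbf{1}_{G(\ooo_v)}$ at every other place. Since $\rho_\lambda$ and $\rho_\mu$ are cuspidal Deligne--Lusztig representations, $f_\lambda$ and $f_\mu$ are (up to normalization) pseudo-coefficients of supercuspidal representations, so the spectral side of the trace formula collapses: only cuspidal automorphic representations $\pi$ of the type described in the statement can contribute, each with local trace product equal to one. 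Hence the spectral side computes precisely the number of such $\pi$ counted with cuspidal multiplicity, the continuous and residual contributions vanishing thanks to having cuspidal coefficients at two places.

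For the geometric side, Theorem \ref{Main} replaces $J^{G,\xi,X}(f)$ by a finite sum $\sum_o J_o^{G,\xi,X}(f)$ with an explicit simple formula when $X$ is deep in the positive chamber. The cuspidality of $f_\lambda$ and $f_\mu$ kills every class $o$ except those represented by $\gamma\in G(F)$ which is elliptic regular at both $\lambda$ and $\mu$. For $G=SL_l$ with $l$ prime, such a $\gamma$ has irreducible characteristic polynomial and its centralizer is a norm-one torus attached to a degree $l$ field extension $E/F$ that remains a field at $\lambda$ and at $\mu$; the everywhere-integrality condition imposed by $\mathbf{1}_{G(\ooo_v)}$ at all the other places, together with the genus bound on $\mathbb{P}^1$, restricts $E$ and $\gamma$ to a very short list of possibilities. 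Passing to the Lie algebra trace formula (highlighted in the introduction as essential for these applications) makes this classification tractable and linearizes the local orbital integrals, circumventing the Jordan--Chevalley pathologies in positive characteristic flagged earlier.

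Plugging in the Deligne--Lusztig character formula for $\chi_{\rho_\lambda}(\bar\gamma_\lambda)$ and $\chi_{\rho_\mu}(\bar\gamma_\mu)$, the geometric side takes the shape
\[
\sum_{\gamma} c(\gamma) \, \theta_\lambda(\bar t_\lambda(\gamma)) \, \theta_\mu(\bar t_\mu(\gamma)),
\]
where $c(\gamma)$ packages the unramified orbital integrals as lattice-point counts (well-defined thanks to $\xi$ being in general position, so that no lattice points lie on the boundary of the relevant polytopes), and $\bar t_\lambda(\gamma),\bar t_\mu(\gamma)$ are the reductions of $\gamma$ into $T_\lambda(\mathbb{F}_q),T_\mu(\mathbb{F}_q)$ after a Weyl conjugation. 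Character orthogonality on the finite torus, combined with the generic position of $\theta_\lambda$ and $\theta_\mu$, forces the entire sum to vanish unless $\theta_\lambda$ and $\theta_\mu^{-1}$ are matched by some Weyl identification $T_\lambda\simeq T_\mu$ over $\mathbb{F}_q$---that is, unless $\rho_\lambda\simeq\rho_\mu^\vee$; in the contragredient case exactly one rational orbit (attached to the ``constant'' degree $l$ extension $E = F\otimes_{\mathbb{F}_q}\mathbb{F}_{q^l}$) survives and contributes with weight $1$.

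The main obstacle I anticipate is the honest enumeration of $G(F)$-conjugacy classes in terms of local conjugacy data: for $SL_l$ (unlike $GL_l$) this enumeration is sensitive to nontrivial torsors in $H^1(F,T_\gamma)$, and one must verify that the global class is determined up to a unique rational form by the prescribed local reductions at $\lambda,\mu$. The primality of $l$ is what makes this cohomological book-keeping manageable, while the general position of $\xi$ removes the boundary ambiguities in the weights $c(\gamma)$ that would otherwise spoil the clean character sum. Combining these ingredients with the orthogonality argument sketched above yields the stated dichotomy.
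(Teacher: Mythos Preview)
Your overall framework---trace formula with a test function built from matrix coefficients of $\rho_\lambda,\rho_\mu$, spectral side counting the desired multiplicity---matches the paper's. But there is a genuine gap in your geometric-side analysis, and one misattribution.

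First, the misattribution: in the paper's proof of this theorem one takes $\xi=0$, not $\xi$ in general position. The general-position hypothesis is used in Theorem~\ref{nonexistence} (via the vanishing results of \cite{Yu}); here the test function is supported in $G(\ooo)$, the contributing classes are reduced to elements of $T_\lambda(\mathbb{F}_q)$, and the orbital integrals are computed directly without any weighted-orbital-integral or polytope argument. Your remark about boundary lattice points plays no role.

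Second, and more seriously: it is not true that ``the cuspidality of $f_\lambda$ and $f_\mu$ kills every class $o$ except those represented by $\gamma$ elliptic regular at both places.'' The Deligne--Lusztig character $R_{T_\lambda}^{G}(\theta_\lambda)$ is nonzero on central elements and on unipotent elements (its value at $1$ is the degree), so the class $o=[1]$ (equivalently each central class) contributes a genuinely nonzero term $|Z_G(\mathbb{F}_q)|\,J^{G}_{unip}(e_\rho)$. Your proposal neither isolates this term nor explains how to evaluate it; the character-orthogonality argument you sketch only handles the regular elliptic part of the sum.

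The paper's device for computing $J^{G}_{unip}(e_\rho)$ is the point where the Lie algebra formula actually enters, and it is more specific than ``linearizing the local orbital integrals.'' One constructs a Lie-algebra test function $\varphi$ with $J^{\ggg}_{nilp}(\varphi)=J^{G}_{unip}(e_\rho)$ (via Springer's isomorphism and Kazhdan's verification of Springer's hypothesis), chosen so that its Fourier transform $\hat\varphi$ is supported at $\lambda$ and $\mu$ on two \emph{distinct} regular semisimple conjugacy classes. Then $J^{\ggg}(\hat\varphi)=0$ because no single rational class can match both local constraints, and the trace formula for Lie algebras (Theorem~\ref{TFL}) gives $J^{\ggg}(\varphi)=0$. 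Hence $J^{\ggg}_{nilp}(\varphi)=-\sum_{o\neq[0]}J^{\ggg}_o(\varphi)$, and the right-hand side is again a finite-torus character sum that one evaluates explicitly to $l(q-1)/(q^l-1)$. This cancels exactly against the deficit in the regular-elliptic sum, producing $1$ or $0$ according to whether $\rho_\lambda\cong\rho_\mu^\vee$. Without this Fourier-transform step (or an equivalent direct computation of the unipotent orbital integrals against a Deligne--Lusztig character, which is not straightforward), your argument does not close.
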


\subsection{Structure of the article}
In Section \ref{alggr}, we discuss the restrictions on the characteristic. 
The main theorems are announced  in Section \ref{maintrace}. To state the constructions (mainly to introduce our variant parameter $\xi$), we need preparations in Section \ref{u}. Later sections are devoted to the proof of theorems in the Section \ref{maintrace}. It is in Section \ref{proofm} where the main theorems of this article are proved. 
In Section \ref{essunip}, we present another expression for the truncated trace which is relatively easier to use. In Section \ref{appl}, we give some applications.

\subsection{Acknowledgement}
I'd like to thank Prof. Chaudouard for introducing me to this area. I'd like to thank Prof. Harris for asking me the question that makes  Section \ref{appl} possible. 
I'm grateful for the support of Prof. Hausel and IST Austria. The author was funded by an ISTplus fellowship: This project has received funding from the European Union's Horizon 2020 research and innovation programme under the Marie Sk\l odowska-Curie Grant Agreement No. 754411.

\section{Notations}\label{notation}
\subsection{Notations for the groups}
We use the language of group schemes, so a linear algebraic group $G$ over a ring $k$ is an affine group scheme $G\rightarrow \Spec(k)$ of finite type. We do not require it to be be smooth nor connected. Reductive groups are connected smooth group schemes with trivial geometric unipotent radical. Intersection and center are taken in the scheme theoretic sense.


$F, \bar{F}, \mathbb{F}_q$:  Throughout the article $F$ is a global function field with finite constant field $\mathbb{F}_q$, i.e. a field of rational functions over a smooth, projective and geometrically connected curve $C$ defined over $\mathbb{F}_q$. We fix an algebraic closure $\bar{F}$ of $F$.

$G^0$, $Z_G$, $G^{der}$, $G^{sc}$:   For a linear algebraic group $G$, we denote by $G^0$, $Z_G$, $G^{der}$ and $G^{sc}$ the connected component of the identity of $G$, the center of $G$, and the derived group of $G$, and the simply connected covering of $G^{der}$ respectively.

 $\ggg$: We denote by $\ggg$ the Lie algebra of $G$. In general, if a group is denoted by a capital letter, we use its lowercase gothic letter for its Lie algebra.

$G_x$, $G_X$, $C_G(H)$, $N_G(H)$:   When $G$ is defined over a field $F$ and $x\in G(F)$, and $X\in \ggg(F)$, we denote by $G_{x}$ (resp. $G_X$) the centralizer of $x$ (resp. the adjoint centralizer of $X$) in G. If $H$ is a subgroup of $G$ defined over $F$, we denote by $C_G(H)$ and $N_G(H)$ the centralizer and normalizer of $H$ in $G$ respectively. 

$G$, $B$, $T$, $A$, $\Xi_G$: Suppose that $G$ is a connected reductive group defined over $\mathbb{F}_q$. Let's fix a Borel subgroup $B$ defined over $\mathbb{F}_q$, a maximal torus $T$ defined over $\mathbb{F}_q$ contained in $B$. Let $A$ be the maximal subtorus of $T$ that is defined and split over $\mathbb{F}_q$. We fix a lattice $\Xi_G$ in $Z_G(F)\backslash Z_G(\AAA)$, i.e. a finitely generated free abelian group so that $Z_G(F)\backslash Z_G(\AAA)/\Xi_G$ is compact. For a field $k$ containing $\mathbb{F}_q$, we denote by $G_k$, or simply by $G$ itself if there is no harm, the base change of $G$ to $k$.

\subsection{Arthur's notations}

Let $|F|$ be the set of places of $F$. For each $v\in |F|$, let $F_v$ be the completion of $F$ at $v$, $\ooo_v$ be the  ring of integers of $F_v$ and $\kappa_v$ be the residue field of $\ooo_v$. Let $\AAA$ be the ring of adèles of $F$, and $\ooo$ be the ring of integral adèles.

We denote by $W^{(G,T)}=N_G(T)/C_G(T)$ be the Weyl group of $G$ with respect to $T$, understood as a finite étale group scheme over $F$ and \[W= W^{(G,T)}(F)=W^{(G,T)}(\mathbb{F}_q). \]
For each $s\in W$ we fix a representative $w_s\in N_G(T)(\mathbb{F}_q)$.



We will  use notations introduced by Arthur which are now more or less standard in this area. An excellent source to notations and their properties is \cite[Chapitre I]{LabWal}. For reader's convenience, instead of sending the reader to various articles of Arthur, we will prefer to cite this book if possible.

Throughout this article, we reserve letters $P, Q$ for parabolic subgroups, $M$ for a Levi subgroup of $G$ (i.e. Levi subgroup of a parabolic subgroup of $G$).  When $P,Q$ are semi-standard, we denote by $M_P$  the semi-standard Levi subgroup of $P$ and $N_P$ for the unipotent radical of $P$. 

\begin{enumerate}
\item[$\bullet$] $\mathcal{P}^{Q}(M)$ parabolic subgroups contained in $Q$ which admit $M$ as Levi subgroup; 
\item[$\bullet$] $\mathcal{P}(B)$ standard parabolic subgroups of $G$;
\item[$\bullet$] $X^{*}(P)_F=\Hom_F(P,\mathbb{G}_{m/F})$; 
\item[$\bullet$] $\ago_P^{*}= X^{*}(P)_F\otimes\mathbb{R}$; $\ago_P= \Hom_{\mathbb{R}}(\ago_P^{*}, \mathbb{R})$; $\ago_{P, \mathbb{Q}}= \Hom_{\mathbb{R}}(\ago_P^{*}, \mathbb{Q})$; 
\end{enumerate}
Let $A_P$ be the maximal split central torus defined over $F$ of $P$. We should think of $X^*(P)\subseteq X^*(A_P)$ as lattices in $\ago_P^{*}$ and think of $\ago_P^{*}$ as the space of linear functions on $\ago_P$.  

Now suppose that $P, Q$ are standard parabolic subgroups of $G$. We will use the following notations. 
\begin{enumerate}
\item[$\bullet$] $\langle \cdot, \cdot \rangle:$ the canonical pairing between $\ago_B^{*}$ and $\ago_B$;
\item[$\bullet$] $\ago_P=\ago_P^Q\oplus \ago_Q$ ($P\subseteq Q$), decomposition induced by $X^{*}(Q)\rightarrow X^{*}(P)$ and $X^{*}(A_P)\rightarrow X^{*}(A_Q)  $; 
\item[$\bullet$] $\Phi(P,A)$: roots of the torus $A$ acting on the Lie algebra of $\ppp$;
\item[$\bullet$] $\Delta_B$ simple roots in $ \Phi(B, A)$; 
\item[$\bullet$] $\Delta_B^P=\Delta_B\cap \Phi(M_P,A);$ $(\Delta_B^{P})^{\vee}$ the set of corresponding coroots; $\hat{\Delta}_B^P$ the set of corresponding fundamental weights; 
\item[$\bullet$] $\Delta_P^Q=\{\alpha|_{A_P} \mid \alpha \in \Delta_B^Q-\Delta_B^{P}\}$, viewed as a set of linear functions on $\ago_B$ via the projection $\ago_B\rightarrow \ago_P^Q$;
\item[$\bullet$] $\hat{\Delta}_P^Q=\{\varpi \in \hat{\Delta}_B^Q  \mid \varpi|_{\ago_B^P}=0  \}$;
\item[$\bullet$] $\htau_P^{Q}$ the characteristic function of $H\in \ago_B$ such that $\langle\varpi,H\rangle>0$ for all $\varpi\in \hat{\Delta}_P^Q$; 
\item[$\bullet$] $\tau_P^{Q}$ the characteristic function of $H\in \ago_B$ such that $\langle\alpha,H\rangle>0$ for all $\alpha\in \Delta_P^Q$; 
\end{enumerate}

As a general rule of notations, for a given place $\infty$ of $F$,
 we will add an index $\infty$ if a relevant notation is defined using datum over $F_\infty$, for example we have
\begin{enumerate}
\item[$\bullet$] $W_\infty$, $\ago_{P, \infty}$, $\Delta_{B, \infty}$, ... 
\end{enumerate}

\subsection{Harish-Chandra's map}
For each semi-standard parabolic subgroup $P$ of $G$ defined over $F$, we define the Harish-Chandra's map $H_P: P(\AAA)\rightarrow \ago_{P} $ by requiring   
\begin{equation}q^{\langle\alpha,  H_P(p)\rangle} = |\alpha(p)|_\AAA,  \quad \forall \alpha\in X^{*}(P)_F; \end{equation}
Using Iwasawa decomposition we may extend the definition of $H_P$ to $G(\AAA)$ by asking it to be $G(\ooo)$-right invariant. 

\subsection{Haar measures} 
The Haar measures are normalized in the following way.
For any place $v$ of $F$ and 
for a Lie algebra $\ggg$ defined over $\mathbb{F}_q$, the volumes of the sets $\ggg(\ooo_v)$ and $\ggg(\ooo)$ are normalized to be $1$; the volume of $G(\ooo_v)$ and $G(\ooo)$ are normalized to be $1$; for every semi-standard parabolic subgroup $P$ of $G$ defined over $\mathbb{F}_q$, the measure on $N_P(\AAA)$ is normalized so that $\vol(N_P(F)\backslash N_P(\AAA))=1$.

\section{Characteristic of the base field and unipotent/nilpotent elements}\label{alggr}

\subsection{}\label{chara}
In this article, we will treat separately the case of a group and the case of a Lie algebra, according to that we suppose that the hypothesis $(\ast)_G$ or the hypothesis $(\ast)_{\ggg}$ is satisfied. 
\begin{center}
\textit{\((\ast)_G\)
The characteristic does not divide $|\pi_1(G^{der})|$.}
\end{center}
\begin{center}
\textit{\((\ast)_\ggg\)
The characteristic is very good for $G$ or $G=GL_n$. The characteristic $p$ does not divide the degree of the minimal splitting field of $G$.}
\end{center}

We recall that if $G$ is simple, then the characteristic $p$ is said to be very good if $p\neq 2$ when $G$ is one of type $B,C,D$,  $p\neq 2,3$ if $G$ is of one of type $E,F,G$, and $p\neq 2,3,5$ if $G$ is of type $E_8$. For type $A_{n-1}$, we require that $p\nmid n$. In general, a prime is very good for $G$ if it's very good for every simple factor of $G^{sc}_{\overline{F}}$. 
For more discussion of ``very good primes", see \cite[I.4]{SS} or \cite[2.1]{McN}. In particular, $(\ast)_\ggg$ implies $(\ast)_G$. 

The following Proposition is well known to experts in algebraic groups over positive characteristic.
\begin{prop}\label{ch}
Let $G$ be a reductive group defined over $\mathbb{F}_q$ so that $G$. Then
\begin{enumerate}
\item  \label {1.} If $(\ast)_G$ is satisfied, the central isogeny  $G^{sc}\rightarrow G^{der}$ is smooth (equivalently is separable in older terminology).
\item \label{2.} If $(\ast)_\ggg$ is satisfied, 
the Lie algebra $\ggg$ admits a non-degenerate $G$-invariant bilinear form defined over $\mathbb{F}_q$.
 \end{enumerate}
\end{prop}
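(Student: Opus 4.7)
The plan is to handle the two parts separately, each reducing to a standard ingredient from the structure theory of reductive groups in positive characteristic.

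For (1), let $\mu = \ker(G^{sc}\to G^{der})$. This is a finite central subgroup scheme of $G^{sc}$, hence contained in $Z_{G^{sc}}$. Since $G^{sc}$ is semi-simple simply connected, $Z_{G^{sc}}$ is of multiplicative type over $\mathbb{F}_q$, so $\mu$ is of multiplicative type as an $\mathbb{F}_q$-group scheme. Its geometric order coincides with $|\pi_1(G^{der})|$. A finite group scheme of multiplicative type over a field of characteristic $p$ is étale (equivalently, smooth) if and only if its order is invertible in that field. The hypothesis $(\ast)_G$ provides precisely this invertibility, so $\mu$ is étale and the isogeny $G^{sc}\to G^{der}$ is smooth, i.e.\ separable.

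For (2), I treat $G=GL_n$ and the very-good-characteristic case separately. For $GL_n$, the trace pairing $(X,Y)\mapsto\Tr(XY)$ on $\ggg$ is manifestly an $\mathbb{F}_q$-rational, symmetric, non-degenerate, $\Ad(G)$-invariant form, and we are done. Assume henceforth that $p$ is very good for $G$. Over $\overline{\mathbb{F}_q}$, $\ggg^{sc}$ decomposes as a direct sum of simple Lie algebras, one for each simple factor of $G^{sc}$. A classical Dynkin-type-by-type discriminant computation (see e.g.\ \cite[I.4]{SS} or \cite{McN}) shows that in very good characteristic the Killing form is non-degenerate on each such simple summand. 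Since $(\ast)_\ggg$ implies $(\ast)_G$, part (1) yields a Lie algebra isomorphism $\ggg^{sc}\simeq\ggg^{der}$, through which a non-degenerate $\Ad(G)$-invariant symmetric form on $\ggg^{der}$ is obtained. The Killing form is defined intrinsically by traces in the adjoint representation, hence is automatically $\mathbb{F}_q$-rational; the hypothesis that $p$ does not divide the degree of the minimal splitting field $L/\mathbb{F}_q$ is what ensures that non-degeneracy is preserved under Galois descent from $L$ to $\mathbb{F}_q$, via the non-degeneracy of the trace form of the (coprime-to-$p$, hence automatically separable and well-behaved) extension $L/\mathbb{F}_q$. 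Finally, in very good characteristic one has the $\mathbb{F}_q$-rational decomposition $\ggg=\zzz\oplus\ggg^{der}$ with $\zzz=\mathrm{Lie}(Z_G^0)$; on $\zzz$ the adjoint action is trivial, so any non-degenerate $\mathbb{F}_q$-rational symmetric bilinear form on $\zzz$ is automatically $G$-invariant, and extending the form on $\ggg^{der}$ orthogonally by it produces the required form on $\ggg$.

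The main obstacle is the non-degeneracy of the Killing form in very good characteristic, together with the decomposition $\ggg=\zzz\oplus\ggg^{der}$ in that same range; both are standard in the literature on reductive groups in positive characteristic, and once located, the $\mathbb{F}_q$-rational assembly is essentially formal.
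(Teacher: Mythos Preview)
Your argument for part (1) is fine and matches the paper's. The problem is in part (2): your central claim that the Killing form is non-degenerate on each simple summand in very good characteristic is false. Take $G=SL_3$ in characteristic $2$: here $p=2$ is very good for type $A_2$ (the condition is $p\nmid 3$), yet the Killing form on $\mathfrak{sl}_3$ is $2\cdot 3\,\Tr(XY)=6\,\Tr(XY)\equiv 0$. More generally, the Killing form on a simple Lie algebra is a specific scalar multiple of the (unique up to scalar) invariant form, and that scalar can vanish in very good characteristic. What one needs is the existence of \emph{some} non-degenerate invariant form, not that the Killing form itself works; this is exactly what the paper extracts from \cite[Lemma 1.8.12]{KV} for the simply connected case.

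Your explanation of the splitting-field hypothesis is also off. Non-degeneracy of a bilinear form defined over $\mathbb{F}_q$ is a geometric condition (nonvanishing of the discriminant) and is insensitive to base change, so there is no ``descent of non-degeneracy'' issue; moreover every extension of finite fields is separable regardless of its degree, so coprimality to $p$ is not what makes $L/\mathbb{F}_q$ well-behaved. In the paper's approach the splitting-field hypothesis enters for a different reason: the specific invariant form constructed in \cite{KV} involves a normalization that can collapse to zero when $p$ divides the degree of the minimal splitting field, and the hypothesis rules that out. Once a non-degenerate form on $\ggg^{sc}$ is in hand, the extension to $\ggg$ via the separable isogeny $Z\times G^{sc}\to G$ and an arbitrary non-degenerate form on $\zzz$ proceeds just as you outline.
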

\begin{proof}


The fundamental group $\pi_1(G^{der})$ is the kernel of $G^{sc}\rightarrow G^{der}$. It is étale since $p$ does not divide the order of $\pi_1(G^{der})$, hence $G^{sc}\rightarrow G^{der}$ is smooth.

If $G$ is semi-simple and simply connected, the assertion \ref{2.} is proved in \cite[Lemma 1.8.12]{KV} (the hypothesis that the characteristic $p$ does not divide the degree of the minimal splitting field of $G$ ensures that the bilinear form in $loc.$ $cit.$ is non-zero).
In general, consider the central isogeny $Z\times G^{sc}\rightarrow G$, where $Z$ is the maximal central torus of $G$, it is separable since $Z\cap G^{sc} $ is a subgroup of $ Z_{G^{sc}}$ which has order prime to $p$. Therefore, we have a $G$-equivariant isomorphism of Lie algebras
\( \ggg\cong\ggg^{sc}\oplus \zzz. \)
We can extend a non-degenerate bilinear form on $\ggg^{sc}$ to $\ggg$ by choosing an arbitrary non-degenerate bilinear form on $ \zzz$.

\end{proof}

We know that every semi-simple element in $G(F)$ is contained in the set of $F$-points of some maximal torus defined over $F$.
We also know that a unipotent (resp. nilpotent) element in $G(F)$ (resp. $\ggg(F)$) is contained in the unipotent radical (resp. the Lie algebra of the unipotent radical) of a parabolic subgroup defined over $\bar{F}$. In positive characteristic, it is not always true that one can choose such a parabolic subgroup defined over $F$. We will need the following result of P. Gille which is applicable for all fields $F$ such that $[F:F^p]\leq p$, in particular for a global function field. 


\begin{prop}[Gille, \cite{Gille}]\label{Gille}
Let $G$ be a reductive group so that the hypothesis $(\ast)_G$ in the \ref{chara} is satisfied. Then every unipotent element  $u\in G(F)$ is contained in the unipotent radical of some parabolic subgroup of $G$ defined over $F$. 
\end{prop}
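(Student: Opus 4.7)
The plan is to invoke Gille's theorem from \cite{Gille} and to outline why the hypotheses of Proposition \ref{ch} provide exactly what Gille needs, together with the good behavior of the field $F$ (which satisfies $[F:F^p]\leq p$ since $F$ is a global function field over $\mathbb{F}_q$).

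First I would reduce to the case where $G$ is semisimple and simply connected. Let $u\in G(F)$ be unipotent. Its image in the quotient torus $G/G^{der}$ is both unipotent and semisimple, hence trivial, so $u\in G^{der}(F)$. Under $(\ast)_G$, Proposition \ref{ch}.\ref{1.} tells us that the central isogeny $\pi\colon G^{sc}\to G^{der}$ is separable, with kernel $\pi_1(G^{der})$ an \'etale central $F$-group scheme of order prime to $p$. A standard Galois-cohomological argument (using that $H^1(F,\pi_1(G^{der}))$ classifies the fiber of $\pi(F)$) together with the fact that a lift of a unipotent element in a central isogeny is unipotent modulo a central element of order prime to $p$, allows one to lift $u$ to a unipotent element $\tilde u\in G^{sc}(F)$. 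If $\tilde u$ lies in the unipotent radical of some $F$-parabolic $\tilde P\subseteq G^{sc}$, then $u=\pi(\tilde u)$ lies in the unipotent radical of the $F$-parabolic $\pi(\tilde P)\subseteq G^{der}\subseteq G$.

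Second, for $G$ simply connected, I would apply Gille's main theorem. The strategy in \cite{Gille} is to construct a smooth connected unipotent $F$-subgroup $U\subseteq G$ containing $u$; once this is done, the standard Borel--Tits theorem for connected unipotent $F$-subgroups of a reductive group over a field embeds $U$ into the unipotent radical of some $F$-parabolic. The delicate construction of $U$ rests on the hypothesis $[F:F^p]\leq p$, which ensures that certain descent obstructions from $\bar F$ to $F$ vanish, together with the separability of $G^{sc}\to G^{der}$ which keeps centralizers of unipotent elements well behaved.

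The main obstacle is the last step, namely producing a rational smooth connected unipotent $F$-subgroup containing $u$. That this is truly nontrivial, and that the hypothesis $(\ast)_G$ is essential, is illustrated by the example in the introduction: for $PGL_2$ with $p=2$ and $c\in F\setminus F^2$, the element $\begin{pmatrix}0&1\\ c&0\end{pmatrix}$ is unipotent and of order $2$, yet the Zariski closure of $\langle u\rangle$ is a non-smooth finite $F$-group scheme which does not embed into any smooth connected unipotent $F$-subgroup of $PGL_2$, hence is not contained in the unipotent radical of any proper $F$-parabolic. Since the author only uses the statement as a black box from \cite{Gille}, I would content myself here with citing \emph{loc.\,cit.} after the reductions above.
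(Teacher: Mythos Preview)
Your proposal is correct and follows essentially the same route as the paper: reduce from $G$ to $G^{der}$ via the torus quotient, lift unipotent elements from $G^{der}(F)$ to $G^{sc}(F)$ using that $\pi_1(G^{der})$ has order prime to $p$, and then invoke Gille's theorem for simply connected groups over fields with $[F:F^p]\leq p$. The only minor differences are that the paper cites specific results of Tits and Conrad for the transfer of parabolics and their unipotent radicals under the isogenies (which you gloss over), while you in turn add a sketch of Gille's internal strategy and the $PGL_2$ counterexample that the paper treats purely as a black-box citation.
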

\begin{proof}
The theorem is proved by Gille when $[F:F^p]\leq p$ for semi-simple and simply connected groups (\cite[Theorem 2, p. 313]{Gille}). 

In general, let $G^{sc}\rightarrow G^{der}$ be the simply connected covering of $G^{der}$ which is of degree prime to $p$ by our assumption, then any unipotent element in $G^{der}(F)$ is contained in the unipotent radical of some parabolic subgroup defined over $F$ by Proposition (ii) and (iii) of page 267 of \cite{Tits2}.
In fact, we have an exact sequence:
\[ 1\longrightarrow \pi_1(G^{der})(F) \longrightarrow G^{sc}(F)\longrightarrow G^{der}(F)\longrightarrow H^{1}(F, \pi_1(G^{der})). \]
Since $p\nmid | \pi_1(G^{der})|$ and unipotent elements have $p$-power orders, there is a bijection between unipotent elements in $G^{sc}(F)$ and $G^{der}(F)$. The rest is clear.

Now consider the exact sequence: 
 \[1\longrightarrow G^{der}(F)\longrightarrow G(F)\longrightarrow (G/G^{der})(F). \] The image of a unipotent element $u\in G(F)$ in $(G/G^{der})(F)$  is the identity as it's a unipotent element in the torus $G/G^{der}$. Hence $u$ belongs to $G^{der}(F)$. We know that $P\mapsto P\cap G^{der}$ defines a bijection between the set of parabolic subgroups of $G$ and that of $G^{der}$ (Proposition 5.3.4 \cite{Conrad}), such that the unipotent radical $N_{P\cap G^{der}}$ of $P\cap G^{der}$ in $G^{der}$ equals to $N_P\cap G^{der}$ (Proposition. 4.1.10.(2) of \cite{Conrad}). As $u$ is contained in the unipotent radical of an $F$-parabolic subgroup of $G^{der}$,  it is in the unipotent radical of the corresponding parabolic subgroup of $G$.

\end{proof}

\begin{prop}[McNinch, \cite{McN}] \label{McNinch}
Let $G$ be a reductive group so that the hypothesis $(\ast)_\ggg$ in the \ref{chara} is satisfied. 
Let $X\in \ggg(F)$ be a nilpotent element, then it's contained in the Lie algebra of the unipotent radical of a parabolic subgroup defined over $F$.
\end{prop}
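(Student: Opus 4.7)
The plan is to produce a cocharacter $\phi_X\colon\mathbb{G}_{m/F}\to G$ defined over $F$ that grades $\ggg$ so that $X$ has strictly positive weight, and then take the parabolic subgroup attached to $\phi_X$. This is McNinch's approach to ``Jacobson--Morozov in good characteristic'', and the hypothesis $(\ast)_\ggg$ is tuned exactly to make it work.

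First I would reduce to the simply connected semisimple case. By Proposition \ref{ch}(\ref{2.}), under $(\ast)_\ggg$ there is a $G$-equivariant decomposition $\ggg\cong \ggg^{sc}\oplus \zzz$, where $\zzz$ is the Lie algebra of the maximal central torus. In any faithful representation the center acts by scalars, so the $\zzz$-component of a nilpotent element must vanish; hence $X\in\ggg^{sc}(F)$. Because $G^{sc}\to G^{der}$ is separable by Proposition \ref{ch}(\ref{1.}) and $P\mapsto P\cap G^{der}$ is a bijection of parabolic subgroups compatible with unipotent radicals (Proposition 5.3.4 and Proposition 4.1.10(2) of \cite{Conrad}), it suffices to find an $F$-rational parabolic of $G^{sc}$ whose nilpotent radical contains $X$; pushing it down through the isogeny then gives the desired parabolic of $G$.

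Next, I invoke McNinch's theorem on associated cocharacters in good characteristic. Under $(\ast)_\ggg$, to every nilpotent $X\in\ggg^{sc}(F)$ one can attach a cocharacter $\phi_X\colon \mathbb{G}_{m/F}\to G^{sc}$, defined over $F$, such that $\Ad(\phi_X(t))\,X = t^{2}X$ for all $t$. Over $\bar F$ the existence of such a cocharacter is a Pommerening-type analogue of Jacobson--Morozov valid when $p$ is very good; the descent to $F$ is precisely the content of McNinch's paper on nilpotent orbits over ground fields of good characteristic, and the supplementary requirement in $(\ast)_\ggg$ that $p$ not divide the degree of the minimal splitting field of $G$ is what guarantees the required behaviour of $F$-rational orbits. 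With $\phi_X$ in hand, I would define
\[
P = P(\phi_X) = \Bigl\{\, g\in G^{sc} \,\Bigm|\, \lim_{t\to 0}\phi_X(t)\,g\,\phi_X(t)^{-1}\text{ exists}\,\Bigr\},
\]
a parabolic subgroup of $G^{sc}$ that is automatically defined over $F$ since $\phi_X$ is. Its Lie algebra is the sum of the non-negative $\Ad\circ\phi_X$-weight spaces in $\ggg^{sc}$, and the Lie algebra of its unipotent radical $N_P$ is the sum of the strictly positive weight spaces. Since $X$ has weight $+2>0$, it lies in $\nnn_P(F)$, finishing the reduction.

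The main obstacle is the $F$-rationality of the associated cocharacter $\phi_X$. Over $\bar F$ the entire argument is classical once one has a good-characteristic Jacobson--Morozov theorem, but descending to $F$ requires precisely McNinch's input, and this is where each of the conditions in $(\ast)_\ggg$ is used essentially: very-good characteristic for the existence of the graded $\mathfrak{sl}_2$-structure, and the splitting-degree hypothesis for Galois descent of the cocharacter.
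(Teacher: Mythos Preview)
Your approach is essentially the same as the paper's: invoke McNinch's $F$-rational associated cocharacter $\phi$ with $\Ad(\phi(t))X=t^2X$, and then take the dynamic parabolic $P_G(\phi)$, whose unipotent radical has Lie algebra the positive weight spaces.

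Two small points. First, your reduction to $G^{sc}$ is unnecessary: the paper applies McNinch's results (Theorem~26 and Proposition~6 of \cite{McN}) directly to $G$, and indeed those results are stated for reductive groups in good characteristic without any simply-connected hypothesis. Second, your attribution of the splitting-field condition in $(\ast)_\ggg$ to the Galois descent of the cocharacter is off. McNinch's descent argument requires only good characteristic; the condition that $p$ not divide the degree of the minimal splitting field is used elsewhere in the paper (Proposition~\ref{ch}(\ref{2.})) to produce a non-degenerate invariant bilinear form on $\ggg$, and is not needed for this proposition. Neither point affects the correctness of your argument.
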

\begin{proof}
Under our hypotheses, there is a cocharacter $\phi$ associated to X which is defined over $F$, i.e. $\lambda: \mathbb{G}_{m/F} \rightarrow G$ such that $\Ad(\phi(t))X=t^{2}X$ for all $t\in \mathbb{G}_m$ (\cite[Theorem 26 and Proposition 6]{McN}). This cocharacter defines the $F$-parabolic subgroup $P_G(\lambda):= \{g\in G\mid \lim_{t\rightarrow 0} \lambda(t)g\lambda(t)^{-1} \text{ exists}\}$ of $G$ such that $X$ lies in the Lie algebra of the unipotent radical $U_G(\lambda):= \{g\in G\mid \lim_{t\rightarrow 0} \lambda(t)g\lambda(t)^{-1}=1\}$ of $P(\lambda)$. 
\end{proof}

\section{Combinatorics of roots and coroots under field extension} \label{u}
\subsection{}
When studying reduction theory of adèles, we need some results concerning the combinatorics of roots and coroots under field extensions. 

Let $\infty$ be a place of $F$. Using the inclusion $X^{*}(T)_{F}\hookrightarrow X^{*}(T)_{F_\infty}$, we may view $\ago_B^{*}$ as a subspace of $\ago_{B, \infty}^{*}$. Taking dual of this embedding, we get a surjection
\begin{equation}\label{41}\pr_{\infty}:\ago_{B,\infty}\longrightarrow \ago_B.\end{equation}
Let's denote $\pr_\infty(H)$ by $[H]$. We define the local Harish-Chandra's function $H_{B, \infty}: G(F_\infty)\rightarrow \ago_{B, \infty} $ by requiring that for any $x=p k$ with $p\in B(F_\infty)$ and $k\in G(\ooo_\infty)$, one has 
\begin{equation} \label{HC} |\kappa_\infty|^{\langle\alpha,  H_{B,\infty}(x)\rangle_{\infty}} = |\alpha(p)|_\infty,  \quad \forall \alpha\in X^{*}(B)_{F_\infty}. \end{equation}

\begin{prop}\label{4.1}
Using the embedding $G(F_\infty)\hookrightarrow G(\AAA)$, the relation between global and local Harish-Chandra's map are given by 
\begin{equation}
H_B(g_\infty)=[\kappa_\infty:\mathbb{F}_q]  [H_{B,\infty}(g_\infty)], \quad \forall g_\infty\in G(F_\infty). 
\end{equation}
\end{prop}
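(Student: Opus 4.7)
The plan is to unwind both sides against the defining functional equations and reduce the statement to a compatibility of absolute values.

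First I would reduce to the case $g_\infty \in B(F_\infty)$. Using Iwasawa in $G(F_\infty) = B(F_\infty) G(\ooo_\infty)$, write $g_\infty = p_\infty k_\infty$. Viewed inside $G(\AAA)$ via the embedding at $\infty$, the component $k_\infty$ lies in $G(\ooo_\infty) \subseteq G(\ooo)$ (trivial at all other places). By the $G(\ooo)$-right-invariance of $H_B$ and the $G(\ooo_\infty)$-right-invariance of $H_{B,\infty}$, both sides of the claimed identity are unchanged if we replace $g_\infty$ by $p_\infty$, so we may assume $g_\infty = p_\infty \in B(F_\infty)$.

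Next I would test both sides against all characters $\alpha \in X^{*}(B)_F$. Since $X^{*}(B)_F$ spans $\ago_B^{*}$ (it is a lattice of full rank in this $\mathbb{R}$-vector space), the required equality in $\ago_B$ follows from the equality of real numbers
\[
\langle \alpha,\, H_B(p_\infty) \rangle \;=\; [\kappa_\infty:\mathbb{F}_q]\,\langle \alpha,\, [H_{B,\infty}(p_\infty)] \rangle
\quad \text{for every } \alpha \in X^{*}(B)_F.
\]
The right-hand pairing unfolds by the very definition of the projection $\pr_\infty$ in \eqref{41}: for $\alpha \in X^{*}(B)_F \subseteq X^{*}(B)_{F_\infty}$ one has $\langle \alpha, [H]\rangle = \langle \alpha, H\rangle_\infty$ for any $H \in \ago_{B,\infty}$, as $\pr_\infty$ is dual to the inclusion $X^{*}(B)_F \hookrightarrow X^{*}(B)_{F_\infty}$.

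It then remains to compare the defining relations. By the global definition, $q^{\langle \alpha, H_B(p_\infty)\rangle} = |\alpha(p_\infty)|_{\AAA}$. Since $p_\infty$ is embedded in $G(\AAA)$ with trivial components away from $\infty$, the product formula for $|\cdot|_{\AAA}$ collapses to the single local factor $|\alpha(p_\infty)|_\infty$. By the local definition \eqref{HC}, this equals $|\kappa_\infty|^{\langle \alpha, H_{B,\infty}(p_\infty)\rangle_\infty}$, and using $|\kappa_\infty| = q^{[\kappa_\infty:\mathbb{F}_q]}$ we obtain
\[
q^{\langle \alpha, H_B(p_\infty)\rangle} \;=\; q^{[\kappa_\infty:\mathbb{F}_q]\,\langle \alpha, H_{B,\infty}(p_\infty)\rangle_\infty}.
\]
Taking logarithms and combining with the identification of pairings above yields the desired scalar identity, and the proposition follows.

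There is no real obstacle here: the argument is purely formal, the only subtle point being the correct interpretation of $\pr_\infty$ as dual to $X^{*}(B)_F \hookrightarrow X^{*}(B)_{F_\infty}$, and the bookkeeping of the normalization $|\kappa_\infty| = q^{[\kappa_\infty:\mathbb{F}_q]}$, which is exactly the source of the factor $[\kappa_\infty:\mathbb{F}_q]$ in the statement.
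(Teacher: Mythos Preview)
Your proof is correct and is exactly the unwinding the paper has in mind: its own proof reads simply ``It follows directly from the definitions.'' You have spelled out those definitions carefully, and every step (Iwasawa reduction, pairing against $X^{*}(B)_F$, collapse of $|\cdot|_{\AAA}$ to $|\cdot|_\infty$, and $|\kappa_\infty|=q^{[\kappa_\infty:\mathbb{F}_q]}$) is precisely what is needed.
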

\begin{proof}
It follows directly from the definitions.
\end{proof}

Let $A$ be the maximal $F$-split subtorus of $T$ and $A'$ be the maximal $F_\infty$-split subtorus of $T_{F_\infty}$. Clearly $A_{F_\infty}\subseteq A'$ and $A'$ is in fact defined over the residual field $\kappa_\infty$ of $F_\infty$.
As $G$ is quasi-split over $F$, equivalently the centralizer of $A$ in $G$ is the torus $T$, a non-zero root $\alpha$ for the adjoint acton of ${A'}$ on $\ggg_{F_\infty}$ is non-zero after restriction to $A$. 
We thus have a surjective map of the root systems \begin{equation}\Phi(G,A')_{F_\infty} \longrightarrow \Phi(G, A)_{F_\infty}\cong \Phi(G,A)_F.\end{equation}

\begin{theorem}\label{proj}
 Let $\alpha'\in \Phi(G, {A'})$.
The restriction $\alpha'|_{A}$ is an element in $\Phi({G,A})$ and we have
\[ [{\alpha'}^{\vee}]=c_{\alpha'}(\alpha'|_{A})^{\vee}, \]
for some $1\geq c_{\alpha'}>0$. 
\end{theorem}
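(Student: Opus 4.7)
\emph{Proof plan.}
First I would check that $\alpha'|_A$ is a non-zero character of $A$. If it vanished, the $\alpha'$-weight space of $\ggg_{F_\infty}$ would lie in the zero weight space of $A$, i.e.\ in $\mathrm{Lie}(C_G(A))_{F_\infty}$; quasi-splitness of $G$ over $F$ gives $C_G(A) = T$, and $T \supseteq A'$ acts trivially on $\ttt_{F_\infty}$, contradicting $\alpha' \neq 0$. In particular $\alpha'|_A \in \Phi(G,A)$, and as a by-product no root of $T$ restricts to $0$ on $A$.

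For the main identity, pick an absolute root $\beta \in \Phi(G_{\bar F}, T_{\bar F})$ lifting $\alpha'$, so that automatically $\beta|_A = \alpha := \alpha'|_A$. Write $\Gamma = \mathrm{Gal}(\bar F/F)$ and $\Gamma_\infty = \mathrm{Gal}(\bar F_\infty/F_\infty)$, embedded in $\Gamma$ via a choice of place above $\infty$. My first step is to make $\pr_\infty$ explicit. Under the canonical identifications (valid after $\otimes \mathbb{R}$) between invariants and coinvariants for finite group actions, one has $\ago_B^{*} \cong X^*(T)^\Gamma \otimes \mathbb{R}$ and $\ago_{B,\infty}^{*} \cong X^*(T)^{\Gamma_\infty} \otimes \mathbb{R}$, with the inclusion $\ago_B^* \hookrightarrow \ago_{B,\infty}^*$ being the tautological one. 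Dualizing yields $\ago_B \cong X_*(A)_\mathbb{R}$ and $\ago_{B,\infty} \cong X_*(A')_\mathbb{R}$, and a direct calculation identifies $\pr_\infty$ on $X_*(A')_\mathbb{R} \subseteq X_*(T)_\mathbb{R}$ with the normalized Galois average
\[
\pr_\infty(\mu) \;=\; \frac{1}{[\Gamma:\Gamma_\infty]} \sum_{\gamma \in \Gamma/\Gamma_\infty} \gamma \mu.
\]

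Next I would invoke the Borel--Tits description of relative coroots: there exist positive constants $\mu_{\alpha'}, \mu_\alpha \in \{1,2\}$, determined by the type of the corresponding Galois orbit ($R_1$, mutually orthogonal, giving $\mu = 1$, versus $R_2$, containing a pair summing to an absolute root, giving $\mu = 2$), such that
\[
(\alpha')^{\vee} \;=\; \mu_{\alpha'}\!\!\sum_{\beta'' \in \Gamma_\infty \beta}\!(\beta'')^\vee, \qquad (\alpha'|_A)^{\vee} \;=\; \mu_\alpha\!\!\sum_{\beta'' \in \Gamma \beta}\!(\beta'')^\vee,
\]
both normalized by the coroot conditions $\langle \alpha',(\alpha')^\vee\rangle = \langle \alpha,(\alpha'|_A)^\vee\rangle = 2$. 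Applying the averaging formula for $\pr_\infty$ to the first expression, a straightforward orbit--stabilizer calculation (using $|\mathrm{Stab}_\Gamma(\beta)|/|\mathrm{Stab}_{\Gamma_\infty}(\beta)| = [\Gamma:\Gamma_\infty]\cdot|\Gamma_\infty\beta|/|\Gamma\beta|$) shows that the double sum collapses to a multiple of the full $\Gamma$-orbit sum, yielding
\[
\pr_\infty\bigl((\alpha')^\vee\bigr) \;=\; \frac{\mu_{\alpha'}}{\mu_\alpha}\cdot\frac{|\Gamma_\infty\beta|}{|\Gamma\beta|}\,(\alpha'|_A)^\vee.
\]
This establishes the proportionality with $c_{\alpha'} = (\mu_{\alpha'}/\mu_\alpha)\cdot(|\Gamma_\infty\beta|/|\Gamma\beta|)$, manifestly positive.

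The main obstacle is the upper bound $c_{\alpha'} \leq 1$. One factor, $|\Gamma_\infty\beta|/|\Gamma\beta|$, is at most $1$ because $\Gamma_\infty\beta \subseteq \Gamma\beta$. For the other factor the key combinatorial observation is that any pair $\beta_1\neq\beta_2$ in $\Gamma_\infty\beta$ with $\beta_1+\beta_2\in\Phi$ already lies in $\Gamma\beta$; hence $\mu_{\alpha'} = 2$ forces $\mu_\alpha = 2$, and in all cases $\mu_{\alpha'}/\mu_\alpha \in \{1/2, 1\}$. Combining these gives $c_{\alpha'}\leq 1$. What demands genuine care here is the correct bookkeeping of the Borel--Tits normalization in the non-reduced configurations, where both $\alpha'$ and $2\alpha'$ (resp.\ $\alpha$ and $2\alpha$) lie in the relative root system and one must ensure that the coroot being computed matches the chosen orbit; the choice of lift $\beta$ pins this down unambiguously.
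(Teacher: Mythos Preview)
Your argument is essentially correct but takes a genuinely different route from the paper. The paper fixes a positive-definite inner product $(\,,\,)_\infty$ on $\ago_{B,\infty}^*$ invariant under both the Weyl group and the finite Galois group $\Gamma_d = \mathrm{Gal}(\kappa_\infty|\mathbb{F}_q)$, observes that the restriction map $\pr_\infty^*:\ago_{B,\infty}^*\to\ago_B^*$ is the $\Gamma_d$-averaging map and hence an \emph{orthogonal} projection, and then reads off directly from $\alpha'^\vee = 2\alpha'/(\alpha',\alpha')_\infty$ that
\[
c_{\alpha'} \;=\; \frac{(\alpha'|_A,\alpha'|_A)_\infty}{(\alpha',\alpha')_\infty}.
\]
Both $0<c_{\alpha'}$ and $c_{\alpha'}\le 1$ are then immediate: the former because $\alpha'|_A\neq 0$, the latter because orthogonal projections do not increase length. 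No case analysis, no Borel--Tits structure theory, no lifting to absolute roots.

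Your approach instead lifts to an absolute root $\beta$, invokes the explicit Borel--Tits description of relative coroots as $\mu\cdot\sum_{\gamma\beta}(\gamma\beta)^\vee$ with $\mu\in\{1,2\}$, and then checks the bound $c_{\alpha'}\le 1$ by a separate combinatorial argument on the orbit types. This is correct and has the merit of producing an explicit arithmetic formula for $c_{\alpha'}$, but it turns what the paper handles in one line into the ``main obstacle''. Two minor points to clean up: you write $\Gamma=\mathrm{Gal}(\bar F/F)$ and $\Gamma_\infty=\mathrm{Gal}(\bar F_\infty/F_\infty)$ and then form $[\Gamma:\Gamma_\infty]$ and $\Gamma/\Gamma_\infty$, which only makes literal sense after passing to the finite quotients through which the action on $X^*(T)$ factors (here $G$ is constant, so this is $\mathrm{Gal}(\bar{\mathbb{F}}_q/\mathbb{F}_q)$ modulo the subgroup fixing $\kappa_\infty$, of index $d$); and the Borel--Tits coroot formula you quote deserves a precise reference, since its exact normalization in the non-reduced case is what your bound hinges on.
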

\begin{proof}
There is no harm to choose $\bar{\kappa}_\infty$ to be $\bar{\mathbb{F}}_q$ by fixing an embedding $\kappa_\infty \hookrightarrow \bar{\mathbb{F}}_q$. 
Suppose $\kappa_\infty\cong \mathbb{F}_{q^d}$.
Let $\Gamma_d=\Gal(\mathbb{F}_{q^d}|\mathbb{F}_q) $. It acts on $X^{*}(T)_{F_\infty}\cong X^{*}(T)_{\mathbb{F}_{q^d}}$. 
Note that $\Gamma_d$ preserves $\Phi'=\Phi(G,A')_{F_\infty}$ and is compatible with the natural action of $W(\Phi')$, the Weyl group of $\Phi'$, i.e. 
     \( {}^{\sigma}(w\chi')={{}^{\sigma}w}   \leftidx{_{}^{\sigma}}{\chi'}, \)  
for any  $\sigma\in \Gamma_d$, any $ w\in W(\Phi')$, and any $ \chi'\in X^{*}(A')_{F_\infty} $. Thus $\ago_{B, \infty}^{*}$ admits a positive definite symmetric bilinear $(,)_{\infty}$ which is invariant under the Weyl group $W(\Phi')$ and the Galois group $\Gamma_d$. 

By definition,  for any $\chi\in \ago_B^{*}$, one has \(  \langle \chi, [\alpha'^{\vee}]\rangle=\langle \chi, \alpha'^{\vee}\rangle_{\infty},\) hence \begin{equation}\label{ip}\langle \chi, [\alpha'^{\vee}]\rangle=\frac{2(\alpha', \chi)_\infty}{(\alpha',\alpha')_\infty}. \end{equation}

Since $A_{F_\infty}$ is a sub-torus of $A'$, we have a quotient map \[ X^{*}({A'})_{F_\infty}\longrightarrow X^{*}(A)_{F_\infty}\cong X^{*}(A)_{F}. \] which defines a projection via the inclusions  $X^{*}(T)_F \subseteq X^{*}(A)_F$ and $X^{*}(T)_{F_\infty}\subseteq X^{*}(A')_{F_\infty}$:
\[ \pr_\infty^{*}: \ago_{B,\infty}^{*}\longrightarrow  \ago_B^{*}. \]
We leave it to the reader to verify that for any $\chi'\in X^{*}(A')_{F_\infty}$, we have  \[ \pr_\infty^{*}(\chi')= \chi'|_{A}= \frac{1}{d}\sum_{\sigma\in \Gamma_d} {}^{\sigma}\chi'. \]
Hence $\pr_\infty^{*}$ is an orthogonal projection and for any $\chi\in \ago_B^{*}$, we have \[ (\alpha', \chi)_\infty=(\pr_\infty^{*}(\alpha'),\chi)_\infty=(\alpha'|_A, \chi)_\infty.  \]
We may rewrite (\ref{ip}) as \[ \langle \chi, \frac{1}{c_{\alpha'}}[\alpha'^{\vee}]\rangle = \frac{2(\alpha'|_{A}, \chi)_\infty}{(\alpha'|_A,\alpha'|_A)_\infty}, \]
with $c_{\alpha'}=\frac{(\alpha'|_A,\alpha'|_A)_\infty}{(\alpha', \alpha')_\infty}$, which implies the assertion. 
\end{proof}

\subsection{Complementary polyhedra}
The definition of complementary polyhedra is introduced by Behrend in \cite{Behrend} and by Arthur in different languages (see the remark below). 
\begin{definition}
Let $(X_s)_{s\in W}$ be a family of points in  $\ago_B$ parametrized by elements of the Weyl group of $G$.  
It's called a complementary polyhedron (for the root system $\Phi(G,A)$) if for any $s=s_\alpha t$, where $s_\alpha$ is the symmetry with respect to the simple root $\alpha\in \Delta_B$, one has 
\[ X_{t}-X_{s}= b_{\gamma}(s,t)\gamma^{\vee} , \]
with $b_{\gamma}(s,t)\geq 0$ and $\gamma= s^{-1}\alpha=-t^{-1}\alpha$. 
\end{definition}

\begin{remark}\label{Q}
1. If $(X_s)_{s\in W}$ is a complementary polyhedron in $\ago_B$ for the root system $\Phi(G,A)$, then for any parabolic subgroup $Q\in \mathcal{P}(T)$, $(X_s)_{s\in W^Q}$ is a complementary polyhedron in $\ago_B\cong \ago_{B\cap M_Q}$ for the root system $\Phi(M_Q, A)$. \\
2.  Arthur calls $(X_s)_{s\in W}$ a regular orthogonal family if the $b_\gamma(s,t)$ above are strictly negative (compare \cite[Section I.6]{LabWal}).  
\end{remark}

Recall that $W=W^{(G,T)}(\mathbb{F}_q)$, and $W_{\infty}\cong W^{(G,T)}(\kappa_\infty)$. Thus $W$ is identified with a subgroup of $W_\infty$.

\begin{prop}\label{projcp}
Let $(X_s)_{s\in W_{\infty}}$ be a complementary polyhedron in $\ago_{B,\infty}$ for the root system $\Phi(G, A')$,  then $([X_s])_{s\in W}$ is a complementary polyhedron in $\ago_B$  for $\Phi(G, A)$. 
\end{prop}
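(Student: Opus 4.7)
The plan is to lift the relation $s=s_\alpha t$ from $W$ to a gallery of simple reflections in $W_\infty$, apply the complementary polyhedron hypothesis on $(X_u)_{u\in W_\infty}$ along the gallery, and then project down to $\ago_B$ via $\pr_\infty$, invoking Theorem~\ref{proj} to translate coroots of $\Phi(G,A')$ into coroots of $\Phi(G,A)$. Concretely, I fix a reduced expression $s_\alpha=s_{\alpha'_{i_1}}\cdots s_{\alpha'_{i_k}}$ of $s_\alpha\in W\subset W_\infty$ in terms of simple reflections of $W_\infty$, set $t_0:=t$ and $t_j:=s_{\alpha'_{i_j}}t_{j-1}$ so that $t_k=s$, and apply the hypothesis to each consecutive pair to obtain
\[ X_{t_{j-1}}-X_{t_j}=b_j\gamma_j^\vee, \qquad b_j\geq 0,\quad \gamma_j=-t^{-1}\beta'_j, \]
where $\beta'_j:=s_{\alpha'_{i_1}}\cdots s_{\alpha'_{i_{j-1}}}\alpha'_{i_j}$ is the $j$-th inversion of $s_\alpha$, i.e.\ a positive root of $\Phi(G,A')$ that $s_\alpha$ sends to a negative one. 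Telescoping, applying $\pr_\infty$, and invoking Theorem~\ref{proj} then produces
\[ [X_t]-[X_s]=\sum_{j=1}^k b_j c_{\gamma_j}(\gamma_j|_A)^\vee, \qquad c_{\gamma_j}>0. \]

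The heart of the argument is to show that each $\gamma_j|_A$ is a positive multiple of $\gamma=-t^{-1}\alpha$. For this I will use two ingredients already available: the $W$-equivariance of the orthogonal projection $\pr_\infty^*$, manifest from the averaging formula $\pr_\infty^*=\frac{1}{d}\sum_{\sigma\in\Gamma_d}\sigma$ established inside the proof of Theorem~\ref{proj}, and the fact that for the quasi-split $G$ considered here no root of $\Phi(G,A')$ restricts trivially to $A$. Applied to the inversion $\beta'_j$ these imply that $\beta'_j|_A$ is a positive root of $\Phi(G,A)$ with $s_\alpha(\beta'_j|_A)=(s_\alpha\beta'_j)|_A$ a negative one, hence $\beta'_j|_A\in\mathbb{R}_{>0}\alpha$; write $\beta'_j|_A=\lambda_j\alpha$ with $\lambda_j>0$. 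Applying $W$-equivariance once more, and using $t\in W$, yields $\gamma_j|_A=-t^{-1}(\lambda_j\alpha)=\lambda_j\gamma$, and the scaling relation $(\lambda_j\gamma)^\vee=\lambda_j^{-1}\gamma^\vee$, valid since $\lambda_j\gamma\in\Phi(G,A)$, turns the displayed sum into
\[ [X_t]-[X_s]=\Bigl(\sum_{j=1}^k\frac{b_j c_{\gamma_j}}{\lambda_j}\Bigr)\gamma^\vee, \]
a non-negative multiple of $\gamma^\vee$, as required by the complementary polyhedron condition.

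The main obstacle is precisely the identification $\beta'_j|_A\in\mathbb{R}_{>0}\alpha$ for every inversion of $s_\alpha$: a generic reduced expression in $W_\infty$ could in principle cross hyperplanes in many directions, and it is the combination of the quasi-split structure on $G$ with the $W$-equivariance of $\pr_\infty^*$ that pins all inversions of $s_\alpha$ into the single $\alpha$-ray of $\ago_B^*$. Once this identification is secured, the remainder of the proof is a purely mechanical telescoping together with the coroot scaling formula.
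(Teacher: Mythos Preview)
Your proof is correct and follows essentially the same approach as the paper's. The paper telescopes along a reduced decomposition of $s_\alpha$ in $W_\infty$ by citing \cite[1.5.1]{LabWal} (which packages exactly the gallery argument you spell out), then observes that the roots $\beta'$ appearing satisfy $t\beta'>0$, $s\beta'<0$, hence $\beta'|_A=t^{-1}\alpha$, and finishes with Theorem~\ref{proj}; your identification of the inversions $\beta'_j$ of $s_\alpha$ and the use of $W$-equivariance of $\pr_\infty^*$ amount to the same computation, just unpacked in more detail.
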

\begin{proof}
Let $\alpha\in \Delta_B$ be a simple root, and $s, t\in W$ such that $s=s_\alpha t$. 
Then (by \cite[1.5.1]{LabWal}) there are non-negative numbers $b_{\beta}'(s,t)$ depending on the choice of a reduced decomposition of $s_{\alpha}$ into simple symetries in $W_\infty$ such that
\[ X_{s}-X_{t}=\sum_{\{\beta'\in \Phi(G,{A'}) |  t\beta'>0, s \beta'<0 \}} b_{\beta'}(s,t)    \beta'^{\vee}.  \]

Note that $\beta'>0$ if and only if $\beta'|_{A}>0$, and the only root $\beta\in \Phi(G,A)$ such that $t\beta>0$  but $s\beta<0$ is $t^{-1}\alpha$. Hence those $\beta'$ which show up in the sum above are those which satisfy 
$\beta'|_A=t^{-1}\alpha$. By Theorem \ref{proj}, 
\[ [X_{s}]-[X_{t}]= b_{t^{-1}\alpha}(s,t)   (t^{-1}\alpha)^{\vee},  \]
where \begin{equation}\label{rsta} b_{t^{-1}\alpha}(s,t)=\sum_{ \beta'|_A=t^{-1}\alpha}  c_{\beta'}b_{\beta'}(s,t). \end{equation}  
\end{proof}
\begin{definition}\label{admissible} 
We say that a pair of vectors $(\xi, X)\in \ago_{B, \infty}\times \ago_B$ is admissible if 
\( d(X)\geq 0, \) and 
  \[ - \frac{1}{[\kappa_\infty:\mathbb{F}_q]} d(X)  \leq  \langle\alpha,  \xi  \rangle_{\infty}  \leq \frac{1}{[\kappa_\infty:\mathbb{F}_q]} d(X) + {[\kappa_\infty: \mathbb{F}_q]}, \quad \forall \alpha\in \Phi(G_{F_\infty}, A')^{r}_{+}, \]
where $d(X):=\min_{\alpha\in \Delta_B}\alpha(X)$, and $\Phi(G_{F_\infty}, A')^{r}_{+}$ is the set of positive roots in the reduced root system. 
  \end{definition}

For any $x\in  G(\mathbb{A})$. We have an Iwasawa decomposition: $x = pk$, with $p\in B(\AAA)$  and $k\in G(\mathcal{O})$. The element  $k$ is unique up to left translation by $B(\AAA)\cap G(\mathcal{O}) = B(\mathcal{O})$, hence determines an element in \[ \mathcal{I} \backslash G(\mathcal{O}_{\infty})/ \mathcal{I}  \cong B({\kappa}_\infty)\backslash G({\kappa}_\infty)/B({\kappa}_\infty)\cong W_{\infty}, \] where $\mathcal{I}$ is the Iwahori subgroup of $G(F_\infty)$ with respect to $B$, i.e. group of elements in $G(\ooo_\infty)$ whose images in $G(\kappa_\infty)$ are contained in $B(\kappa_\infty)$. 
We denote this Weyl element by $s_{x}$.

We have the following proposition. In fact, for the proofs of our main theorems we only need it in the case when $d(X)\gg 0$ depending on $\xi$, which is a direct consequence of \cite[Lemme 3.3.2]{LabWal}. However, it's interesting in itself to have an explicit bound since there might be some further applications that require $X=0$.   
\begin{theorem}\label{complementary} 
Let $(\xi, X)\in \ago_{B, \infty}\times \ago_B$ be admissible. 
For any $h\in G(\AAA)$, we define for each $s\in W$ a vector
\[ X_{s}=s^{-1}(H_{B}(w_s h) + [s_{w_s h}\xi]  -X)\in \ago_{B}. \]
Recall that $w_s$ is a fixed representative  in $N_G(T)(\mathbb{F}_q)$ of $s\in W$. 
 Then the family $(X_{s})_{s\in W}$ is a complementary polyhedron  in $\ago_{B}$.
\end{theorem}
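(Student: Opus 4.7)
The plan is to decompose $X_s$ into families each of which is a complementary polyhedron (CP) in $\ago_B$ for $\Phi(G,A)$. Since the defining condition $X_t-X_s = b_\gamma(s,t)\gamma^\vee$ with $b_\gamma\geq 0$ is preserved under non-negative linear combinations, it suffices to treat each piece separately. Splitting $H_B(w_s h) = \sum_v[\kappa_v:\mathbb{F}_q][H_{B,v}(w_s h_v)]$ by places (the general form of Proposition \ref{4.1} holds verbatim at each $v$) and using that $s_{w_s h}$ depends only on the $\infty$-component, I write $X_s = X_s^{\rm f} + X_s^{\infty}$ with $X_s^{\rm f} := s^{-1}\sum_{v\ne\infty}[\kappa_v:\mathbb{F}_q][H_{B,v}(w_s h_v)]$ and $X_s^\infty := s^{-1}\bigl([\kappa_\infty:\mathbb{F}_q][H_{B,\infty}(w_s h_\infty)] + [s_{w_s h}\xi] - X\bigr)$.

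For $X^{\rm f}$, at each finite place $v\ne\infty$ classical reduction theory (Behrend's polyhedron \cite{Behrend} in its local form, equivalently Iwahori--Bruhat at $v$) shows that $(\sigma^{-1}H_{B,v}(w_\sigma h_v))_{\sigma\in W_v}$ is a CP in the corresponding local space for the local root system; the analog of Proposition \ref{projcp} at $v$, restricted to $W\subseteq W_v$, produces a CP in $\ago_B$ for $\Phi(G, A)$, and summing over $v$ with positive weights $[\kappa_v:\mathbb{F}_q]$ preserves the CP property.

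The core of the argument is to show $X^{\infty}$ is a CP. I lift it to $\ago_{B,\infty}$ via the natural $\Gamma_d$-invariant inclusion $\iota_\infty\colon\ago_B\hookrightarrow\ago_{B,\infty}$ dual to $\pr_\infty$ (coming from the orthogonal splitting in the proof of Theorem \ref{proj}), setting for $\sigma\in W_\infty$
\[ Y_\sigma := \sigma^{-1}\bigl([\kappa_\infty:\mathbb{F}_q]\,H_{B,\infty}(w_\sigma h_\infty) + s_{w_\sigma h_\infty}\xi - \iota_\infty(X)\bigr). \]
By $W$-equivariance of $\pr_\infty$, Proposition \ref{4.1}, and $[\iota_\infty(X)]=X$, one checks $[Y_s]=X_s^\infty$ for $s\in W$, so by Proposition \ref{projcp} it suffices to show that $(Y_\sigma)_{\sigma\in W_\infty}$ is a CP in $\ago_{B,\infty}$ for $\Phi(G, A')$. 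I would prove this via the Iwahori--Bruhat decomposition $G(F_\infty)=\bigsqcup_{\tilde w\in\tilde W_\infty}\mathcal I\tilde w\mathcal I$ over the extended affine Weyl group $\tilde W_\infty=X_*(A')\rtimes W_\infty$: the element $w_\sigma h_\infty$ lies in a cell whose translation part matches $H_{B,\infty}(w_\sigma h_\infty)$ (up to Iwasawa) and whose finite part is $s_{w_\sigma h_\infty}$. For a simple reflection $s_{\alpha'}$ with $\alpha'\in\Delta_{B,\infty}$ and $\sigma=s_{\alpha'}\tau$, the Tits--Matsumoto rules produce two cases: if lengths add, $s_{w_\sigma h_\infty}=s_{\alpha'}\cdot s_{w_\tau h_\infty}$ and the $\xi$-contribution to $Y_\tau-Y_\sigma$ vanishes; if a length collapse occurs, $s_{w_\sigma h_\infty}=s_{w_\tau h_\infty}$ and the $\xi$-contribution equals $-\langle s_{w_\tau h_\infty}^{-1}\alpha',\xi\rangle_\infty\,(\sigma^{-1}\alpha')^\vee$, whose modulus is controlled by admissibility. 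In both cases the $-\iota_\infty(X)$-contribution is $\langle\alpha'|_A, X\rangle\,(\sigma^{-1}\alpha')^\vee$, which is at least $d(X)\,(\sigma^{-1}\alpha')^\vee$ because $\alpha'|_A$ is a positive root for $\Phi(G, A)$ of height $\geq 1$. In the collapse case, the $H_{B,\infty}$-term additionally provides a positive integer multiple of $[\kappa_\infty:\mathbb{F}_q]\,(\sigma^{-1}\alpha')^\vee$ (a length collapse costs a unit affine translation). The admissibility inequalities in Definition \ref{admissible} are calibrated exactly so that the $\xi$-deficit, the $d(X)$ surplus from $-\iota_\infty(X)$, and the $[\kappa_\infty:\mathbb{F}_q]$ surplus from $H_{B,\infty}$ sum to a non-negative coefficient of $(\sigma^{-1}\alpha')^\vee$.

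The main obstacle is the combinatorial Iwahori--Bruhat analysis at $\infty$: precisely characterizing the length-add vs.\ collapse cases in terms of $h_\infty$, quantifying the $H_{B,\infty}$-jump in the collapse case, and verifying that the explicit constants $[\kappa_\infty:\mathbb{F}_q]$ and $d(X)/[\kappa_\infty:\mathbb{F}_q]$ in Definition \ref{admissible} are sharp enough to keep every $b_{\gamma'}(\sigma,\tau)\geq 0$ uniformly in $h_\infty$. This explicit bound, which covers $X=0$ as well, is what distinguishes the argument from \cite[Lemme 3.3.2]{LabWal} (which handles only the regime $d(X)\gg 0$).
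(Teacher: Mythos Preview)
Your approach is essentially the same as the paper's: both reduce to a local computation at each place, and at $\infty$ both work with adjacent pairs $\sigma=s_{\alpha'}\tau$ in $W_\infty$, computing the $H_{B,\infty}$-jump and the $\xi$-jump separately via Bruhat--Tits/Iwahori combinatorics, then checking that the admissibility inequalities make the total coefficient non-negative. The paper carries out exactly the explicit computations you sketch (its equations for the $H_B$-jump use the root-subgroup valuation $\varphi_\gamma$ rather than your affine-Weyl-group length language, but the content is the same).

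There is one minor packaging difference worth noting. You include $-\iota_\infty(X)$ in the lifted family $Y_\sigma$, so that $(Y_\sigma)_{\sigma\in W_\infty}$ is a genuine complementary polyhedron and Proposition~\ref{projcp} applies directly. The paper instead keeps $-X$ at the $W$-level: it shows that the $(H_B+\xi)$-part alone satisfies a \emph{relaxed} bound at the $W_\infty$-level (coefficient $\geq -d(X)/[\kappa_\infty:\mathbb{F}_q]$), and then combines this with the Galois-orbit size bound coming from the proof of Proposition~\ref{projcp} and with the $-X$ contribution at the $W$-level. Your packaging is arguably cleaner.

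One small caution on your case analysis: the dichotomy ``lengths add'' vs.\ ``length collapse'' is slightly coarser than what actually occurs. The paper finds an intermediate case (in your notation: $x\in N_{\gamma',0}$ but $s_{w_\tau k}^{-1}\alpha'<0$) where the $\xi$-contribution can be nonzero \emph{without} any compensating $H_{B,\infty}$-surplus; there only the $d(X)$ coming from $-\iota_\infty(X)$ secures non-negativity, via $d(X)\bigl(1-1/[\kappa_\infty:\mathbb{F}_q]\bigr)\geq 0$. Your framework handles this, but the case should be singled out when you write up the details.
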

\begin{proof}
The problem is local in nature. 
It suffices to prove that, for any place $v$ of $F$, any $h\in G(F_v)$, the family 
\( (s^{-1}H_{B}(w_s h))_{s\in W}  \)
is a complementary polyhedron in $\ago_{B}$ and 
for any $h \in G(F_\infty)$,  \[ \left(s^{-1}(H_{B}(w_sh)  + [s_{w_sh}\xi]  -  X ) \right)_{s\in W} \]
is a complementary polyhedron in $\ago_{B}$. Without loss of generality, we prove the second statement only. 

Recall that $A'$ is the maximal $F_\infty$-split subtorus of $T$. 
For any $\gamma\in \Phi(G, A')$, let $N_\gamma$ be the root subgroup of $G_{F_\infty}$ associated to $\gamma$. 
Fix a valuation $(\varphi_\gamma)_{\gamma\in \Phi(G,A')}$ of the root datum (cf. \cite[6.2.1]{BT1} and \cite[4.2]{BT2}) which is centered at the hyperspecial point corresponding to $G(\ooo_\infty)$,  and $\varphi_{\gamma}: N_{\gamma}(F_\infty)\rightarrow \mathbb{Z}\cup \{\infty\}$ is normalized to be surjective. 
Let 
\[N_{\gamma, n}=\varphi_\gamma^{-1}([n,\infty])\subseteq N_{\gamma}(F_\infty).\]

For any $s\in W_\infty$, we can choose a representative in $N_G(T)(\kappa_\infty)$.  
Let $B_1=w_s^{-1}Bw_s$ and $B_2=w_{t}^{-1}Bw_t$ for $s, t\in W_{\infty}$, be a pair of adjacent Borel subgroups defined over $F_\infty$  both containing $T$. We may assume that $s=s_\alpha t$ with $\alpha\in \Delta_{B,\infty}$, then $\Delta_{B_1,\infty}\cap (-\Delta_{B_2, \infty})=\{\gamma\}$, where $\gamma=s^{-1}(\alpha)=-t^{-1}\alpha$.  
We have 
\[ N_{B_1}=(N_{B_1}\cap N_{B_2}) N_{\gamma}. \]
By Iwasawa decomposition, we can write $h=auxk$ with $a\in T(F_\infty)$, $u\in (N_{B_1}\cap N_{B_2})(F_\infty)$, $x\in N_{\gamma}(F_\infty)$ and  $k\in G(\ooo_\infty)$. 
We claim that 
\begin{equation}\label{1i}t^{-1}H_{B}(w_th)-s^{-1}H_{B}(w_sh)=\begin{cases} 0, \quad \text{if } x\in N_{\gamma,0}; \\ -[\kappa_\infty:\mathbb{F}_q]\varphi_{\gamma}(x) \gamma^{\vee} ,\quad x\notin N_{\gamma,0};
 \end{cases}\end{equation} 
and that
 \begin{equation}
 \label{2i}t^{-1} s_{w_t h}\xi - s^{-1} s_{w_s h} \xi =\begin{cases}0, \quad \text{if  $x\in N_{\gamma,0}$ and $s_{w_tk}^{-1}\alpha>0$}; \\ 0 \text{ or }  - \langle s_{w_t k}^{-1} \alpha,  \xi\rangle_{\infty} \gamma^{\vee}, \quad \text{if  $x\in N_{\gamma, 0}$ and $ s_{w_tk}^{-1}\alpha<0 $}  ;\\
- \langle s_{w_t k}^{-1} \alpha, \xi\rangle_{\infty} \gamma^{\vee} , \quad \text{if  $x\notin N_{\gamma,0}$}.  \end{cases} \end{equation} 
 
These assertions suffice to prove the theorem. In fact, if $x\in N_{\gamma, 0}$, then the sum of (\ref{1i}) and (\ref{2i}) is either $0$ or of the form $c\gamma^{\vee}$ with $c\geq -\frac{1}{[\kappa_\infty: \mathbb{F}_q]} d(X)$. If $x\notin N_{\gamma, 0}$, then since \[ -[\kappa_\infty:\mathbb{F}_q]\varphi_{\gamma}(x)\geq  [\kappa_\infty:\mathbb{F}_q], \] and  by the definition of admissible pairs (Definition \ref{admissible}), if $-s_{w_t k}^{-1} \alpha$ is positive, we have 
\[ \langle -s_{w_t k}^{-1} \alpha, \xi\rangle_{\infty} \geq -\frac{1}{[\kappa_\infty: \mathbb{F}_q]} d(X).\] 
Otherwise $s_{w_t k}^{-1} \alpha$ is positive, we have \[-\langle s_{w_t k}^{-1} \alpha, \xi\rangle_{\infty} \geq -\frac{1}{[\kappa_\infty: \mathbb{F}_q]} d(X)- {[\kappa_\infty: \mathbb{F}_q]}. \]
The sum of (\ref{1i}) and (\ref{2i}) is always of the form $c\gamma^\vee$ with $c\geq -\frac{1}{[\kappa_\infty: \mathbb{F}_q]} d(X)$. Note that for each $\alpha_0\in \Delta_B$, there are at most $[\kappa_\infty, \mathbb{F}_q]$ roots $\alpha\in \Delta_{B,\infty}$ such that $\alpha|_A=\alpha_0$ (they form one orbit for a Galois action).  
The statement of the theorem is then a corollary of \eqref{rsta}, (\ref{1i}) and (\ref{2i}). 
  We prove (\ref{1i}) and (\ref{2i}) in the following.

For \eqref{1i}, a proof for a split reductive group is written down in \cite[8.16.8]{C-L1}. The proof is similar in our case.
For any $h\in G(F_\infty)$ we have \[ s^{-1}H_{B, \infty}(w_s h)=H_{w_s^{-1}Bw_s, \infty}(h). \]
We always have $H_{B_2,\infty}(h)=H_{B_2,\infty}(a)$ and \[t^{-1}   H_{B,\infty}(w_th)- s^{-1}    H_{B,\infty}(w_sh) =H_{B_2,\infty}(x)=s_{\gamma}^{-1}H_{B_1,\infty}(w_\gamma x).  \]

If $x\in N_{\gamma, 0}$, then clearly $H_{B_2,\infty}(x)=0$. Otherwise let $m=\varphi_{\gamma}(x)$ then
$m<0$.
Let \[M_{\gamma, m}=  M_{\gamma}\cap N_{-\gamma}(F_\infty)\varphi_\gamma^{-1}(m)N_{-\gamma}(F_\infty), \] 
where in our case $M_{\gamma}=w_\gamma T(F_\infty)$. 
We have (\cite[6.2.2. (3)]{BT1})
    \begin{equation}\label{cool}
    w_\gamma \varphi_{\gamma}^{-1}(m)  \subseteq w_{\gamma}N_{-\gamma, -m}  M_{\gamma, m} N_{-\gamma, -m}= N_{\gamma, -m} (w_{\gamma} M_{\gamma,m}) N_{-\gamma, -m}. \end{equation}
It follows that $H_{B_1, \infty}(w_\gamma x)=H_{B_1, \infty}(w_{\gamma}n)$ for some $n\in M_{\gamma,m}$.  
   As $M_{\gamma,m}\subseteq G^{{der}}(F_\infty)$, we deduce from \cite[6.2.10(ii)]{BT1} that $H_{B_1, \infty}(w_\gamma M_{\gamma, m})= m  \gamma^{\vee}$. The equality \eqref{1i} then follows from the Proposition \ref{4.1}.
   

   Now we prove \eqref{2i}. Note that we have \[ w_t (N_{B_2}\cap N_{B_1})w^{-1}_t\subset B, \text{ and }  w_s (N_{B_2}\cap N_{B_1})w^{-1}_s\subset B , \] hence
 \[ s_{w_t h}=s_{w_t xk}, \text{ and }  s_{w_s h}=s_{w_s xk}  . \] 
   Note that we always have 
   \[ \mathcal{I}w_\alpha \mathcal{I} w_t {k}\mathcal{I}\subseteq \mathcal{I}w_\alpha s_{w_t k} \mathcal{I}\cup \mathcal{I}s_{w_t k}\mathcal{I}.   \]

If $x\in N_{\gamma, 0}$, then \[w_txw_t^{-1}\in \mathcal{I}.\] 
We have \(s_{w_t xk}=s_{w_tk}. \)
If moreover, $s_{w_tk}^{-1}\alpha>0$, then $l(s_\alpha s_{w_tk})=l(s_{w_tk})+1$ (cf. \cite[1.3.1]{LabWal}) hence we have (cf. Théorème 2, $§2$, IV  of \cite{Bourbaki}) \[\mathcal{I}w_\alpha\mathcal{I}w_t k\mathcal{I}=\mathcal{I}w_{\alpha}s_{w_t k}\mathcal{I}.\]
Otherwise if  $s_{w_tk}^{-1}\alpha<0$, as $\mathcal{I}w_\alpha \mathcal{I} w_t {k}\mathcal{I}\subseteq \mathcal{I}w_\alpha s_{w_t k} \mathcal{I}\cup \mathcal{I}s_{w_t k}\mathcal{I}$, we deduce by definition that:
\[ s_{w_s xk}=\begin{cases} s_\alpha s_{w_tk}, \quad s_{w_tk}^{-1}\alpha>0; 
  \\  s_\alpha s_{w_tk} \text{ or } s_{w_tk},   \quad s_{w_tk}^{-1}\alpha<0. 
\end{cases} \]

If $x\notin N_{\gamma, 0}$, then there is an $m<0$ such that $x\in N_{\gamma, m}$. 
As $N_{-\alpha, -m}\subseteq \mathcal{I}$ whenever $m<0$, by \eqref{cool}, we have $ s_{w_\alpha w_txh}=s_{w_tk}$ and $s_{w_txk}=s_{w_tk}$. 

The equality  (\ref{2i}) follows from these calculations. 
\end{proof}

\section{Statements of the main theorems}\label{maintrace}

\subsection{}
Let $f\in \mathcal{C}_c^{\infty}(G(\mathbb{A}))$ be a complex smooth function with compact support over $G(\AAA)$. Let $Q$ be a standard parabolic subgroup  of $G$ defined over $F$, the kernel function of $f$ acting on  $L^{2}(N_Q(\AAA) M_Q(F)\backslash G(\mathbb{A})/\Xi_G)$ is given by: 
\begin{equation}k_Q(x,y)=\sum_{a\in \Xi_G} \sum_{\gamma\in M_Q(F)} \int_{N_Q(\AAA)} f(a  y^{-1} \gamma  u x)  \d u. \end{equation}
For any $x\in N_Q(\AAA) M_Q(F)\backslash G(\mathbb{A})/\Xi_G$, we will note  \begin{equation}k_Q(x)=k_Q(x, x) . \end{equation}

In the case of Lie algebras, a similar definition is given by analogy. Let $f\in \mathcal{C}_c^{\infty}(\ggg(\mathbb{A}))$ and  $Q$ be a standard parabolic subgroup, we define $\kkk_Q(x)$ for any $x\in G(\AAA)$ by:
 \begin{equation}\kkk_Q(x)= \sum_{\Gamma\in {\mmm_Q}(F)} \int_{\nnn_Q(\AAA)} f( \mathrm{Ad}(x^{-1})(\Gamma+ U)) \d U. \end{equation}


We still fix a place $\infty$ of $F$. 
For any vector $\xi \in \ago_{B, \infty}$ and $X\in \ago_B$, 
we define a variant of Arthur's truncated kernel  by:
\begin{equation}  {k}^{\xi, X}(x) =\sum_{Q\in \mathcal{P}(B)}(-1)^{\dim\ago_Q^G}\sum_{\delta\in Q(F)\backslash G(F)}  \htau_{Q}\left(H_B(\delta x) +  [s_{\delta x}\xi] -X  \right) {k}_{Q}(\delta x), \end{equation}
for any $x\in G(\AAA)$. Recall that $\mathcal{P}(B)$ is the set of standard parabolic subgroups and $[s_{\delta x}\xi]$ is defined in the previous section ($[\cdot]$ is defined by the map \eqref{41} and $s_{x}$ is defined before the Theorem \ref{complementary}).  
In the Lie algebra case, we define similarly:
\begin{equation}  \kkk^{\xi, X}(x)=\sum_{Q\in \mathcal{P}(B)}(-1)^{\dim\ago_Q^G}\sum_{\delta\in Q(F)\backslash G(F)}  \htau_{Q}\left(H_B(\delta x) +  [s_{\delta x}\xi]  -X \right) \kkk_{Q}(\delta x), \end{equation}
for any $x\in G(\AAA)$ and $(\xi, X)\in\ago_{B,\infty}\times \ago_B$.  
One can see that, for every $\delta'\in Q(F)$, the elements $s_{\delta x}$ and $s_{\delta'\delta x}$ represent the same coset in $W^Q_\infty\backslash W_\infty$, hence $s_{\delta x}\xi $ and $s_{\delta'\delta x}\xi $ have the same projection in $\ago_Q$. Besides, for each $x\in G(\AAA)$ and each function $f$ with compact support, the definition only involves a finite sum (\cite[3.7.1]{LabWal}).
This shows that the definition makes sense. 
 
 As there is a strong analogy between two cases, we will only present one case if the other case follows by the same arguments. 

\subsection{}
We define two elements $\gamma$ and $\gamma'$ in $G(F)$ (resp. in $\ggg(F)$) to be equivalent if either both of them admit Jordan-Chevalley decomposition and their semi-simple parts $\gamma_s$ and $\gamma'_{s}$ are $G(F)$-conjugate, or neither of them admit Jordan-Chevalley decomposition. So an equivalent class is a union of $G(F)$-conjugacy classes and there is one single equivalent class consisting of elements which do not admit Jordan-Chevalley decomposition. 
Let \[\mathcal{E} = \mathcal{E}(G(F)) \text{ or  $\mathcal{E}(\ggg(F))$}\] be the set of equivalent classes for either the group case or the Lie algebra case if it doesn't cause a confusion.

Let $o\in \mathcal{E}$ be an equivalence class. For any standard parabolic subgroup $Q$ of $G$ and $x\in G(\AAA)$, let's define 
\begin{equation}\label{kQo}   k_{Q, o}(x)= \sum_{a\in \Xi_G} \sum_{\gamma\in M_Q(F)\cap {o} } \int_{N_Q(\AAA)} f(ax^{-1} \gamma n x)  \d u, \end{equation}
and 
\begin{equation}  k^{\xi, X}_{o}(x)=\sum_{Q\in \mathcal{P}(B)}(-1)^{\dim\ago_Q^G}\sum_{\delta\in Q(F)\backslash G(F)}  \htau_{Q}\left(H_B(\delta x)+  [s_{\delta x}\xi]  -X \right)k_{Q, o}(\delta x). \end{equation}
It follows from definitions that \begin{equation}\label{geomdec} k^{\xi, X}(x)=\sum_{o\in \mathcal{E} } k^{\xi, X}_{o}(x).\end{equation}
Again, for each $x\in G(\AAA)$ and each compactly supported function $f\in C_c^{\infty}(G(\AAA))$, the sum in (\ref{geomdec}) is in fact a finite sum. 
The same construction applies for the Lie algebra $\ggg$, so we can define $\kkk^{\xi, X}_{o}$.

Our definitions differ from that of Arthur 
only by the presence of $\xi$. The following theorem generalizes \cite[Proposition 11, p.227]{Laf} and \cite[Théorème 6.1.1. (2),(3)]{Chau} whose proof will be given later in Section \ref{proofm}.
\begin{theorem}\label{Main}
Assume the hypothesis $(\ast)$ in the Section \ref{chara} on the characteristic is satisfied. 
For any equivalence class $o\in \mathcal{E}$ and any pair of vectors $(\xi, X)\in \ago_{B,\infty}\times \ago_{B}$, 
the function $x\mapsto k^{\xi, X}_{o}(x)$  (resp. $x\mapsto \kkk^{X}_{{\xi}, o}(x)$) has compact support over $G(F)\backslash G(\mathbb{A})/Z_{G}(\AAA)$.

\sloppy
Moreover, when $(\xi,X)\in \ago_{B,\infty}\times\ago_B$ is admissible,
there is a function $F^{\xi, X}(\cdot)$ over $G(F)\backslash G(\AAA)/Z_G(\AAA)$ which is the characteristic function of a set which is compact in $G(F)\backslash G(\AAA)/Z_G(\AAA)$. For every function $f\in \mathcal{C}_c^\infty(G(\AAA))$ (resp. $f\in \mathcal{C}_c^\infty(\ggg(\AAA))$), if 
 $d(X):=\min_{\alpha\in \Delta_B}\alpha(X)$ is large enough depending on $\xi$ and $f$, we have for any $x\in G(\AAA)$
  \begin{align*}k^{\xi, X}_{o}(x) &=F^{\xi, X}(x) k_{G,o}(x) \\
    & =F^{\xi, X}(x)   \sum_{a\in \Xi_G} \sum_{\gamma \in o}f(a x^{-1}\gamma x). \end{align*}
  \[  \text{ (resp.      $\kkk^{\xi, X}_{o}(x) =F^{\xi, X}(x)\sum_{\Gamma \in o}f(\ad(x^{-1})(\Gamma)).$        )}    \]
\end{theorem}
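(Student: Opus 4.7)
The plan is to exploit the complementary polyhedron machinery of Theorem \ref{complementary} together with an adaptation of Lafforgue's argument for $GL_n$. For any admissible pair $(\xi,X)$, Theorem \ref{complementary} associates to each $x\in G(\AAA)$ the complementary polyhedron $(X_s)_{s\in W}$ in $\ago_B$ with $X_s = s^{-1}(H_B(w_s h)+[s_{w_s h}\xi]-X)$. The first step is to show that if $k^{\xi,X}_o(x)\neq 0$, then some summand $\hat\tau_Q(H_B(\delta x)+[s_{\delta x}\xi]-X)\, k_{Q,o}(\delta x)$ is nonzero, so that the corresponding vertex $X_s$ of the polyhedron lies in the dominant region cut out by $\hat\tau_Q$; the support condition on $f$ inside $k_{Q,o}$ simultaneously confines the Siegel-theoretic coordinates of $\delta x$. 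Combining these with the standard reduction theory over function fields (using the coroot estimates from Proposition \ref{projcp} and Theorem \ref{proj} to pass between the local and global settings) forces $x$ into a bounded subset of $G(F)\backslash G(\AAA)/Z_G(\AAA)$.

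Next, define
\[
F^{\xi,X}(x)=\sum_{P\in\mathcal{P}(B)}(-1)^{\dim\ago_P^G}\sum_{\delta\in P(F)\backslash G(F)}\hat\tau_P\bigl(H_B(\delta x)+[s_{\delta x}\xi]-X\bigr).
\]
Using Theorem \ref{complementary} together with the standard combinatorial identity for complementary polyhedra (a variant of Langlands' combinatorial lemma), one checks that $F^{\xi,X}$ takes values in $\{0,1\}$ and equals $1$ precisely on the ``$(\xi,X)$-semistable'' locus, i.e.\ when the top vertex of the polyhedron sits at $s=e$. By the first step applied to the constant function, this set is compact modulo $G(F)Z_G(\AAA)$.

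For the explicit formula when $d(X)\gg 0$, I would argue as follows. Fix a bound $R$ such that the support of $f$ controls any $\gamma$ appearing in $k_{Q,o}(\delta x)$ by $\|H_B(\gamma)\|\leq R$. Choose $d(X)$ large enough (depending on $R$ and $\|\xi\|_\infty$) so that on the region where $\hat\tau_Q(H_B(\delta x)+[s_{\delta x}\xi]-X)\neq 0$, the additional positivity $\tau_Q(\ldots)\neq 0$ is automatically forced. Under this condition, standard manipulations with the Iwasawa decomposition and the unipotent integration in the definition \eqref{kQo} allow one to replace the sum over $\gamma\in M_Q(F)\cap o$ and the integral over $N_Q(\AAA)$ by a sum over $\gamma\in o\cap G(F)$, collapsing $k_{Q,o}(\delta x)$ into a piece of $k_{G,o}(\delta x)$. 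The remaining $Q$-sum then matches the definition of $F^{\xi,X}(x)$, giving the desired identity $k^{\xi,X}_o(x)=F^{\xi,X}(x)\,k_{G,o}(x)$; the Lie algebra case is identical with $\Ad$ in place of conjugation.

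The exceptional class, consisting of those elements that do not admit a Jordan–Chevalley decomposition, is the one place where the hypothesis $(\ast)_G$ is essential: by Gille's Proposition \ref{Gille}, the unipotent parts appearing in any $M_Q(F)\cap o$ still lie in the unipotent radical of a parabolic of $M_Q$ defined over $F$, so the same support/truncation analysis goes through uniformly in $o$. The principal obstacle I anticipate is the ``interchange'' step (the combinatorial passage from $\hat\tau_Q$ to $\tau_Q$ and then back to $F^{\xi,X}$) in the presence of the extra shift by $[s_{\delta x}\xi]$: one must verify that the shift does not disrupt the key inequalities between simple roots and fundamental weights used in Arthur's/Lafforgue's cancellation argument. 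The admissibility condition in Definition \ref{admissible} was designed precisely to make these inequalities survive, and the explicit bounds in the proof of Theorem \ref{complementary} should suffice to control the error term.
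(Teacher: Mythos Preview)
Your compact support argument has a genuine gap. You write that if $k^{\xi,X}_o(x)\neq 0$ then \emph{some} summand is nonzero, and then try to bound $x$ from that single summand. But the individual terms $\hat\tau_Q(\ldots)k_{Q,o}(\delta x)$ do \emph{not} have compact support on $G(F)\backslash G(\AAA)/Z_G(\AAA)$: the kernel $k_{Q,o}$ is $A_{M_Q}(\AAA)$--invariant up to the center, and $\hat\tau_Q$ only bounds the $\ago_Q$--component from below. Compactness of the support of $k^{\xi,X}_o$ comes entirely from the alternating--sum cancellation, not from any single term. The paper handles this by introducing an auxiliary parameter $X'$ with $d(X')\gg 0$, inserting the partition of unity
\[
1=\sum_{P\subseteq Q}\sum_{\delta\in P(F)\backslash Q(F)}\tau_P^Q(\ldots)F^{P,\xi,X'}(\delta x)
\]
into the definition of $k^{\xi,X}_o$, and then rewriting the $Q$--sum as a sum of $\Gamma_P(H,X-X')$'s. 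Since $\Gamma_P(\cdot,X-X')$ has compact support in $\ago_P^G$ and $F^{P,\xi,X'}$ is compactly supported on $P(F)\backslash G(\AAA)/Z_G(\AAA)$ (by the Behrend canonical--refinement argument, Proposition \ref{compact}), every term in the new expression is compactly supported. This two--parameter trick is what you are missing; without it there is no way to get compactness for \emph{arbitrary} $(\xi,X)$.

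Two further specific issues. First, the claim that for $d(X)\gg 0$ the condition $\hat\tau_Q\neq 0$ automatically forces $\tau_Q\neq 0$ is false as stated; these are different cones and no amount of translating by $X$ makes one contain the other. What is actually needed is the lemma (Lemma \ref{invariance} in the paper): whenever $F^{P,\xi,X'}(x)\tau_P^Q(\ldots)=1$ with $d(X')$ large, one has $k_{P,o}(x)=k_{Q,o}(x)$. Second, you locate the use of hypothesis $(\ast)$ in Gille's theorem applied to ``support/truncation analysis'', but the real use is algebraic: one needs the compatibility
\[
o\cap M_Q(F)\cap P(F)=(o\cap M_P(F))\,N_P^Q(F),
\]
which is Corollary \ref{descent} and rests on Proposition \ref{unipotent} (and through it on Gille's result). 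This identity is exactly what allows you to pass from $k_{Q,o}$ to $k_{P,o}$ in the lemma above; your ``standard manipulations with Iwasawa decomposition'' do not substitute for it.
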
	
\begin{remark}
Unlike the above theorem which applies for each $k^{\xi, X}_o(x)$ that requires an additional hypothesis on the characteristic, 
we can prove by the same proof that $k^{\xi, X}(x)=\sum_{o\in \mathcal{E}}k^{\xi, X}_o(x)$ has compact support in $G(F)\backslash G(\AAA)/Z_G(\AAA)$ in any characteristic $p$.  \end{remark}

After this theorem,  the following definitions make sense: for $f\in \mathcal{C}_c^{\infty}(G(\AAA))$ and any $(\xi, X)\in \ago_{B,\infty}\times \ago_{B}$, we define the truncated trace by  
\begin{equation}J^{G, \xi, X}(f):= \int_{G(F)\backslash G(\AAA)/\Xi_G}  k^{\xi, X}(x)\d x;     \end{equation}
and for any $o\in \mathcal{E}$
\begin{equation}J^{G, \xi, X}_{o}(f):= \int_{G(F)\backslash G(\AAA)/\Xi_G}  k^{\xi, X}_{o}(x)\d x;     \end{equation}
Similarly we define $J^{\ggg, \xi, X}(f)$ and $J^{\ggg, \xi, X}_o(f)$ in the Lie algebra case. 
We will omit the superscript $G$ or $\ggg$ if it is clear from the context which case we're dealing with. We will also omit $X$ in the notation if it is set to be zero, which is also our main interests. We have the following corollary of the Theorem \ref{Main}. 
\begin{coro}\label{5.3}
In either the group case, or the Lie algebra case, we have the coarse geometric expansion of the truncated trace:  
\begin{equation}
J^{\xi}(f)=\sum_{o\in \mathcal{E}} J^{\xi}_o(f),
\end{equation}
where there are only finitely many non-zero terms in the sum. 
\end{coro}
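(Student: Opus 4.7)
The plan is to deduce the corollary from Theorem \ref{Main} together with the pointwise identity \eqref{geomdec}. First, Theorem \ref{Main} asserts that each $k^{\xi}_o$ has compact support on $G(F)\backslash G(\AAA)/Z_G(\AAA)$; since the natural projection $G(F)\backslash G(\AAA)/\Xi_G\rightarrow G(F)\backslash G(\AAA)/Z_G(\AAA)$ has compact fibres (the fibres being $Z_G(F)\backslash Z_G(\AAA)/\Xi_G$, compact by the definition of $\Xi_G$), the function $k^{\xi}_o$ is also compactly supported on $G(F)\backslash G(\AAA)/\Xi_G$, so $J^{\xi}_o(f)=\int k^{\xi}_o(x)\,dx$ is a finite integral, with an analogous statement on the Lie algebra side. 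The pointwise equality $k^{\xi}(x)=\sum_{o}k^{\xi}_o(x)$ of \eqref{geomdec} is a locally finite sum (only finitely many terms are non-zero at each $x$, as already remarked in the text surrounding that identity), so once we show that only finitely many $o$ contribute globally, interchanging sum and integral yields $J^{\xi}(f)=\sum_{o}J^{\xi}_o(f)$.

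The main point is therefore the finiteness statement. I would establish it by invoking the second half of Theorem \ref{Main}: choose an auxiliary parameter $X$ with $d(X)$ sufficiently large (depending on $\xi$ and $f$) so that the explicit formula $k^{\xi,X}_o(x)=F^{\xi,X}(x)\,k_{G,o}(x)$ holds, where $F^{\xi,X}$ is the characteristic function of a compact set $\Omega\subseteq G(F)\backslash G(\AAA)/Z_G(\AAA)$. A non-vanishing $J^{\xi,X}_o(f)$ then demands the existence of $x\in\Omega$, $a\in\Xi_G$ and $\gamma\in o$ with $f(ax^{-1}\gamma x)\neq 0$. Since $\mathrm{supp}(f)$ and $\Omega$ are compact and $G(F)$ is discrete in $G(\AAA)$ modulo the centre, this confines $\gamma$ to a finite union of $G(F)$-conjugacy classes, so only finitely many equivalence classes $o\in\mathcal{E}$ can contribute at this auxiliary $X$.

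The remaining, and in my view most delicate, step is to transfer this finiteness from the convenient large $X$ back to $X=0$, which is the version the corollary is actually stated for. One route is to observe that the collection of $o$ for which $k^{\xi,X}_o\not\equiv 0$ does not grow as $X$ varies: unfolding the definition shows that non-vanishing ultimately requires some $\gamma\in M_Q(F)\cap o$ to bring $\mathrm{supp}(f)$ into contact with a conjugate, a condition whose dependence on $X$ is only through restricting which $x$ may occur, not through enlarging the set of allowed $\gamma$. A more direct alternative is to rerun the argument at $X=0$: the compact support of $k^{\xi}_o$ from the first assertion of Theorem \ref{Main} lets one work within a fixed Siegel set, and the combinatorial control provided in Section \ref{u} (notably Theorem \ref{complementary} and the complementary polyhedra machinery) bounds the range of indices $\delta\in Q(F)\backslash G(F)$ entering the definition of $k^{\xi}_o$; together with the compactness of $\mathrm{supp}(f)$, this again forces $\gamma$, and hence $o$, into a finite set. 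Either way, the key obstacle is the $X$-dependence of the truncation $\htau_Q(H_B(\delta x)+[s_{\delta x}\xi]-X)$, which is resolved by this support analysis.
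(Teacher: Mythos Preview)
Your opening paragraph is fine: the well-definedness of each $J^{\xi}_o(f)$ follows from the first assertion of Theorem~\ref{Main} exactly as you say, and once finiteness is established the interchange of sum and integral over \eqref{geomdec} is immediate.

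The gap is in your transfer step. Route~1 does not work: unfolding the definition of $k^{\xi,X}_o$, the truncation $\htau_Q(H_B(\delta x)+[s_{\delta x}\xi]-X)$ becomes \emph{less} restrictive as $d(X)$ grows, so the set of $x$ (and hence of $\gamma$, and hence of $o$) that can contribute \emph{increases} with $X$, not decreases. There is no reason the finite set you find at one large $X$ should contain the contributing classes at $X=0$. Route~2 is closer to the mark but relies on working ``within a fixed Siegel set''; the statement of Theorem~\ref{Main} only gives you a compact support for $k^{\xi}_o$ that a~priori depends on $o$, so you have not justified that a single compact set works for all $o$ simultaneously.

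The missing observation is that this uniformity is actually available, not from the statement of Theorem~\ref{Main} but from its proof. The expression \eqref{anotherex} writes
\[
k^{\xi,X}_o(x)=\sum_{P\in\mathcal{P}(B)}\sum_{\delta\in P(F)\backslash G(F)}\Gamma_P\bigl(H_B(\delta x)+[s_{\delta x}\xi]-X',\,X-X'\bigr)\,F^{P,\xi,X'}(\delta x)\,k_{P,o}(\delta x),
\]
and the factor $\Gamma_P(\cdot)F^{P,\xi,X'}(\cdot)$ has compact support on $P(F)\backslash G(\AAA)/Z_G(\AAA)$ \emph{independent of $o$}. So for any $X$ (in particular $X=0$), a nonzero $k^{\xi,X}_o$ forces $k_{P,o}(y)\neq 0$ for some $P$ and some $y$ lying in a fixed compact lift $\tilde K_P\subset G(\AAA)$. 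Rewriting $k_{P,o}(y)$ as an integral over a compact fundamental domain for $N_P(F)\backslash N_P(\AAA)$ and using Corollary~\ref{descent}, this yields $\gamma a\in D$ for some $\gamma\in o\cap P(F)$, $a\in\Xi_G$, and a fixed compact $D\subset G(\AAA)$. Discreteness of $P(F)\Xi_G$ in $G(\AAA)$ then gives finitely many $\gamma$, hence finitely many $o$. This is exactly your large-$X$ discreteness argument, run directly at $X=0$; no transfer is needed once you have the uniform support bound from \eqref{anotherex}.
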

\begin{remark}
If the characteristic is not too large, but still satisfies $(\ast)_G$ of the Section \ref{chara}, then the class $o\in \mathcal{E}$ consisting of elements which do not admit Jordan-Chevalley decomposition can be non-empty, hence the corresponding contribution $J^{\xi}_o(f)$ can be non-zero.
\end{remark}

\subsection{Quasi-polynomial behaviour} 
The definition below is taken from \cite[4.5.3]{Chau} (although slightly different). 
\begin{definition}
Let $\Phi: \ago_{B,\mathbb{Q}}\rightarrow \mathbb{C}$ be a function. It's called a quasi-polynomial, if for any lattice $L_0\subseteq \ago_{B,\mathbb{Q}}$, there exist a finite set $\mathfrak{f}\subseteq \frac{2\pi i}{\log q} \ago_{B,\mathbb{Q}}^{*}$,  and a family of polynomials $(p_\nu)_{\nu \in \mathfrak{f}}$ such that for any $X\in L_0$ we have
\[ \Phi(X)=\sum_{\nu \in\mathfrak{f}}p_\nu(X)q^{  \langle{\nu, X}\rangle   } . \]
\end{definition}

The following result is then an analogue of a theorem of Arthur over number fields and a generalization of the case $G=GL_n$ and $\xi=0$ of a result of Chaudouard \cite[Théorème 6.1.1.(4)]{Chau}. 
\begin{theorem}\label{polynomial}
For each $o\in \mathcal{E}$, the maps $X\mapsto J^{G,\xi, X}_{o}(f)$ and $X\mapsto J^{\ggg,\xi, X}_{o}(f)$ are quasi-polynomials. 
\end{theorem}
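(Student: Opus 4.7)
The plan is to use Theorem \ref{Main} to get an explicit formula for $k^{\xi,X}_o$ when $d(X)$ is large, read off quasi-polynomial behaviour in that regime via reduction theory, and then propagate it to arbitrary $X\in\ago_B$ by an induction on the semisimple rank of $G$.

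First I would fix a lattice $L_0\subseteq\ago_{B,\mathbb{Q}}$ and restrict attention to $X\in L_0$ with $d(X)$ sufficiently large (depending on $\xi$ and $f$). By Theorem \ref{Main},
\[
J^{\xi,X}_o(f)=\int_{G(F)\backslash G(\AAA)/\Xi_G} F^{\xi,X}(x)\,k_{G,o}(x)\,\d x,
\]
where $F^{\xi,X}$ is the characteristic function of an explicit compact subset cut out by Arthur-type conditions of the form $\htau_{Q}(H_B(\delta x)+[s_{\delta x}\xi]-X)$. Unfolding the integral using Langlands's decomposition and a Siegel set, the dependence on $X$ localises on the translation of the chamber defined by these $\htau$-conditions. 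Each resulting summand becomes, after integrating the Archimedean-free pieces of $k_{G,o}$, a sum over lattice points of $\ago_B$ lying in a polytope whose vertices are affine functions of $X$, weighted by characters $q^{\langle\nu,\cdot\rangle}$ coming from the Harish-Chandra map. An Ehrhart-type count then produces, for $X\in L_0$ with $d(X)\gg 0$, an expression of the form
\[
J^{\xi,X}_o(f)=\sum_{\nu\in\mathfrak{f}}p_\nu(X)\,q^{\langle\nu,X\rangle},
\]
with $\mathfrak{f}\subseteq\frac{2\pi i}{\log q}\,\ago_{B,\mathbb{Q}}^{*}$ finite and $p_\nu$ polynomials. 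This settles the quasi-polynomial assertion in the deep positive chamber.

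To extend the statement to all $X$, I would derive an Arthur-type deformation identity: for any two truncation parameters $X,X'\in\ago_B$,
\[
k^{\xi,X'}_o(x)-k^{\xi,X}_o(x)=\sum_{P\in\mathcal{P}(B),\,P\neq G}(\text{explicit term})_P(x;X,X'),
\]
where each explicit term is a sum, over classes $o_P$ in $M_P(F)$ inducing $o$, of a function of the form $k^{\xi,X_P}_{o_P}$ for the Levi subgroup $M_P$, convolved with the characteristic function $\tau_P^G(H_B(\delta x)-X_P)$ of a translated chamber. Integrating and using that $\mathrm{rk}_{\mathrm{ss}}(M_P)<\mathrm{rk}_{\mathrm{ss}}(G)$ for $P\neq G$, the induction hypothesis gives quasi-polynomial behaviour for the $M_P$-terms in $X_P$, while the $\tau_P^G$-factor contributes an explicit polynomial truncation. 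Combining with the direct quasi-polynomial expression from the first step at some fixed base point $X_0$ deep in the positive chamber then yields quasi-polynomial behaviour in $X$ on all of $L_0$, which is the required conclusion. The base case of the induction is the torus case, where $\mathcal{E}$ consists of single elements and $J^{\xi,X}_o(f)$ is simply a compactly supported lattice sum whose quasi-polynomial nature is immediate.

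The main obstacle will be the deformation identity in the third step, where one has to keep track of how $\htau_Q(H_B(\delta x)+[s_{\delta x}\xi]-X)$ transforms when $X$ is shifted. The projection term $[s_{\delta x}\xi]$ depends discontinuously on $\delta x$ through the Weyl-element $s_{\delta x}\in W_\infty^Q\backslash W_\infty$, so the standard Arthur combinatorics relating $\htau$ and $\tau$ across parabolic subgroups must be applied separately on each of the finitely many strata where $[s_{\delta x}\xi]$ is constant; Proposition \ref{projcp} and Theorem \ref{complementary} guarantee that within each stratum the shifted family is still a complementary polyhedron, which is what makes the induction go through. The bookkeeping is non-trivial but, once the stratification is fixed, reduces to the same combinatorial identities used by Arthur over number fields and by Chaudouard for $GL_n$, so no new phenomena should appear beyond the discrete $\xi$-contribution.
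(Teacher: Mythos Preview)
Your overall strategy---establish quasi-polynomial behaviour for $d(X)\gg 0$ and then propagate downward---is plausible in spirit but is considerably more roundabout than what the paper actually does, and the two key steps you outline are both underspecified in ways that matter.

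The paper does \emph{not} first analyse $\int F^{\xi,X}(x)k_{G,o}(x)\,\d x$ via an Ehrhart-type count, nor does it use induction on the semisimple rank. Instead it exploits the expression (\ref{anotherex}) already obtained in the proof of Theorem~\ref{Main}, which is valid for \emph{every} $X$ once an auxiliary $X'$ is fixed deep enough:
\[
k^{\xi,X}_o(x)=\sum_{P\in\mathcal{P}(B)}\sum_{\delta\in P(F)\backslash G(F)}\Gamma_P\bigl(H_B(\delta x)+[s_{\delta x}\xi]-X',\,X-X'\bigr)\,F^{P,\xi,X'}(\delta x)\,k_{P,o}(\delta x).
\]
The entire $X$-dependence is isolated in the second argument of $\Gamma_P$. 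After integrating and using Iwasawa decomposition, the $X$-dependent factor becomes a lattice sum $\sum_{H\in H_P(\Xi_M)/H_P(\Xi_G)}\Gamma_P(H+H_0,X-X')$, and a single lemma (Lemma~\ref{quasi-p}, proved by a direct geometric-series/finite-Fourier computation) shows that this is a quasi-polynomial in $X$. No induction, no separate treatment of the deep chamber.

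Regarding your route: in Step~1, $F^{\xi,X}$ is not a single $\htau$-condition but the characteristic function of the $(\xi,X)$-semistable locus, i.e.\ the \emph{complement} of a union of $\htau_P$-half-spaces over all proper $(P,\delta)$; turning $\int F^{\xi,X}k_{G,o}$ into a clean lattice-points-in-a-polytope count is not immediate and you have not indicated how to do it. In Step~2, the ``deformation identity'' you want, expressing $k^{\xi,X'}_o-k^{\xi,X}_o$ as a sum of genuine Levi truncated kernels $k^{\xi,X_P}_{o_P}$ for $M_P$, does not exist in that form: what one actually gets is precisely the $\Gamma_P$-weighted expression above, whose inner pieces $F^{P,\xi,X'}k_{P,o}$ are \emph{not} $M_P$-truncated kernels (they live on $P(F)\backslash G(\AAA)$, not on $M_P(F)\backslash M_P(\AAA)$, and carry the full $G(\ooo)$-integration). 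So the induction hypothesis on $M_P$ does not apply to them directly; one would need a further descent argument (via Proposition~\ref{FP=FM}) that you have not supplied. Once you write down the correct identity, the induction becomes superfluous because $\Gamma_P$ already carries all the $X$-dependence and Lemma~\ref{quasi-p} finishes the job in one stroke.
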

The proof will be given in the Section \ref{proofm}.

\subsection{A trace formula for Lie algebras}\label{FTLie}
Suppose that our assumption $(*)_{\ggg}$  on characteristic in the Section \ref{chara} is satisfied.

Let $\langle \cdot, \cdot \rangle$ be a $G$-invariant bilinear  form on $\ggg$ defined over $\mathbb{F}_q$, which exists thanks to the assumption on characteristic (Proposition \ref{ch}). Thus, it defines by taking respectively $\AAA$-points,  a $G(\AAA)$-invariant non-degenerate bilinear form on $\ggg(\AAA)$. 

We fix a non-trivial additive character: 
\( \psi: F \backslash \AAA\rightarrow \mathbb{C}^{\times}. \)
For any $f\in \mathcal{C}_{c}^{\infty}(\ggg(\AAA))$, the global Fourier transformation is defined by \begin{equation}\hat{f}(X):=\int_{\ggg(\AAA)}  f(X)\psi(\langle X,Y\rangle)\d Y. \end{equation}

By Poisson summation formula, for any $x\in G(\AAA)$ we have 
\[ \sum_{X\in \ggg(F)} f(\ad(x^{-1})X) = q^{(1-g)\dim \ggg}    \sum_{X\in \ggg(F)} \hat{f}(\ad(x^{-1})X). \]
where the constant $q^{(1-g)\dim \ggg} $ is the inverse of volume of $\ggg(\AAA)/\ggg(F)$ since the measure is normalized so that $\vol(\ggg(\ooo))=1$.

Over a number field, when $\xi$ is trivial, the following formula is obtained already in \cite{ChauLie}.  
\begin{theorem}\label{TFL} Let $J^{\xi}(f)$ be the truncated trace by taking $X=0$.
For any $f\in \mathcal{C}_c^{\infty}(\ggg(\AAA))$, we have
 \[J^{\xi}(f)=q^{(1-g)\dim \ggg} J^{\xi}(\hat{f}). \]
\end{theorem}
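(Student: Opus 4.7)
The plan is to adapt the proof of the analogous number-field formula by Chaudouard (\cite{ChauLie}, $\xi = 0$ case) to our setting. The $\xi$-dependence is inessential because the term $[s_{\delta x}\xi]$ in the cutoff $\htau_Q$ only depends on the coset of $\delta x$ modulo the Iwahori at $\infty$, and is therefore unaffected by the Poisson summation operations performed on the internal Lie-algebra sums below.

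First, I would unfold the defining integral to get
\[
J^{\xi}(f) = \sum_{Q \in \mathcal{P}(B)} (-1)^{\dim \ago_Q^G} \int_{Q(F) \backslash G(\AAA)/\Xi_G} \htau_Q\bigl(H_B(x) + [s_x \xi]\bigr) \, \kkk_Q(x) \, \d x,
\]
with absolute convergence coming from the compact support statement of Theorem \ref{Main}. Under hypothesis $(\ast)_\ggg$, the $G$-invariant bilinear form $\langle\cdot,\cdot\rangle$ restricts non-degenerately to $\mmm_Q$ (its orthogonal in $\ggg$ being $\nnn_Q \oplus \bar{\nnn}_Q$) and pairs $\nnn_Q$ perfectly with $\bar{\nnn}_Q$, both of which are isotropic; in particular $\nnn_Q^{\perp} = \qqq$. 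Moreover, $G$-invariance yields the key intertwining $\widehat{f \circ \ad(x^{-1})} = \hat{f} \circ \ad(x^{-1})$ used throughout (the adjoint action has trivial determinant since it factors through the adjoint quotient).

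The central step applies Poisson summation on the lattice $\mmm_Q(F) \subset \mmm_Q(\AAA)$, combined with the partial integration over $\nnn_Q(\AAA)$ and Fourier inversion in the $\bar{\nnn}_Q$-direction, to convert the $Q$-term built from $f$ into the corresponding $\bar{Q}$-term built from $\hat{f}$. The overall prefactor $q^{(1-g)\dim\ggg}$ combines $q^{(1-g)\dim\mmm_Q}$ coming from Poisson on $\mmm_Q$ with $q^{(1-g)\cdot 2\dim\nnn_Q}$ coming from the $\int_{\nnn_Q}\!/\!\int_{\bar{\nnn}_Q}$ duality via Fourier inversion, using the identity $\dim\ggg = \dim\mmm_Q + 2\dim\nnn_Q$. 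Re-indexing the resulting alternating sum via the involution $Q \mapsto w_0\bar{Q}w_0^{-1}$ on $\mathcal{P}(B)$, which preserves the sign $(-1)^{\dim\ago_Q^G}$, one recovers $q^{(1-g)\dim\ggg} J^{\xi}(\hat{f})$.

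The main obstacle is the local Poisson identity at each proper $Q$: the partial Fourier transform produced by Poisson on $\mmm_Q(F)$ is an integral over the strictly smaller subspace $\qqq(\AAA) = \nnn_Q^{\perp}$, so it is not simply the evaluation of $\hat{g}$; the correct identification requires supplementing with an integration over $\bar{\nnn}_Q(\AAA)$, which must be absorbed using the isotropy of $\bar{\nnn}_Q$ and careful volume bookkeeping. A secondary technicality is verifying that, under the parabolic swap $Q \leftrightarrow w_0\bar{Q}w_0^{-1}$, the truncation data $\htau_Q(H_B(x) + [s_x\xi])$ transforms correctly after the compensating change of integration variable; this follows from the $W$-equivariance of the Harish-Chandra map and of $s_x$. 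Both points are addressed in essentially the same way as in the number-field case of \cite{ChauLie}.
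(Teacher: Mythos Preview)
Your proposal has two genuine problems, and moreover the paper proceeds quite differently.

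\textbf{Convergence of the unfolded integrals.} You unfold $J^{\xi}(f)$ into $\sum_Q(-1)^{\dim\ago_Q^G}\int_{Q(F)\backslash G(\AAA)/\Xi_G}\htau_Q(\ldots)\,\kkk_Q(x)\,\d x$ and justify this by the compact-support statement of Theorem~\ref{Main}. But that theorem only gives compact support of the \emph{full alternating sum} $\kkk^{\xi}(x)$ on $G(F)\backslash G(\AAA)/Z_G(\AAA)$; for proper $Q$ the individual integrals diverge in general (this divergence is exactly what the alternating-sum truncation is designed to cancel). If you want a parabolic-by-parabolic argument you must work pointwise in $x$ at the kernel level, where all sums are finite, not after unfolding the integral.

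\textbf{The parabolic swap is the wrong mechanism.} The partial Poisson step does \emph{not} turn a $Q$-term into a $\bar Q$-term. With $g=f\circ\ad(y^{-1})$, integrating $\hat g$ over $\nnn_Q(\AAA)$ is, on the Fourier side, restriction to $\nnn_Q^{\perp}=\qqq$ (not $\bar\qqq$); one finds that $\int_{\nnn_Q(\AAA)}\hat g(\Gamma+U)\,\d U$ is a constant multiple of the $\mmm_Q$-Fourier transform of $\Gamma\mapsto\int_{\nnn_Q(\AAA)}g(\Gamma+U)\,\d U$. Combined with Poisson on $\mmm_Q(F)$ this yields $\kkk_Q(y;f)=q^{(1-g)\dim\ggg}\,\kkk_Q(y;\hat f)$ with the \emph{same} $Q$, so no re-indexing by $w_0$ is needed. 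Your claimed ``$W$-equivariance of the Harish-Chandra map'' (in the form $H_B(w_0^{-1}x)=w_0^{-1}H_B(x)$) is false and is anyway irrelevant once the swap is abandoned.

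\textbf{How the paper argues.} The paper's proof bypasses all of the above. By Theorem~\ref{Main}, for $d(X)$ large one has $\kkk^{\xi,X}(x;f)=F^{\xi,X}(x)\sum_{\Gamma\in\ggg(F)}f(\ad(x^{-1})\Gamma)$, and a single application of Poisson on the full lattice $\ggg(F)$ gives $\kkk^{\xi,X}(x;f)=q^{(1-g)\dim\ggg}\,\kkk^{\xi,X}(x;\hat f)$ for such $X$. Integrating, $J^{\xi,X}(f)=q^{(1-g)\dim\ggg}J^{\xi,X}(\hat f)$ for $X$ deep enough; since both sides are quasi-polynomials in $X$ by Theorem~\ref{polynomial}, the identity extends to $X=0$. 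This is also how \cite{ChauLie} proceeds in the number-field case, so your description of that reference is inaccurate.
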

\begin{proof}By Theorem \ref{Main}, it follows that \[ \kkk^{\xi, X}(f)(x)=q^{(1-g)\dim \ggg} \kkk^{\xi, X}({\hat{f}})(x), \] when $X$ is deep enough in the positive chamber.
Hence for $X$ deep enough, we have \[ J^{\xi, X}(f)=q^{(1-g)\dim \ggg}  J^{\xi, X}(\hat{f}).\]
As $J^{\xi, X}(f)$ and $J^{\xi, X}(\hat{f})$ are quasi-polynomials (the Theorem \ref{polynomial}), the equality then extends to all $X\in \ago_{B,\mathbb{Q}}$, in particular to $X=0$. 
\end{proof}

\section{Reduction theory and combinatoric lemmas}
In this subsection, we study reduction theory using the notion of complementary polyhedron.

\subsection{$(\xi, X)$-canonical parabolic subgroup}
For any parabolic subgroup $R$ of $G$ defined over $F$, there is a unique  standard parabolic subgroup $Q$, containing the fixed Borel subgroup $B$,  and an element $\eta\in Q(F)\backslash G(F)$ such that $R=\eta^{-1} Q \eta$. We will take advantage of this to denote a parabolic subgroup defined over $F$ by a pair $(Q, \eta)$ if there is no confusion.

\begin{definition}[semi-stability and canonical refinement]\label{sscr}
Let $x \in G(\AAA)$.
A parabolic subgroup $(Q, \eta)$ of $G$ defined over $F$ is called $(\xi, X)$-semi-stable for $x$ if for any $(P, \delta)$ properly contained in $(Q, \eta)$ (i.e. $\delta^{-1} P \delta \subsetneq \eta^{-1}Q\eta$), we have
\[ \hat{\tau}_{P}^{Q}(H_B(\delta x)+ [s_{\delta x}\xi ]-X )=0. \]
The element $x\in G(\AAA)$ itself is called $(\xi, X)$-semi-stable if $G$ is $(\xi, X)$-semi-stable for $x$.

A parabolic subgroup $(P, \delta)$ contained in $(Q,\eta)$ is called a $(\xi, X)$-canonical refinement of $(Q,\eta)$ for $x\in G(\AAA)$ if  \\
 (1.) $(P, \delta)$ is $(\xi, X)$-semi-stable for $x$; \\
 (2.) For any $\alpha\in \Delta_{P}^{Q}$,  \[ \langle\alpha, H_B(\delta x)+ [s_{\delta x}\xi]-X \rangle>0. \]
\end{definition}

The following result is an adelic version of the existence and uniqueness of semi-stable reduction for $G$-bundles with parabolic structures over a curve \cite[Theorem 4.3.2]{HS} based on the work of \cite{Behrend}. 
\begin{theorem}\label{Behrend}
Let $(\xi, X)\in\ago_{B, \infty}\times \ago_B$ be an admissible pair of vectors. 
For any $x\in G(\AAA)$ and any parabolic subgroup $(Q, \eta)$, there exists a unique $(\xi, X)$-canonical refinement of $(Q, \eta)$ for $x$.  
\end{theorem}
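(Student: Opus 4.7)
The plan is to reduce the statement to the case $(Q,\eta)=(G,1)$ and then invoke Behrend's canonical reduction theorem for complementary polyhedra (the combinatorial engine behind the canonical reduction of $G$-bundles used in \cite[Theorem 4.3.2]{HS}). For the reduction, a pair $(P,\delta)\subseteq(Q,\eta)$ corresponds, via conjugation by $\eta$, to an $F$-parabolic subgroup of the Levi $M_Q$, and the two conditions of Definition \ref{sscr} involve only roots in $\Delta_B^Q$ and weights in $\hat\Delta_P^Q$, which live inside the root system $\Phi(M_Q,A)$. By Remark \ref{Q}(1), the restriction to $W^Q$ of the complementary polyhedron attached to $x$ is itself complementary for $\Phi(M_Q,A)$, so applying the result for $M_Q$ to (the appropriate translate of) $x$ yields the result for the general pair $(Q,\eta)$.

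To translate Definition \ref{sscr} into a combinatorial statement on the polyhedron, I would write any $\delta\in P(F)\backslash G(F)$ as $\delta=p\,w_s$ with $p\in P(F)$ and $s\in W$ (well defined modulo $W^P$), and set $Y_s:=H_B(w_s x)+[s_{w_s x}\xi]-X$. Theorem \ref{complementary} says that $X_s:=s^{-1}Y_s$ is a complementary polyhedron in $\ago_B$. Using that $\hat\tau_{P'}^P$ and the pairing with $\alpha\in\Delta_P$ factor through $\ago_P$, together with the $P(F)$-left invariance of $H_B$ and Proposition \ref{4.1}, Definition \ref{sscr} for $(P,\delta)$ inside $G$ becomes the requirement that there exist a unique pair $(P,s)$, with $P$ standard and $s\in W/W^P$, such that
\begin{enumerate}
\item[(a)] $\langle\alpha,\,s X_s\rangle>0$ for every $\alpha\in\Delta_P$, and
\item[(b)] the restricted family $(X_t)_{t\in W^P s}$ in $\ago_B^P\cong\ago_{B\cap M_P}$ is semi-stable for $\Phi(M_P,A)$, i.e.\ $\hat\tau_{P'}^P(sX_t)=0$ for every proper standard $P'\subsetneq P$ and every such $t$.
\end{enumerate}

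Existence and uniqueness of such a pair $(P,s)$ is exactly Behrend's canonical-reduction theorem for complementary polyhedra, proved by induction on semisimple rank: applying convexity to the defining relations $X_t-X_s=b_\gamma(s,t)\gamma^\vee$ with $b_\gamma(s,t)\geq 0$, one shows that the set of standard parabolics $P$ admitting some $s$ satisfying (a) has a unique maximal element, and the induction hypothesis applied to the restricted polyhedron (Remark \ref{Q}(1)) then delivers the refinement satisfying (b). The main obstacle I anticipate is essentially bookkeeping: the Iwasawa element $s_{\delta x}$ lives in $W_\infty$ rather than in $W$ and varies with $\delta$, so one has to use Proposition \ref{4.1} and Theorem \ref{proj} to verify that $[s_{\delta x}\xi]-X$ really is $P(F)$-left invariant modulo $\ago_P$, which is what makes the combinatorial reformulation above well defined. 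The admissibility hypothesis on $(\xi,X)$ enters only through Theorem \ref{complementary}, that is, only to guarantee that $(X_s)_{s\in W}$ is genuinely a complementary polyhedron; no further restriction on $(\xi,X)$ is needed in the combinatorial step.
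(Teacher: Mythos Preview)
Your overall strategy---reduce to Behrend's canonical-reduction theorem for complementary polyhedra, with admissibility entering only through Theorem~\ref{complementary}---is exactly the paper's approach. But the translation step contains a genuine gap.

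You claim that any $\delta\in P(F)\backslash G(F)$ can be written as $\delta=p\,w_s$ with $p\in P(F)$ and $s\in W$, well defined modulo $W^P$. This is false: $P(F)\backslash G(F)$ is the set of $F$-points of a (partial) flag variety, which is infinite, while $W^P\backslash W$ is finite. The Bruhat decomposition gives $G(F)=\bigcup_s P(F)w_s B(F)$, not $\bigcup_s P(F)w_s$; the residual $N_B(F)$-factor affects $H_B(\delta x)$ and hence the conditions in Definition~\ref{sscr}. Consequently your pairs $(P,s)$ parametrize only the semi-standard parabolic subgroups $w_s^{-1}Pw_s$, whereas both the semi-stability condition~(1) and the uniqueness assertion must be checked against \emph{all} $F$-parabolic subgroups contained in $(Q,\eta)$.

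The paper resolves this as follows. It introduces a degree $\deg_{\delta x}^Q(P)$ for each pair $(P,\delta)$ and observes, via Bruhat decomposition in $Q(F)$, that for any two pairs $(P_1,\delta_1)$, $(P_2,\delta_2)$ contained in $(Q,\eta)$ one may choose representatives so that $\delta_2=w\delta_1$ for some $w\in W^Q$. Both degrees then become Behrend's degrees of the semi-standard parabolics $P_1\cap M_Q$ and $wP_2w^{-1}\cap M_Q$ for the \emph{same} complementary polyhedron, namely the one attached to $\delta_1 x$. Only after this normalization can Behrend's uniqueness of the special facet be invoked. The paper then checks explicitly that Behrend's characterization of the special facet (\cite[3.10]{Behrend}) matches the two conditions of Definition~\ref{sscr}. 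Your reformulation (a)--(b) is morally this last step, but without the Bruhat argument it applies only to a finite subset of the parabolics in play.
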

\begin{proof}
Let $P$ be a standard parabolic subgroup of $Q$ defined over $F$. For any $x\in G(\AAA)$, we define 
\begin{equation} \label{degree} \deg_{x}^Q(P) =  \sum_{\alpha\in \Phi(N_{P}\cap M_Q,A)^{r} } \langle \alpha, H_B(x)+ [s_{ x}\xi] -X \rangle, \end{equation}
where $ \Phi(N_{P}\cap M_Q,A)^{r}$ is the set of reduced roots of $A$ in $N_{P}\cap M_Q$. 
If more generally $P$ is only supposed to be a semi-standard parabolic subgroup of $Q$, we extend the definition by choosing an $s\in W^Q$ so that $P_0=w_sPw_s^{-1}$ is standard and 
we define $\deg_{x}^{Q}(P)$ by requiring that 
\begin{equation} \deg_{x}^{Q}(P)= \deg_{w_s x}^Q(P_0).  \end{equation}
We have (see \cite[Proposition 1.9]{Behrend})   
  \begin{equation}\label{weight}\sum_{\alpha\in \Phi(N_{P_0}\cap M_Q,A)^{r} } \alpha=\sum_{\varpi\in \hat{\Delta}_{B}^{Q}-\hat{\Delta}_{B}^{P_0}}n_{\varpi} \varpi , \end{equation}
with $n_{\varpi}\geq 2$.  As the restriction of an element in $ \hat{\Delta}_{B}^{Q}-\hat{\Delta}_{B}^{P_0} $ to $\ago_B^{P_0}$ is trivial, 
it follows that for any standard parabolic subgroup $P_0$ and $x\in G(\AAA)$, we have
 \begin{equation} \deg_{\delta x}^Q(P_0)=\deg_{x}^Q(P_0), \quad\forall \delta\in P_0(F), \end{equation}
and in particular the definition of $\deg_{x}(P)$ for a semi-standard parabolic subgroup $P$ is independent of the choice of $s$.

Given any complementary polyhedron for a root system, Behrend (\cite[Definition 3.1]{Behrend}) has associated each facet with a degree. Specialised to our case, for any $P\in \mathcal{P}^{Q}(T)$, the above defined  $\deg_x^{Q}(P)$ coincides with Behrend's degree for the facet corresponding to $P\cap M_Q$ with respect to the following complementary polyhedron (Proposition \ref{complementary} and Remark \ref{Q})
\[ (s^{-1}H_B(w_s x)+ s^{-1}[s_{w_s x}\xi] -s^{-1}X ) _{s\in W^Q}, \]
in $\ago_B\cong \ago_{B\cap M_Q}$ for the reduced root system $\Phi(M_Q,A)^{r}$. 
Behrend has proved (\cite[Corollary 3.14, Corollary 3.16]{Behrend}) that for any complementary polyhedron, there is a unique facet (equivalently a semi-standard parabolic subgroup) which is the smallest for the partial order given by inclusions of the closure of the facets (equivalently the largest semi-standard parabolic subgroup) in the set of facets  with maximal degree, this unique facet (parabolic subgroup) is called special.

Fixing an $x\in G(\AAA)$, the set 
\begin{equation}\label{65} \{\deg_{\delta x}^Q(P)| B\subseteq P\subseteq Q, \delta\in P(F)\backslash Q(F)\eta \} \end{equation} has an upper bound (because of equality (\ref{weight}) and \cite[(5.2) p.936]{Arthur} see also \cite[3.5.4]{LabWal}). By discreteness of degree, the upper bound can be achieved, say by a pair $(P_1, \delta_1)$. We suppose that $(P_1, \delta_1)$ is also a largest element with such property for the partial order defined by inclusion. Then the pair $(P_1, \delta_1)$ sharing these properties is unique by Behrend's uniqueness theorem (\cite[Corollary 3.14, Corollary 3.16]{Behrend}) on the special parabolic subgroup for the family of the complementary polyhedra \begin{equation} \label{cp1} (s^{-1}H_B(w_s \delta x)+ s^{-1}[s_{w_s\delta x}\xi] -s^{-1}X ) _{s\in W^Q},  \end{equation}
when varying $\delta \in P_1(F)\delta_1$. 
In fact, for any two parabolic subgroups $(P_1,\delta_1)$ $(P_2, \delta_2)$ contained in $(Q,\eta)$, with $P_1, P_2$ standard, $\delta_1\in P_1(F)\backslash G(F)$ and $\delta_2\in P_2(F)\backslash G(F)$, we must have $\delta_1\delta_2^{-1}\in Q(F)$. Applying  Bruhat decomposition of $Q(F)$ to $\delta_1\delta_2^{-1}$, we can choose as representatives  $\delta_1, \delta_2\in G(F)$ such that $\delta_2=w\delta_1$ for some Weyl element $w$ of $M_Q$.  We see that $\deg_{\delta_1 x}^Q(P_1)$ and $\deg_{\delta_2 x}^Q(P_2)$ (defined by (\ref{degree})) are respectively the degree of the parabolic subgroups $P_1\cap M_Q$ and $wP_2w^{-1}\cap M_Q$ for the complementary polyhedron \[ (s^{-1}H_B(w_s \delta_1x)+ s^{-1}[s_{w_s\delta_1x}\xi ]-s^{-1}X ) _{s\in W^Q}. \]


Finally, observe that the statement of the theorem is a reformulation of the \cite[3.10]{Behrend} and the choice of $(P_1, \delta_1)$:  a pair $(P_1, \delta_1)$ shares these properties if and only if it is a $(\xi, X)$-canonical refinement of $(Q,\eta)$. In fact \cite[3.2(iii)]{Behrend} and \cite[3.10.B2]{Behrend} imply the Definition \ref{sscr}.(1.). Let's show that  \cite[3.10.B1]{Behrend} is equivalent to the inequality in Definition \ref{sscr} (2.). We denote the vector $H_B(\delta_1 x)+ [s_{\delta_1 x}\xi]-X$ by $H$ for convenience. Behrend's set $\mathrm{vert}(P_1)$ is $\hat{\Delta}_{P_1}^Q$ in our language. Let $\lambda=\hat{\varpi}_{\alpha}\in \hat{\Delta}_{P_1}^Q$, with $\alpha\in  \Delta_B^Q- \Delta_B^{P_1}$, we need to calculate $n(P_1,\lambda)$. Recall that $n(P_1, \lambda)$ is defined in the definition \cite[3.7]{Behrend} by 
\[ n(P_1,\lambda) = \langle \sum_{\beta \in \Psi(P_1,\lambda) } \beta ,    H  \rangle,
  \]
 while \cite[Lemma 3.6]{Behrend} says that $\sum_{\beta \in \Psi(P_1,\lambda) } \beta \in \mathbb{R}\hat{\Delta}_{P_1}^Q$, hence we have 
 \[ n(P_1,\lambda)=\langle \sum_{\beta \in \Psi(P_1,\lambda) } \beta ,    H_{P_1}  \rangle,  \]
where $H_{P_1}$ is the projection of $H$ in $\ago_{P_1}$.  
Note that by definition \cite[3.5]{Behrend}, an element in $\Psi(P_1, \lambda)$ is of the form $\alpha+\mu$ with $\mu\in \ago_B^{P_1, *}$, therefore 
 \[ n(P_1,\lambda)=  |\Psi(P_1, \lambda) | \langle \alpha ,    H_{P_1}  \rangle.    \]
Thus $n(P_1,\lambda)>0$ is equivalent to the inequality in Definition \ref{sscr} (2.) for the element in $\Delta_{P_1}^Q$ represented by $\alpha$. 
\end{proof}

\subsection{Functions $F^{Q, \xi, X}(\cdot)$}


\begin{definition}
Let $Q$ be a standard parabolic subgroup. Let $F^{Q,\xi, X}(x)$ be the function
defined for
$x\in Q(F)\backslash G(\AAA)$ by 
\[ F^{Q,\xi, X}(x)=\sum_{\{P|  B\subseteq P\subseteq Q\}}(-1)^{\dim\ago_P^Q}  \sum_{\delta\in P(F)\backslash Q(F)} \hat{\tau}_P^{Q}(H_B(\delta x) + [s_{\delta x} \xi] - X ). \]
By  Lemma 5.1. of \cite{A1}, the sum is always finite,  hence $F^{Q,\xi, X}(x)$ is well defined.
\end{definition}

\begin{lemm}
For any $x\in G(\AAA)$, $(\xi, X)\in \ago_{B,\infty}\times \ago_{B}$, and $Q$ a standard parabolic subgroup, one has
\begin{equation}\label{Lan}
1=\sum_{\{P|B\subseteq P\subseteq Q\}}\sum_{\delta\in P(F)\backslash Q(F)}{\tau}_P^Q(H_B(\delta x)+  [s_{\delta x}\xi] -X)F^{P, \xi, X}(\delta x).\end{equation}
\end{lemm}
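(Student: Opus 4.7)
The plan is to imitate the classical Langlands/Arthur combinatorial identity, with bookkeeping to handle the new shift by $[s_{\delta x}\xi]$. First I would substitute the defining formula for $F^{P,\xi,X}$ into the right hand side, obtaining a triple sum
\[
\sum_{B\subseteq R\subseteq P\subseteq Q}(-1)^{\dim\ago_R^P}\sum_{\delta\in P(F)\backslash Q(F)}\sum_{\gamma\in R(F)\backslash P(F)} \tau_P^Q\!\left(H_B(\delta x)+[s_{\delta x}\xi]-X\right)\hat\tau_R^P\!\left(H_B(\gamma\delta x)+[s_{\gamma\delta x}\xi]-X\right).
\]
The inner double sum over $(\gamma,\delta)$ is in bijection with $\delta'\in R(F)\backslash Q(F)$ via $\delta'=\gamma\delta$, and the goal is to rewrite both the $\tau_P^Q$ and $\hat\tau_R^P$ factors as functions of $H_B(\delta' x)+[s_{\delta' x}\xi]-X$.

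Second, I would verify the invariance properties that legitimize this change of variable. Since $\tau_P^Q$ depends only on the $\ago_P^Q$-component of its argument and $\hat\tau_R^P$ only on the $\ago_R^P$-component, what needs to be checked is that, for $\gamma\in P(F)$, the $\ago_P$-projection of $H_B(\gamma y)$ equals that of $H_B(y)$, and similarly $[s_{\gamma y}\xi]$ and $[s_y\xi]$ have the same image in $\ago_P$. The first is the product formula: for each $\chi\in X^*(P)_F$ one has $|\chi(\gamma)|_{\AAA}=1$, so $H_P(\gamma)=0$. The second is the remark already made in the paper just after the definition of $k^{\xi,X}$, namely that $s_{\gamma y}$ and $s_y$ represent the same coset in $W_\infty^P\backslash W_\infty$ whenever $\gamma\in P(F)$. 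Granting these, both factors may indeed be written in terms of the single vector $H(\delta' x):=H_B(\delta' x)+[s_{\delta' x}\xi]-X$.

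Third, after interchanging the order of summation the right hand side becomes
\[
\sum_{B\subseteq R\subseteq Q}\sum_{\delta'\in R(F)\backslash Q(F)}\Bigl(\sum_{R\subseteq P\subseteq Q}(-1)^{\dim\ago_R^P}\hat\tau_R^P(H(\delta' x))\,\tau_P^Q(H(\delta' x))\Bigr).
\]
At this point I would invoke the standard combinatorial identity of Langlands (see \cite[1.8.4]{LabWal})
\[
\sum_{R\subseteq P\subseteq Q}(-1)^{\dim\ago_R^P}\hat\tau_R^P(H)\,\tau_P^Q(H)=\delta_{R,Q},
\]
which holds pointwise in $H\in \ago_B$. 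Only the term $R=Q$ survives; then the outer sum over $\delta'\in Q(F)\backslash Q(F)$ has a single element and contributes $1$, yielding the identity.

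The only nontrivial point in this plan is the invariance of the shift $[s_{\delta x}\xi]$ modulo $\ago_P$ under left multiplication by $P(F)$, which is what allows the triple sum to collapse. Once that is in hand, the remainder is the usual formal manipulation and an application of the Langlands combinatorial lemma; in particular, we do not need admissibility of $(\xi,X)$ nor Theorem~\ref{Behrend} for this identity, although they will be essential when using \eqref{Lan} to analyze $k^{\xi,X}$.
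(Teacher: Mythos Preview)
Your proposal is correct and follows essentially the same route as the paper: substitute the definition of $F^{P,\xi,X}$, collapse the double coset sum, and apply the Langlands combinatorial identity (the paper phrases this last step as the matrix inversion of \cite[Proposition 1.7.2]{LabWal} rather than \cite[1.8.4]{LabWal}, but these are equivalent). Your write-up is more detailed than the paper's three-line proof, in particular making explicit the $P(F)$-invariance of the $\ago_P$-projection of $[s_{\delta x}\xi]$, which the paper leaves implicit.
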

\begin{proof}
We can insert directly the definition of $F^{P, \xi, X}(\delta x)$ into the right-hand-side of the equation. After changing  order of summation, the identity follows by the fact that the matrix $((-1)^{\dim \ago_P}\tau_{P}^{Q}(H))$ (indexed by standard parabolic subgroups $P\subseteq Q$) is the inverse of  
 $((-1)^{\dim \ago_P}\htau_{P}^{Q}(H))$ (Proposition 1.7.2 of \cite{LabWal}). 
\end{proof}

\begin{prop}\label{compact}
For any admissible pair $(\xi, X)\in \ago_{B,\infty}\times \ago_{B}$, 
the function $F^{G, \xi, X}$ is the characteristic function of the set consisting of those $x \in G(\AAA)$ which are $(\xi, X)$-semi-stable, i.e. the set of $x$ such that for any proper standard parabolic subgroup $P$ of $G$, any $\delta\in P(F)\backslash G(F)$, we have \[   \htau_P(H_B(\delta x)+ [s_{\delta x}\xi] -X)=0. \]
In particular the function $F^{G, \xi, X}$ is compactly supported over $G(F)\backslash  G(\AAA)/Z_G(\AAA)$ when $(\xi, X)$ is admissible. 
\end{prop}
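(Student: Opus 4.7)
The plan is to first identify $F^{G,\xi,X}$ pointwise with the characteristic function of the $(\xi,X)$-semi-stable locus by an induction on $\dim\ago_B^G$, and then deduce compactness from standard reduction theory.

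For the induction, the base case when $G$ is a torus is immediate since the only standard parabolic is $G$ itself, so $F^{G,\xi,X}\equiv 1$. For the inductive step, I would apply the partition identity \eqref{Lan} with $Q=G$,
\[
1 \;=\; \sum_{B\subseteq P\subseteq G}\ \sum_{\delta\in P(F)\backslash G(F)} \tau_P^G\bigl(H_B(\delta x)+[s_{\delta x}\xi]-X\bigr)\, F^{P,\xi,X}(\delta x),
\]
and isolate the $P=G$ term, which is $F^{G,\xi,X}(x)$ itself. Applying the inductive hypothesis to each Levi $M_P$ with $P\subsetneq G$ (using the canonical identification $\ago_{B\cap M_P}\cong\ago_B$), the factor $F^{P,\xi,X}(\delta x)$ equals $1$ precisely when $(P,\delta)$ is $(\xi,X)$-semi-stable for $x$ in the sense of Definition \ref{sscr}, and vanishes otherwise. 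Combined with $\tau_P^G(\,\cdot\,)=1$, which is condition~(2) of the definition of a canonical refinement, a summand in the identity is non-zero if and only if $(P,\delta)$ is a $(\xi,X)$-canonical refinement of $G$ for $x$. By Theorem \ref{Behrend} (which uses admissibility of $(\xi,X)$), exactly one such pair exists for every $x$: it is $(G,\mathrm{id})$ when $x$ is $(\xi,X)$-semi-stable, and some $(P_1,\delta_1)$ with $P_1\subsetneq G$ otherwise. Re-arranging the identity forces $F^{G,\xi,X}(x)=1$ in the former case and $F^{G,\xi,X}(x)=0$ in the latter, which is the desired pointwise description.

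For the compactness statement modulo $G(F)Z_G(\AAA)$, semi-stability applied with $\delta=\mathrm{id}$ and $P$ a maximal proper standard parabolic forces $\langle\varpi_\alpha,H_B(x)+[s_x\xi]-X\rangle\leq 0$ for every $\alpha\in\Delta_B$; since $\xi$, $X$ are fixed and $s_x$ ranges over a finite set, this yields an upper bound on each $\langle\varpi_\alpha,H_B(x)\rangle$. Harder's reduction theory (in the Behrend formulation already invoked in the proof of Theorem \ref{Behrend}) furnishes a Siegel domain $\mathfrak{S}$ with $G(\AAA)=G(F)\mathfrak{S}$ on which $\langle\alpha,H_B(x)\rangle$ is bounded below modulo the split central torus for every $\alpha\in\Delta_B$. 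Passing between simple roots and fundamental weights via the non-negative inverse Cartan matrix, the two bounds together confine the projection of $H_B(x)$ in $\ago_B^G$ to a bounded region; combined with the usual compactness of the unipotent and $G(\ooo)$ directions on $\mathfrak{S}$, this produces a compact image of the semi-stable locus in $G(F)\backslash G(\AAA)/Z_G(\AAA)$.

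The main technical obstacle is the bookkeeping inside the induction: one must check carefully that the inductive statement "$F^{P,\xi,X}(\delta x)=1$ iff $(P,\delta)$ is $(\xi,X)$-semi-stable for $x$" really encodes condition~(1) of the canonical refinement in Definition \ref{sscr} under the identification of standard parabolics of $G$ contained in $P$ with standard parabolics of $M_P$, so that together with condition~(2) encoded by $\tau_P^G$ the non-zero summands of \eqref{Lan} are exactly the canonical refinements and Theorem \ref{Behrend} contributes precisely one such term for each $x$. Once this calibration is in hand, the compactness step is a routine combination of the Siegel lower bound with the semi-stability upper bound.
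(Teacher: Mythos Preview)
Your argument is essentially correct but takes a different route from the paper's. The paper argues directly: it writes $F^{G,\xi,X}(x)$ as the alternating sum over all parabolic subgroups $(Q,\eta)$ of $\hat\tau_Q(H_B(\eta x)+[s_{\eta x}\xi]-X)$, regroups this sum according to the (unique, by Theorem~\ref{Behrend}) $(\xi,X)$-canonical refinement $(P,\delta)$ of each $(Q,\eta)$, and then observes that for each fixed semi-stable $(P,\delta)$ the resulting inner sum $\sum_{Q\supseteq P}(-1)^{\dim\ago_Q^G}\tau_P^Q(\cdot)\hat\tau_Q(\cdot)$ vanishes unless $P=G$ (Langlands' combinatorial lemma, \cite[1.7.2]{LabWal}). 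Your inductive approach via \eqref{Lan} reaches the same conclusion by peeling off the $P=G$ term and invoking uniqueness of the canonical refinement once per level; it trades the alternating-sign combinatorics for an extra layer of inductive structure.

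One caveat on your bookkeeping: if you genuinely induct on ambient reductive groups and pass to the Levi $M_P$ via Proposition~\ref{FP=FM}, the parameter becomes $(s_k\xi,X)$, and it is \emph{not} automatic that this pair is admissible for $M_P$ in the sense of Definition~\ref{admissible} (the Weyl twist $s_k$ can send positive roots to negative ones, while the admissibility inequalities are asymmetric in the sign of the root). The clean fix is the one your last paragraph already gestures at: formulate the induction not on ambient groups but on $\dim\ago_B^Q$ for standard parabolics $Q$ of the fixed $G$, proving directly that $F^{Q,\xi,X}(\eta x)=1$ if and only if $(Q,\eta)$ is $(\xi,X)$-semi-stable for $x$, always with the same globally admissible $(\xi,X)$. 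Theorem~\ref{Behrend} is already stated in this relative form (for arbitrary $(Q,\eta)$), so the inductive step via \eqref{Lan} with this $Q$ goes through without ever leaving $G$ or twisting $\xi$. With that adjustment your argument is complete; your compactness paragraph is precisely what the paper's citation of \cite[3.5.3]{LabWal} unpacks.
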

\begin{proof}
The proof is similar to that of  \cite[Lemme 2.4.2]{Chau}. 

Note that $F^{G, \xi, X}(x)$ equals 
\[  \sum_{\{(Q, \eta)\}}   (-1)^{\dim\ago_{Q}^G}  \hat{\tau}_Q(H_B(\eta x)+ [s_{\eta x}\xi] -X ), \]
where the sum is taken over the set of all parabolic subgroups of $G$ defined over $F$.  
By existence and uniqueness of $(\xi, X)$-canonical refinement (Theorem \ref{Behrend}), we can organize the above sum by grouping together those $(Q,\eta)$ under which $x$ has the same $(\xi, X)$-canonical refinement. Hence $F^{G, \xi, X}(x)$ equals 
\begin{equation}\label{semi-stable}
\sum_{\{(P, \delta)\}} \sum_{\{(Q,\eta)|\text{cr}(x)^{(Q,\eta)}=(P,\delta)  \}}   (-1)^{\dim\ago_{Q}}  \hat{\tau}_Q(H_B(\eta x) + [s_{\eta x}\xi ] -X ), \end{equation}
where the first sum is taken over $(\xi, X)$-semi-stable parabolic subgroups for $x$, and
the second sum is taken over the set of parabolic subgroups $(Q,\eta)$ such that the $(\xi, X)$-canonical refinement of $(Q,\eta)$ for $x$ is $(P, \delta)$. 

The inclusion $\delta^{-1}P\delta\subseteq \eta^{-1}Q\eta$ holds if and only if $P\subseteq Q$ and the image of $\delta$ in $Q(F)\backslash G(F)$ is $\eta$. 
Therefore, given a parabolic subgroup $(P,\delta)$ which is $(\xi, X)$-semi-stable for $x$ and a parabolic subgroup $(Q, \eta)$ containing $(P, \delta)$, $(P,\delta)$ is the $(\xi, X)$-canonical refinement of $(Q, \eta)$ for $x$ if and only if  \[ \tau_P^Q( H_B(\delta x)+ [s_{\delta x}\xi] -X)=1.\] Thus the inner sum of the expression (\ref{semi-stable}) equals 
\[ \sum_{\{Q|P\subseteq Q\}}  (-1)^{\dim\ago_{Q}}  \hat{\tau}_Q(H_B(\delta x) + [s_{\delta x}\xi] -X ) {\tau}_P^{Q}(H_B(\delta x)+ [s_{\delta x}\xi] -X ), \]
which is zero except if $P=G$ (\cite[Proposition 1.7.2]{LabWal}). This latter holds if and only if $x$ is $(\xi, X)$-semi-stable.
We conclude that the sum (\ref{semi-stable}) is $1$ if $x$ is $(\xi, X)$-semi-stable and is $0$ otherwise. 

Finally, the assertion about compact support can be easily deduced from the definition of $(\xi, X)$-semi-stability and \cite[Proposition 3.5.3]{LabWal} (applied for $Q=G$). 
\end{proof}

\begin{prop}\label{FP=FM}
Let $Q$ be a standard parabolic subgroup of $G$ and $x\in G(\AAA)$. 
Let $x=nmk$ be an Iwasawa decomposition of $x\in G(\AAA)$ with $n\in N_Q(\AAA)$, $m\in M_Q(\AAA)$ and $k\in G(\mathcal{O})$ such that  $s_{k}\in W$ has minimal length among all such Iwasawa decompositions of $x$. 
By identifying $\ago_B$ with $\ago_{B\cap M_Q}$, one has \[ F^{Q, \xi,X}(x)=F^{M_Q, s_k{\xi}, X}(m). \]
\end{prop}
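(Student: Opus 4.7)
The plan is to match the two sums defining $F^{Q,\xi,X}(x)$ and $F^{M_Q,s_k\xi,X}(m)$ term by term. First, the standard parabolics $P$ of $G$ with $B\subseteq P\subseteq Q$ are in bijection with the standard parabolics $P'$ of $M_Q$ containing $B\cap M_Q$, via $P\mapsto P\cap M_Q$ and $P'\mapsto P'\cdot N_Q$. Since $P\supseteq N_Q$ and $Q=M_Q\cdot N_Q$, one has $P(F)\backslash Q(F)=P'(F)\backslash M_Q(F)$. Under the canonical identification $\ago_B=\ago_{B\cap M_Q}$ (both are the dual of $X^*(T)_F\otimes\mathbb{R}$, since $X^*(B)_F=X^*(T)_F=X^*(B\cap M_Q)_F$), we have $\ago_P^Q=\ago_{P'}^{M_Q}$ and $\hat{\tau}_P^Q=\hat{\tau}_{P'}^{M_Q}$. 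So the index sets match.

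Since $\hat{\tau}_P^Q$ is invariant under translations by $\ago_Q$, it suffices to show that for every $\delta\in M_Q(F)$, the equality
\[
H_B(\delta x)+[s_{\delta x}\xi]\;\equiv\; H_{B\cap M_Q}(\delta m)+[s_{\delta m}(s_k\xi)]\pmod{\ago_Q}
\]
holds in $\ago_B=\ago_{B\cap M_Q}$, where $s_{\delta m}\in W^Q_\infty$ denotes the Weyl element attached to $\delta m$ via Iwasawa in $M_Q$. Since $\delta\in M_Q(F)$ normalizes $N_Q$, one can write $\delta x=(\delta n\delta^{-1})(\delta m)k$ with $\delta n\delta^{-1}\in N_Q(\AAA)$. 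Performing an Iwasawa decomposition of $\delta m$ inside $M_Q$, say $\delta m=pk_1$ with $p\in(B\cap M_Q)(\AAA)$ and $k_1\in M_Q(\ooo)$, one gets $\delta x=[(\delta n\delta^{-1})p](k_1k)$, which is an Iwasawa decomposition of $\delta x$ with respect to $B$. Because every character in $X^*(B)_F$ is trivial on $N_B\supseteq N_Q$, we immediately obtain $H_B(\delta x)=H_B(p)=H_{B\cap M_Q}(\delta m)$ under the identification above, so the $H_B$-part is settled.

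The main obstacle is the identification $s_{\delta x}=s_{\delta m}\cdot s_k$, which will give $[s_{\delta x}\xi]=[s_{\delta m}(s_k\xi)]$ (the projection $[\cdot]$ from $\ago_{B,\infty}$ to $\ago_B$ is the same as the one for $M_Q$ by the above remark on tori). By the Iwasawa decomposition of $\delta x$ computed above, $s_{\delta x}$ is the class of $(k_1k)_\infty$ in $\mathcal{I}_\infty\backslash G(\ooo_\infty)/\mathcal{I}_\infty$. Using $\mathcal{I}_\infty^{M_Q}=M_Q(\ooo_\infty)\cap\mathcal{I}_\infty\subseteq\mathcal{I}_\infty$, we have $(k_1)_\infty\in\mathcal{I}_\infty\tilde{s}_{\delta m}\mathcal{I}_\infty$ with $s_{\delta m}\in W^Q_\infty$, and $k_\infty\in\mathcal{I}_\infty\tilde{s}_k\mathcal{I}_\infty$. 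By the classical Iwahori--Hecke identity, $\mathcal{I}_\infty u\mathcal{I}_\infty\cdot\mathcal{I}_\infty v\mathcal{I}_\infty=\mathcal{I}_\infty uv\mathcal{I}_\infty$ whenever the lengths satisfy $\ell(uv)=\ell(u)+\ell(v)$. Here, the minimal-length hypothesis on $s_k$ forces $s_k$ to be a minimal length representative of its coset in $W^Q_\infty\backslash W_\infty$; standard Coxeter-theoretic facts then give $\ell(w\cdot s_k)=\ell(w)+\ell(s_k)$ for every $w\in W^Q_\infty$, applied in particular to $w=s_{\delta m}$. Thus $(k_1k)_\infty\in\mathcal{I}_\infty\widetilde{s_{\delta m}s_k}\mathcal{I}_\infty$, so $s_{\delta x}=s_{\delta m}s_k$. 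Putting everything together and summing over $\delta$ and $P'$ yields the claimed equality $F^{Q,\xi,X}(x)=F^{M_Q,s_k\xi,X}(m)$.
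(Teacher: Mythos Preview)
Your proof is correct and follows essentially the same route as the paper's: both match the index sets via $P\mapsto P\cap M_Q$, both reduce to showing $s_{\delta x}=s_{\delta m}\,s_k$, and both deduce this from length additivity $\ell(s_{\delta m}s_k)=\ell(s_{\delta m})+\ell(s_k)$, which holds because $s_k$ is the minimal-length representative of its coset in $W^Q_\infty\backslash W_\infty$. The paper cites \cite[IV, \S2, Corollaire 1]{Bourbaki} and \cite[Lemme 1.3.3]{LabWal} for exactly the Iwahori--Hecke/Coxeter facts you invoke. One small point both you and the paper leave implicit: the passage from ``$s_k$ has minimal length among all Iwasawa decompositions of $x$'' to ``$s_k$ is minimal in $W^Q_\infty s_k$'' uses that varying the decomposition replaces $k$ by $qk$ with $q\in Q(\ooo)$, and that $s_{qk}$ then ranges exactly over the coset $W^Q_\infty s_k$ (surjectivity by lifting Bruhat representatives from $Q(\kappa_\infty)$ to $Q(\ooo_\infty)$).
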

\begin{proof}
As $P(F)\backslash Q(F)$ is in bijection with $(P\cap M_Q)(F)\backslash M_Q(F)$, and the map $P\mapsto P\cap M_Q$ defines a bijection between the set $\{P|  B\subseteq P\subseteq Q\}$ and the set of standard parabolic subgroups of $M_Q$ (with the fixed Borel subgroup being $B\cap M_Q$). 
By identifying $\ago_B$ with $\ago_{B\cap M_Q}$, we have
\begin{align*}F^{Q, \xi, X}(x)&=\sum_{\{P|  B\subseteq P\subseteq Q\}}(-1)^{\dim\ago^Q_P}  \sum_{\delta\in P(F)\backslash Q(F)} \hat{\tau}_P^{Q}(H_B(\delta x)+ [s_{\delta x}\xi] -X )\\
&= \sum_{\{R\in \mathcal{P}^{M_Q}(B\cap M_Q)\}}(-1)^{\dim\ago_P^{M_Q}}  \sum_{\delta\in R(F)\backslash M_Q(F)} \hat{\tau}^{M_Q}_R(H_B(\delta m)+ [s_{\delta x}\xi] -X  ).
\end{align*}

For any $R\in \mathcal{P}^{M_Q}(B\cap M_Q)$ and $\delta\in M_Q(F)$, let $\delta m=b_0k_0$ with $b_0\in (B\cap M_Q)(\AAA)$ and $k_0\in M_Q(\mathcal{O})$. Then $\delta x= \delta n \delta^{-1} b_0k_0k$. As $\delta n \delta^{-1} b_0\in B(\AAA)$, we have $s_{\delta x}=s_{k_0k}$ and $s_{\delta m}=s_{k_0}$. Note that $s_{k}$ has minimal length in $W^{Q}s_k$, so $s_{k_0k}=s_{k_0}s_k$ by Corollaire 1, Chapter IV $§2$ of \cite{Bourbaki} and Lemme 1.3.3 of \cite{LabWal}. By definition, we obtain $F^{Q,\xi,X}(x)=F^{M_P, s_k\xi, X}(m).$
\end{proof}



\section{A relation between Jordan-Chevalley decomposition and Levi decomposition}
The hypothesis \((\ast)_G\) or \((\ast)_\ggg\) in \ref{chara} is assumed to hold following the group case or the Lie algebra case respectively. 

\subsection{}
The following lemma is well known and will be used without further mention in this section.  
\begin{lemm}
Let $P$ be a parabolic subgroup of $G$. Let $\gamma\in P(F)$ be a semi-simple element, then $N_{P,\gamma}$ is connected. 
\end{lemm}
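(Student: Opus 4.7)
The plan is to reduce to a situation where $\gamma$ lies in a maximal torus of $P$, and then to exhibit $N_{P,\gamma}$ explicitly as a product of root subgroups.

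After extending scalars to $\bar F$ (which is harmless, since connectedness of a finite-type group scheme can be checked on the geometric fibre), I observe that $\gamma$, being semisimple in $G$, is also semisimple in the connected linear algebraic group $P$, because semi-simplicity of an element is intrinsic and compatible with the inclusion $P\hookrightarrow G$. Since every semisimple element of a connected linear algebraic group lies in some maximal torus, $\gamma$ is contained in a maximal torus $T_0$ of $P$, which is automatically a maximal torus of $G$.

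With respect to $T_0$, the unipotent radical $N_P$ admits the usual root decomposition: for any chosen ordering of $\Phi(N_P,T_0)$, the multiplication map $\prod_{\alpha}U_\alpha\longrightarrow N_P$ is an isomorphism of varieties, where $U_\alpha\cong\mathbb{G}_a$ denotes the root subgroup. The conjugation action of $\gamma\in T_0$ on $U_\alpha$ is multiplication by the scalar $\alpha(\gamma)$, so on the product decomposition $\gamma$ acts coordinate-wise via $(x_\alpha)\mapsto(\alpha(\gamma)x_\alpha)$.

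Set $\Psi=\{\alpha\in\Phi(N_P,T_0)\mid \alpha(\gamma)=1\}$. It follows from the previous paragraph that $N_{P,\gamma}$ is cut out inside $N_P$ by the equations $x_\alpha=0$ for $\alpha\notin\Psi$. Since $\alpha(\gamma)\beta(\gamma)=(\alpha+\beta)(\gamma)$, the set $\Psi$ is closed under root addition (whenever the sum is itself a root), so by the Chevalley commutator formula $\prod_{\alpha\in\Psi}U_\alpha$ is a closed subgroup of $N_P$, isomorphic as a variety to an affine space of dimension $|\Psi|$. Being isomorphic to affine space, it is connected. The only subtle point in the argument is the existence of the torus $T_0$ containing $\gamma$; the rest is a routine consequence of the structure theory of reductive groups.
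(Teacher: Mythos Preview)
Your argument is correct and follows essentially the same approach as the paper: pass to the algebraic closure, place $\gamma$ in a maximal torus of $P$, and identify $N_{P,\gamma}$ with the product of the root subgroups $U_\alpha$ for $\alpha(\gamma)=1$. The paper states this in a single sentence, while you spell out why the relevant set of roots is closed and why the resulting product is a connected subgroup; the content is the same.
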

\begin{proof}
We can suppose that we're over an algebraic closed field, and suppose $\gamma$ lies in a maximal torus $S$ of $P$, then $N_{P,\gamma}$ is generated by the connected  groups $U_{\alpha}$ for all $\alpha\in \Phi(N_P,S)_{\bar{F}}$ such that $\alpha(\gamma)=1$. 
\end{proof}

The number field case of the following proposition is due to J. Arthur (\cite[Lemma 3.1]{Ageom}). Our statement and strategy of proof is adapted from \cite[Lemma 2.3.]{ChauLie} where a Lie algebra version is proved. Note that our lemma \ref{missing} is used implicitly in the proof of \cite[Lemma 2.3.]{ChauLie}. 
Over a function field, one needs to be more careful with unipotent elements, unipotent subgroups and smoothness properties. 
The third case of the following result is not needed in this article, but will be used in \cite{Yu}. We include it here as it does not increase too much the length of the proof. 
\begin{prop}[Arthur, Chaudouard]\label{unipotent}
Suppose that $P$ is a standard parabolic subgroup of $G$ with standard Levi subgroup $M$ and unipotent radical $N$. Let $\sigma\in M(F)$. 
Suppose that $\sigma$ admits Jordan-Chevalley decomposition with semi-simple part $\sigma_s$ and unipotent part $\sigma_u$.

Let $A$ be one of the following cases: \\
(1) $A=F$;\\
 (2) $A=\mathbb{A}$; \\
(3) In case that $\sigma_s\in M(\mathbb{F}_q)$, we also allow
$A=\mathcal{O}$, the ring of integral adèles. 

 Fixing a system of representatives $\Delta$ for $N_{\sigma_s}(A)\backslash N(A)$ (note that in the third case $N_{\sigma_s}$ is defined over $\mathbb{F}_q$, hence $N_{\sigma_s}(A)$ makes sense),
then for any $x\in N(A)$, there is a unique pair $(n, u)\in \Delta\times N_{\sigma_s}(A)$ such that 
 \[ x=\sigma^{-1}n^{-1}\sigma u n .   \]

In the Lie algebra case, suppose that $\sigma\in \mmm(F)$. Let $A$ be either the case (1) or (2) above or $A=\mathcal{O}$ if $\sigma_s\in \mmm(\mathbb{F}_q)$. 
Fixing a system of representatives $\Delta$ for $N_{\sigma_s}(A)\backslash N(A)$,
then for any $x\in \nnn(A)$, there is a unique pair $(n, u)\in \Delta\times \nnn_{\sigma_s}(A)$ such that 
 \[ x= \Ad(n^{-1})(\sigma+u)-\sigma.   \]

\end{prop}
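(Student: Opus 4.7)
The plan is to reformulate the identity as the bijectivity of a natural map of adelic/local points, and then to prove it by induction on the nilpotency class of $N$.

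\emph{Reformulation.} In the group case, left-multiplying $x = \sigma^{-1}n^{-1}\sigma u n$ by $\sigma$ yields $\sigma x = n^{-1}(\sigma u)n$. Since $\sigma_u$ commutes with $\sigma_s$ we have $\sigma_u \in N_{\sigma_s}(F)$, so as $u$ ranges over $N_{\sigma_s}(A)$ the product $\sigma u = \sigma_s(\sigma_u u)$ ranges over $\sigma_s N_{\sigma_s}(A)$. The assertion is therefore equivalent to the bijectivity of the conjugation map
\[
(N_{\sigma_s}(A)\backslash N(A)) \times \sigma_s N_{\sigma_s}(A) \longrightarrow \sigma_s N(A), \qquad (\bar{n},\tau) \longmapsto n^{-1}\tau n.
\]
The Lie algebra identity $x = \mathrm{Ad}(n^{-1})(\sigma+u)-\sigma$ rewrites as $\mathrm{Ad}(n)(\sigma+x) = \sigma + u$, and is equivalent to bijectivity of
\[
(N_{\sigma_s}(A)\backslash N(A)) \times (\sigma+\nnn_{\sigma_s}(A)) \longrightarrow \sigma + \nnn(A), \qquad (\bar{n},Y)\longmapsto \mathrm{Ad}(n^{-1})Y.
\]

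\emph{Filtration and base case.} Fix a decreasing filtration $N = N^{1} \supset N^{2} \supset \cdots \supset N^{r+1} = 1$ by closed $P$-stable normal subgroups with each quotient $V_i := N^{i}/N^{i+1}$ a commutative vector group (the descending central series provides one). Under the hypothesis on the characteristic the semisimple element $\sigma_s$ acts semisimply on each $V_i$, yielding an $\mathbb{F}_q$-rational splitting $V_i = V_i^{\sigma_s} \oplus V_i^{\circ}$, where $V_i^{\circ}$ is the image of the Lang-type morphism $v \mapsto \sigma_s v \sigma_s^{-1} v^{-1}$ (equivalently, of $\mathrm{Ad}(\sigma_s)-\mathrm{id}$ after linearisation). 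In the base case $r=1$, passage to additive notation in $V_1$ turns the equation $\sigma x = n^{-1}(\sigma u)n$ into a linear equation of the form $\bar{x} = (\mathrm{Ad}(\sigma_s)-\mathrm{id})\bar{n} + \bar{u}$ on $V_1(A)$. The direct-sum decomposition yields a unique solution with $\bar{u} \in V_1^{\sigma_s}(A)$ and $\bar{n}$ determined in $V_1(A)/V_1^{\sigma_s}(A)$.

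\emph{Inductive step and the integral case.} For general $r$, given $x \in N(A)$ project to $V_1 = N/N^{2}$ and apply the base case to determine uniquely the class of $n$ in $N(A)/N_{\sigma_s}(A)N^{2}(A)$ and that of $u$ in $N_{\sigma_s}(A)/(N_{\sigma_s}(A)\cap N^{2}(A))$; choose lifts $(n_0,u_0)$. The remaining correction lives in $N^{2}(A)$, and the problem of determining it is of the same form but for the parabolic $M \ltimes N^{2}$, whose unipotent radical has nilpotency class $r-1$; the induction hypothesis concludes. For the case $A = \mathcal{O}$, the hypothesis $\sigma_s \in M(\mathbb{F}_q)$ makes the filtration, the splittings $V_i = V_i^{\sigma_s}\oplus V_i^{\circ}$, and a section $\Delta$ of $N_{\sigma_s}\backslash N$ all defined over $\mathbb{F}_q$; combined with the smoothness and connectedness of $N_{\sigma_s}$ from the preceding lemma, the bijection descends to $\mathcal{O}$-points.

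\emph{Main obstacle.} The delicate point is to secure, in positive characteristic, a $P$-stable filtration with commutative vector-group quotients over $\mathbb{F}_q$ on each of which $\sigma_s$ acts diagonalisably. Without the hypothesis $(\ast)_G$ (resp. $(\ast)_\ggg$), unipotent groups can be pathological (e.g., containing Witt-vector-type layers) and this splitting fails; the assumptions are precisely calibrated to prevent such pathology and to guarantee that the order of $\sigma_s$ is prime to $p$ on each layer, forcing semisimple action. A secondary subtlety is uniqueness of $u$ (not merely of $\bar n$): this requires the $N(A)$-stabiliser of an element of $\sigma_s N_{\sigma_s}(A)$ under conjugation to be exactly $N_{\sigma_s}(A)$, which uses the connectedness of $N_{\sigma_s}$ from the preceding lemma.
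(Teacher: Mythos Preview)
Your overall architecture (filtration of $N$ by normal subgroups with abelian quotients, linearise on each graded piece, induct) is the same as the paper's, but your execution contains a substantive error that the paper's construction is specifically designed to avoid.

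\textbf{The reformulation is wrong.} You write ``since $\sigma_u$ commutes with $\sigma_s$ we have $\sigma_u\in N_{\sigma_s}(F)$''. This is false: $\sigma_u$ lies in $M(F)$, not in $N(F)$, and $N_{\sigma_s}$ denotes the centraliser of $\sigma_s$ \emph{in $N$}. So $\sigma u=\sigma_s(\sigma_u u)$ does not range over $\sigma_s N_{\sigma_s}(A)$. Consequently your linearised equation $\bar x=(\mathrm{Ad}(\sigma_s)-\mathrm{id})\bar n+\bar u$ on $V_1=N/N^2$ is also incorrect: the conjugation is by $\sigma$, not by $\sigma_s$, and in general $\mathrm{Ad}(\sigma_u)$ acts nontrivially on $V_1$ (take $G=GL_4$, $M=GL_2\times GL_2$, $\sigma_u$ a nontrivial unipotent in the first block).

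\textbf{This is exactly why the paper does not use the descending central series.} The paper first invokes Gille's theorem to place $\sigma_u$ in the unipotent radical of a parabolic $Q\subseteq M$, sets $R=QN$, and then filters $N$ by intersecting with the cocharacter filtration of $N_R$. The point of this construction is Lemma~\ref{filtration}(2): for this filtration one has $\sigma_u^{-1}n_i^{-1}\sigma_u n_i\in N_{i+1}$, i.e.\ $\sigma_u$ \emph{does} act trivially on each graded piece. Only then does the induction reduce cleanly to an isomorphism $\Phi_k:y\mapsto(\sigma_s^{-1}y^{-1}\sigma_s)y$ on the abelian quotient, which is what you wanted to write. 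With the descending central series of $N$ there is no reason for $\sigma_u$ to disappear at the graded level, and your induction does not close.

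A secondary point you gloss over is the passage from algebraic-group quotients to quotients of $A$-points: the identification $N_{\sigma_s}(A)N_{k+1}(A)\backslash N_{\sigma_s}(A)N_k(A)\cong (N_{k,\sigma_s}N_{k+1}\backslash N_k)(A)$ requires the vanishing-of-$H^1$ argument in the paper's Lemma~\ref{missing}, using that the relevant unipotent groups are $F$-split. This is needed both for uniqueness at each step and for the integral case $A=\mathcal O$.
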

\begin{proof}
Note that if the lemma is true for one $\sigma$, then it's true for all elements in $M(F)$-conjugacy class of $\sigma$. Hence we can freely take an $M(F)$-conjugate of $\sigma$ if necessary. 

After Proposition \ref{Gille}, we may suppose that $\sigma_u$ belongs to the unipotent radical of a parabolic subgroup $P'$ of $G$, hence it lies in $N_{P'}(F)\cap M(F)$. Then $\sigma_u$ is contained in the unipotent radical of a parabolic subgroup $P_1=P'\cap M$ of $M$. We may suppose that  $P_1$ is a minimal parabolic subgroup of $M$ defined over $F$.  
On the other hand, there is a parabolic subgroup $Q$ of $M$ being minimal with the property that $\sigma_s\in L(F)$ for a Levi subgroup $L$ of $Q$. After conjugation, we suppose that $Q$ contains $P_1$. 

We claim that $\sigma_u$ belongs to the unipotent radical of $Q$. This is because $Q^{0}_{\sigma_s}$ is a parabolic subgroup of $M^{0}_{\sigma_s}$ with a Levi factor $M_{Q,\sigma_s}^{0}$ and with unipotent radical $N_{Q,\sigma_s}$.  As $\sigma_u\in Q^{0}_{\sigma_s}(F)$, its projection to $M_{Q, \sigma_s}^{0}(F)$ belongs to the unipotent radical of a parabolic subgroup of $M_{Q, \sigma_s}^{0}$ (cf. 3.2.Proposition (A) of \cite{Tits2}). While the derived group of $M_{Q, \sigma_s}^{0}$ is anisotropic, hence no proper $F$-parabolic subgroup lies in it. Therefore the projection of $\sigma_u$ in $M_{Q, \sigma_s}^{0}$ is  trivial. 
It implies that $\sigma_u$ belongs to the unipotent radical of $Q^{0}_{\sigma_s}$ and hence to that of $Q$.

Let $R=QN$ (which is a parabolic subgroup of $G$, see 4.4 and 4.7 of \cite{BTu}), we have $\sigma\in R(F)$, $\sigma_u\in N_{R}(F)$ and $R\subseteq P$. Let $A_R$ be the maximal split torus in the center of $M_R$. Then the Lie algebra of $N$ can be decomposed into eigenspaces under the action
of $A_R$. 
Let $\lambda\in X_*(A_R)$ be a cocharacter so that $R=P_G(\lambda):= \{g\in G\mid \lim_{t\rightarrow 0} \lambda(t)g\lambda(t)^{-1} \text{ exists}\}$. For each $i\in \mathbb{Z}$,  let $A_i\subseteq X^*(A_R)$ be the semi-group consisting of elements $\alpha\in X^*(A_R)$ such that $\langle\alpha, \lambda\rangle\geq i$. Applying \cite[3.3.6, 3.3.5]{CGP} to the semi-groups $A_i$, we deduce that $N_{R}$ admits a descending filtration $N_{R}=\tilde{N}_{1}\supseteq \tilde{N}_2\supseteq \cdots\supseteq \tilde{N}_r=\{1\}$ by $R$-conjugate stable smooth closed $F$-subgroups such that: 
\begin{center}
$uu'u^{-1}u'^{-1}\in \tilde{N}_{i+j}(F)$ for $u_i\in \tilde{N}_i(F)$ and $u_j\in \tilde{N}_j(F)$ (one sets $\tilde{N}_{k}=\{1\}$ for $k\geq r$). 
\end{center}
As $N=N_P\subseteq N_R$ which is $A_R$-stable, we obtain a descending filtration \[ N=N_{1}\supseteq N_2\supseteq \cdots\supseteq N_r=\{1\}, \] with $N_i=\tilde{N}_i\cap N$. 
\begin{lemm}\label{filtration}
The filtration constructed above  satisfies the following properties:
\begin{enumerate}
\item
each $N_i$ is a normal subgroup of $N$ and is $\sigma$ conjugate stable ;
\item
 $\sigma_u^{-1}n_i^{-1}\sigma_{u}n_i\in N_{i+1}(A)$ for any $n_i\in N_{i}(A)$. 
 \item
each $N_i$ is a connected smooth closed subgroup of $N$.
\end{enumerate}
\end{lemm}
\begin{proof}[Proof of the lemma]
(1) Because $\tilde{N}_i$ is  $R$-conjugate stable;  (2) Since $\sigma_u\in \tilde{N}_1(F)$.
(3) By Proposition 3.3.9 of \cite{CGP}. 
\end{proof}

Fix a system of representatives $\Delta_k$ for $N_{\sigma_s}(A) N_k(A)\backslash N(A)$ for each$k=1, 2, \cdots$.  
Given $x\in N(A)$, we proceed  by recurrence on $k$ for the following assertion: there exists a unique couple $(n,u)\in \Delta_k\times N_{\sigma_s}(A)N_{k}(A)$ such that \[ x=\sigma^{-1}n^{-1}\sigma u n. \]

The case that $k=1$ is trivial. Suppose that the assertion is proved for $k$, i.e., there exists a unique pair $(n_k, u_k) \in \Delta_k\times N_{\sigma_s}(A)N_{k}(A)$ such that $x=\sigma^{-1} n^{-1}_k\sigma u_k n_k$.
Let's prove the case for $k+1$. 

For unicity, let $(n,u)\in \Delta_{k+1}\times N_{\sigma_s}(A)N_{k+1}(A)$ be a couple such that $x=\sigma^{-1}n^{-1}\sigma un$. 
Let $\beta\in \Delta_{k}$ be the representative of the coset $ N_{\sigma_s}(A)N_{k}(A) n$, let $\alpha:=n\beta^{-1}\in N_{\sigma_s}(A)N_{k}(A) $. Then $n$ is uniquely determined by $\beta$ and the coset $N_{\sigma_s}(A)N_{k+1}(A)\alpha$, i.e. $\{n\}=\Delta_{k+1}\cap ( N_{\sigma_s}(A)N_{k+1}(A)\alpha\beta)$, and $u$ will be automatically unique (assuming its existence). 

We have 
\[ x=\sigma^{-1}n^{-1}\sigma u n= \sigma^{-1}   \beta^{-1}  \sigma   (\sigma^{-1} \alpha^{-1}\sigma)  ( u\alpha) \beta. \]
By our construction of $N_i$, we have $\sigma^{-1} \alpha^{-1}\sigma \in N_{\sigma_s}(A)N_k(A)$ and $u\alpha \in N_{\sigma_s}(A)N_{k}(A)$, hence $ (\sigma^{-1} \alpha^{-1}\sigma)  ( u\alpha) \in N_{\sigma_s}(A)N_{k}(A)$. 
By unicity of $(n_k, u_k)$, we have 
\[ n_k=\beta, \]
\[  u_k= (\sigma^{-1} \alpha^{-1}\sigma)  ( u\alpha).  \]
Since $\sigma^{-1}\alpha^{-1}\sigma = (\sigma_s^{-1}(\sigma_{u}^{-1} \alpha^{-1} \sigma_u \alpha)\sigma_s) (\sigma_s^{-1}\alpha^{-1}\sigma_s)$, and both $\sigma_s^{-1}(\sigma_{u}^{-1} \alpha^{-1} \sigma_u \alpha)\sigma_s$  and $\alpha^{-1}u\alpha$ belong to $N_{\sigma_s}(A)N_{k+1}(A) $, 
we have \begin{equation}\label{alpha} N_{\sigma_s}(A)N_{k+1}(A)u_k = N_{\sigma_s}(A)N_{k+1}(A) \sigma_s^{-1}\alpha^{-1}\sigma_s \alpha.  \end{equation}
We will show that this relation determines the coset $    N_{\sigma_s}(A)N_{k+1}(A)  \alpha$. 

\begin{lemm}\label{missing}
Let $A$ be one of the case in Proposition \ref{unipotent}, then
we have an isomorphism of abstract groups: 
 \[ N_{\sigma_s}(A)N_{k+1}(A) \backslash N_{\sigma_s}(A) N_{k}(A)  \cong   
 (N_{k,\sigma_s}N_{k+1}\backslash N_{k})(A)
. \]
\end{lemm}
\begin{proof}[Proof of the lemma]
We have an isomorphism of  groups: 
 \[ N_{\sigma_s}(A)N_{k+1}(A) \backslash N_{\sigma_s}(A) N_{k}(A)  \cong   N_{k, \sigma_s}(A) N_{k+1}(A) \backslash N_{k}(A). \]
By definition, $N_{k,\sigma_s}\cap N_{k+1}=N_{\sigma_s}\cap N_{k+1}=N_{k+1, \sigma_s}$.  
Due to Proposition 3.3.10 of  \cite{CGP}, $N_{k+1, \sigma_s}$ is connected and smooth. Moreover, as it admits an action by a split torus with no non-zero weight, it is $F$-split (Lemma 3.3.8 of $loc.$ $cit.$), 
i.e. it admits a composition series over $F$ whose successive quotients are $F$-isomorphic to $\mathbb{G}_a$. Then one knows (by induction on the length of the composition series) that $H^{1}(F, N_{k+1, \sigma_s})=0$ and $H^{1}(F_v, N_{k+1, \sigma_s})=0$ for any place $v$ of $F$. Since $N_k$ and $N_{k,\sigma_s}$ are smooth,  by short exact sequence
$1\rightarrow N_{k+1, \sigma_s}\rightarrow N_{k,\sigma_s}\ltimes N_{k+1}\rightarrow N_{k,\sigma_s}N_{k+1}\rightarrow 1$ and the vanishing of  $H^1$, we have (\cite[2.3.6]{Yu})
\begin{equation}
N_{k,\sigma_s}(F)N_{k+1}(F)\cong   N_{k+1, \sigma_s}(F) \backslash( N_{k,\sigma_s}(F) \ltimes N_{k+1}(F))   \cong (N_{k,\sigma_s}N_{k+1})(F),\end{equation} and 
\begin{equation}\label{Fv}
N_{k,\sigma_s}(F_v)N_{k+1}(F_v)\cong(N_{k,\sigma_s}N_{k+1})(F_v), \end{equation} for any place $v$ of $F$.  Again as $N_{k,\sigma_s}N_{k+1}$ is connected, smooth, unipotent and $F$-split  (as it's a quotient of $N_{k,\sigma_s}\ltimes N_{k+1}$), we deduce that 
\begin{equation}
(N_{k,\sigma_s}N_{k+1})(F) \backslash N_{k}(F) \cong (N_{k,\sigma_s}N_{k+1}\backslash N_{k})(F),\end{equation}
and 
\begin{equation}(N_{k,\sigma_s}N_{k+1})(F_v) \backslash N_{k}(F_v) \cong (N_{k,\sigma_s}N_{k+1}\backslash N_{k})(F_v) , \end{equation}
for any place $v$ of $F$. So the lemma is proved when $A=F$. 

Consider the case that $A=\mathbb{A}$. The algebraic group $N_{k+1, \sigma_s}$ is smooth and connected, hence geometrically integral, we deduce by I. 3.6 of \cite{Oes}, that the image of the group $N_{k,\sigma_s}(\AAA)N_{k+1}(\AAA)$ is open in $(N_{k,\sigma_s}N_{k+1})(\AAA)$. While by relation (\ref{Fv}), the image of $N_{k,\sigma_s}(\AAA)N_{k+1}(\AAA)$ is also dense in $(N_{k,\sigma_s}N_{k+1})(\AAA)$. 
It follows that \[ N_{k,\sigma_s}(\AAA)N_{k+1}(\AAA)\cong (N_{k,\sigma_s}N_{k+1})(\AAA). \]
Similarly, \[ (N_{k,\sigma_s}N_{k+1})(F_v) \backslash N_{k}(F_v) \cong (N_{k,\sigma_s}N_{k+1}\backslash N_{k})(F_v). \]
This finishes the proof of this case. 

When $\sigma_s\in M(\mathbb{F}_q)$, let's treat the case that $A=\mathcal{O}$. Now the exact sequence $1\rightarrow N_{k+1, \sigma_s}\rightarrow N_{k,\sigma_s}\ltimes N_{k+1}\rightarrow N_{k,\sigma_s}N_{k+1}\rightarrow 1$
is defined over $\mathbb{F}_q$. By above arguments or by Lang's theorem on vanishing of $H^{1}$ for connected algebraic group over finite fields, the homomorphism 
$N_{k,\sigma_s}(\kappa_v ) \ltimes N_{k+1}(\kappa_v ) \rightarrow (N_{k,\sigma_s}N_{k+1})(\kappa_v )$
is surjective for any place $v$ of $F$. Moreover, as the morphism $N_{k,\sigma_s}\ltimes N_{k+1}\rightarrow N_{k,\sigma_s}N_{k+1}$ is smooth, in particular formally smooth, the homomorphism $N_{k, \sigma_s}(\mathcal{O}_v) \ltimes N_{k+1}(\mathcal{O}_v) \rightarrow (N_{k,\sigma_s}N_{k+1})(\mathcal{O}_v)$ is surjective too. Hence 
$N_{k, \sigma_s}(\mathcal{O}_v) N_{k+1}(\mathcal{O}_v) \cong (N_{k,\sigma_s}N_{k+1})(\mathcal{O}_v)$, for any place $v$ of $F$. We obtain \[ N_{k, \sigma_s}(\mathcal{O}) N_{k+1}(\mathcal{O}) \cong (N_{k,\sigma_s}N_{k+1})(\mathcal{O}). \]
Similarly, \[ (N_{k,\sigma_s}N_{k+1})(\mathcal{O}) \backslash N_{k}(\mathcal{O}) \cong (N_{k,\sigma_s}N_{k+1}\backslash N_{k})(\mathcal{O}).  \]
This finishes the case $A=\mathcal{O}$. \end{proof}
Now we're going to finish the proof of Proposition \ref{unipotent}.

Consider the morphism of schemes defined by: 
\begin{align*}
\Phi_{k}:  N_{k, \sigma_s} N_{k+1}\backslash N_{k} &\longrightarrow N_{k,\sigma_s}N_{k+1} \backslash N_{k}\\ 
y&\mapsto (\sigma_s^{-1} y^{-1} \sigma_s)\cdot y
\end{align*}
This is a morphism of algebraic groups as $N_{k, \sigma_s} N_{k+1}\backslash N_{k} $ is commutative. 
Let's prove that $\Phi_k$ is an isomorphism. As $N_{k, \sigma_s} N_{k+1}\backslash N_{k} $ is smooth and connected, it's sufficient to show that $\Phi_k$ induces an injection on geometric points and that the associated map of Lie algebra $\mathrm{Lie}(\Phi_k)$ is surjective.  

If $y\in (N_{k+1} \backslash N_k)(\bar{F})$ is an element such that \[ b:=(\sigma_s^{-1} y^{-1} \sigma_s)\cdot y\in (N_{k+1}\backslash N_{k+1}N_{k, \sigma_s})(\bar{F}).\]
Then $y^{-1}\sigma_s y=\sigma_s b$ is the Jordan-Chevalley decomposition of $y^{-1}\sigma_s y$  (as an element in $(N_{k+1}\backslash R)(\bar{F})$). We must have $b=1$ and $y\in (N_{k+1}\backslash N_{k+1}N_{k, \sigma_s})(\bar{F})$, which proves injectivity on geometric points. 

Now $\Phi_k$ is induced by passing to quotient of the following morphism:
\begin{align*}
\tilde{\Phi}_{k}:  N_{k+1}\backslash N_{k} &\longrightarrow  N_{k+1} \backslash N_{k}   ;\\ 
y&\longmapsto (\sigma_s^{-1} y^{-1} \sigma_s)\cdot y .
\end{align*}
We know that $\mathrm{Lie}(\tilde{\Phi}_k)= \mathrm{Id}-\mathrm{Ad}(\sigma_s)$. Let $p: \mathrm{Lie}(N_{k+1}\backslash N_k) \rightarrow \mathrm{Lie}(N_{k+1}N_{k,\sigma_s}\backslash N_k)$ be the projection, then $\mathrm{ker}(p)=\mathrm{Lie}(N_{k+1}\backslash N_{k+1}N_{k, \sigma_s}) $.
While we also have $\mathrm{ker}(\mathrm{Lie}(\tilde{\Phi}_k))\cong \mathrm{Lie}({\mathrm{ker}(\tilde{\Phi}_k) )}=\mathrm{Lie}(N_{k+1}\backslash N_{k+1}N_{k, \sigma_s})=\mathrm{ker}(p)$. 
Since $\mathrm{Lie}(\tilde{\Phi}_k)$ is a semi-simple endomorphism, 
\[ \mathrm{Lie}(N_{k+1}\backslash N_k)  =\mathrm{ker}(\mathrm{Lie}(\tilde{\Phi}_k))\oplus\mathrm{Im}(\mathrm{Lie}(\tilde{\Phi}_k)). \]
It follows that the composition $p\circ \mathrm{Lie}(\tilde{\Phi}_k)$ is surjective. We conclude then that $\mathrm{Lie}(\Phi_k)$ is surjective.

Now $\Phi_k$ is known to be an isomorphism, combining with above lemma, we conclude that the coset $N_{\sigma_s}(A)N_{k+1}(A) \sigma_s^{-1}\alpha^{-1}\sigma_s \alpha$ is uniquely determined by equality (\ref{alpha}). This finishes the proof of the uniqueness part of the induction.

For existence part, we see by the above argument that there is an $\alpha\in N_{\sigma_s}(A)N_k(A)$ such that the equality (\ref{alpha}) is satisfied. Let $\beta=n_k$,  $n$ be the unique element in $\Delta_{k+1}\cap  ( N_{\sigma_s}(A)N_{k+1}(A)\alpha\beta)$ and $u= \sigma^{-1} n\sigma xn^{-1}$. Using induction hypothesis, we have $u=\sigma^{-1} nn_k^{-1} \sigma u_k n_k n^{-1}$, we see that $u\in N_{\sigma_s}(A)N_{k}(A)$.  
\end{proof}

\begin{coro}\label{descent}
An element $p\in P(F)$  (resp. $\ppp(F)$),  with $P$ a standard parabolic subgroup of $G$, admits Jordan-Chevalley decomposition if and only if its Levi factor does, and in this case their semi-simple parts are $N_P(F)$-conjugate.   

In particular, let $o\in \mathcal{E}$, then for any pair of standard parabolic subgroup $P\subseteq Q$, we have 
\[ o\cap M_Q(F)\cap P(F) = (o\cap M_P(F)) N_P^Q(F), \]
in the group case
and 
\[ o\cap \mmm_Q(F)\cap \ppp(F) = (o\cap \mmm_P(F)) + \nnn_P^Q(F), \]
in the Lie algebra case.  
\end{coro}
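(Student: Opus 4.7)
The plan is to combine Proposition~\ref{unipotent} with the Levi decomposition and the fact that Jordan-Chevalley decomposition is preserved by morphisms of linear algebraic groups. Write $p=mn$ with $m\in M_P(F)$ and $n\in N_P(F)$ using the Levi decomposition of $P$.

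The easier direction is that a Jordan-Chevalley decomposition $p=p_sp_u$ of $p$ in $G(F)$ (where necessarily $p_s,p_u\in P(F)$, since parabolic subgroups are self-normalizing) descends through the quotient $\pi\colon P\to M_P$: the image $\pi(p_s)$ is semi-simple, $\pi(p_u)$ is unipotent, they commute, and their product is $m$, so $m$ admits a Jordan-Chevalley decomposition over $F$.

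For the converse, suppose $m=m_sm_u$ is a Jordan-Chevalley decomposition of $m$ over $F$. Apply Proposition~\ref{unipotent} with $\sigma=m$ and $A=F$ to the element $n\in N_P(F)$: there exist $\nu\in N_P(F)$ and $u\in N_{P,m_s}(F)$ with $n=m^{-1}\nu^{-1}mu\nu$, whence
\[
\nu p\nu^{-1}\;=\;mu\;=\;m_s\cdot (m_uu).
\]
By construction $u$ commutes with $m_s$ (as $u\in N_{P,m_s}$), and $m_u$ commutes with $m_s$ by hypothesis. It remains to show that $m_uu$ is unipotent: it lies in the parabolic subgroup $P\cap C_G(m_s)^0$ of the reductive group $C_G(m_s)^0$, whose Levi factor is $(M_P\cap C_G(m_s))^0$ and whose unipotent radical is $N_{P,m_s}$, and its image in the Levi equals the unipotent element $m_u$; hence the semi-simple part of $m_uu$ projects to $1$ and therefore lies in $N_{P,m_s}$, which forces it to be trivial. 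Thus $\nu p\nu^{-1}=m_s\cdot(m_uu)$ is a Jordan-Chevalley decomposition in $P(F)$, and $p_s=\nu^{-1}m_s\nu\in P(F)$ is $N_P(F)$-conjugate to $m_s$. The Lie algebra statement is proved identically using the second half of Proposition~\ref{unipotent}.

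The ``in particular'' assertion is then an immediate application: for standard parabolic subgroups $P\subseteq Q$, invoke the equivalence just proved for the parabolic subgroup $P\cap M_Q$ of $M_Q$, whose Levi is $M_P$ and whose unipotent radical is $N_P^Q=N_P\cap M_Q$. Any $q\in M_Q(F)\cap P(F)$ has Levi decomposition $q=m'n'$ with $m'\in M_P(F)$ and $n'\in N_P^Q(F)$, and $q$ admits a Jordan-Chevalley decomposition iff $m'$ does, with their semi-simple parts being $N_P^Q(F)$-conjugate and hence $G(F)$-conjugate. Consequently the conditions defining a class $o\in\mathcal{E}$---either failing to admit Jordan-Chevalley decomposition, or being $G(F)$-conjugate in the semi-simple part to a fixed reference element---hold for $q$ iff they hold for $m'$, with $n'\in N_P^Q(F)$ arbitrary, yielding the claimed set equality. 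The main obstacle is the unipotency of the product $m_uu$, resolved by the descent argument through the Levi of $P\cap C_G(m_s)^0$.
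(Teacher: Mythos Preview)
Your proof is correct and follows the same approach as the paper: the quotient $P\to M_P$ handles one direction, and Proposition~\ref{unipotent} supplies the $N_P(F)$-conjugation for the other. In fact you give more detail than the paper on why $m_uu$ is unipotent---note that you can simplify that step by projecting via $P\to M_P$ (working over $\bar F$) rather than passing through $C_G(m_s)^0$, which avoids having to invoke the parabolic structure of the centralizer.
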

\begin{proof}
We prove the group case as the Lie algebra case can be proved by the same arguments. 

Let $p\in P(F)$, and $p=\sigma x$ be the Levi decomposition of $p$ with $\sigma\in M_P(F)$ and $x\in N_P(F)$. It amounts to prove that $\sigma$ lies in the equivalence class of $p$ in $\mathcal{E}$. 

If $\sigma$ admits Jordan-Chevalley decomposition with semi-simple part $\sigma_s$, then by Proposition \ref{unipotent}, we can write $p=n^{-1}\sigma u n$ with $u\in N_{P,\sigma_s}(F)$ and $n\in N_{P}(F)$. Therefore $p$ must also have Jordan-decomposition with semi-simple part $n^{-1}\sigma_s n$ and we're done. 

If $\sigma$ does not admit Jordan-Chevalley decomposition, we need to prove that nor does $p$. But this is clear since  $M_P$ is a Levi subgroup of $P$ means that $M_P\cong P/N_P$.  If $p$ has a Jordan-Chevalley decomposition, then its image under the quotient map gives a Jordan-Chevalley decomposition of $\sigma$. 
\end{proof}

\section{Proof of the Theorem \ref{Main}  and the Theorem \ref{polynomial}} \label{proofm}
\subsection{Proof of the Theorem \ref{Main}}
The proof follows basically from the same strategy of \cite[Proposition 11, p.227]{Laf}. 
We need a lemma first. 
\begin{lemm}\label{invariance}
Let $f\in {C}_c^{\infty}(G(\AAA))$ or $f\in {C}_c^{\infty}(\ggg(\AAA))$ depending on the case we're dealing with. There is a constant $c$ depending  on $f$ and $\xi$, such that: for a couple of standard parabolic subgroups  $P\subseteq Q$, a vector $X\in \ago_B$ with $d(X)> c$, then for any element $x\in G(\mathbb{A})$ satisfying  \[ F^{P,\xi, X}(x)\tau_{P}^{Q}(H_B(x)+[s_{x}\xi]-X)=1, \]
we have 
\[ k_{P, o}(x)=k_{Q,o}(x), \]
or for the Lie algebra case  \[ \kkk_{P, o}(x)=\kkk_{Q,o}(x).  \]
\end{lemm}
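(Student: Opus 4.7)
The plan is to transform $k_{Q,o}(x)$ into $k_{P,o}(x)$ in two stages. First, restrict the sum over $\sigma\in M_Q(F)\cap o$ to those $\sigma$ lying in $P(F)$ by a reduction-theory vanishing argument; second, combine the resulting sum over $N_P^Q(F)$ with the integral over $N_Q(\AAA)$ into an integral over $N_P(\AAA)$ via Fubini. Both stages use the regularity of $x$ provided by the hypothesis, together with the compactness of the support of $f$.

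For the first stage, the claim is that for $d(X)$ larger than a threshold $c=c(f,\xi)$, one has $f(ax^{-1}\sigma nx)=0$ for every $a\in \Xi_G$ and $n\in N_Q(\AAA)$ whenever $\sigma\in M_Q(F)\cap o$ lies outside $P(F)$. This is a Siegel-set style estimate: by Proposition \ref{FP=FM}, $F^{P,\xi,X}(x)=1$ keeps the $M_P$-component of $x$ in a semi-stable region of $M_P$, while $\tau_P^Q(H_B(x)+[s_x\xi]-X)=1$ drives the $A_P^Q$-coordinate of $x$ deep into the positive chamber as $d(X)$ grows. Conjugation by $x$ then pushes any non-$P$ element of $M_Q(F)$ outside any fixed compact. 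With this restriction in hand, Corollary \ref{descent} supplies the unique factorization $\sigma=\gamma v$ with $\gamma\in M_P(F)\cap o$ and $v\in N_P^Q(F)$, giving
\[ k_{Q,o}(x) = \sum_{a\in\Xi_G}\sum_{\gamma\in M_P(F)\cap o}\sum_{v\in N_P^Q(F)}\int_{N_Q(\AAA)}f(ax^{-1}\gamma vnx)\,dn. \]

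For the second stage, I would establish
\[ \sum_{v\in N_P^Q(F)}\int_{N_Q(\AAA)}f(ax^{-1}\gamma vnx)\,dn = \int_{N_P(\AAA)}f(ax^{-1}\gamma n'x)\,dn'. \]
Using the product structure $N_P(\AAA)=N_P^Q(\AAA)N_Q(\AAA)$ together with the normalization $\vol(N_P^Q(F)\backslash N_P^Q(\AAA))=1$, this reduces to showing that for a compact fundamental domain $D\subset N_P^Q(\AAA)$ of $N_P^Q(F)$ the function $u\mapsto f(ax^{-1}\gamma vunx)$ is constant on $D$ for each fixed $a,\gamma,v,n$. Smoothness of $f$ supplies a compact open subgroup $K\subset G(\AAA)$ under which $f$ is right-invariant; the contraction property of $x$ in the $A_P^Q$-chamber then yields $x^{-1}Dx\subset K$ once $d(X)$ exceeds a second threshold. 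Enlarging $c$ to absorb both bounds, the invariance holds, Fubini applies, and $k_{P,o}(x)=k_{Q,o}(x)$ follows. The Lie algebra case proceeds in exact parallel, with the additive Levi factorization of Corollary \ref{descent} replacing the multiplicative one.

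The main obstacle is the first stage: the Siegel-set estimate must be quantitative enough to produce an explicit threshold $c$ depending on $f$ and $\xi$. The presence of $[s_x\xi]$ in every truncation inequality introduces a bounded but variable shift in all the Harish-Chandra coordinates, and this shift must be tracked through each Bruhat/Iwasawa estimate used to push non-$P$ conjugates out of $\mathrm{supp}(f)$. This follows the strategy of \cite[Proposition 11, p.227]{Laf} for $GL_n$, extended here using the combinatorial preparations of Section \ref{u} to accommodate non-split reductive groups and the $\xi$-perturbation.
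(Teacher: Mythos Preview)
Your proposal follows essentially the same two-stage strategy as the paper's proof: first force $\sigma$ into $P(F)$ by a reduction-theory argument, then collapse the $N_P^Q$-sum and $N_Q$-integral into a single $N_P$-integral using contraction by $x$ and the smoothness of $f$. The paper organizes stage one slightly differently, first rewriting $k_{Q,o}(x)$ as an integral over the compact fundamental domain $N_Q(F)\backslash N_Q(\AAA)$ with a sum over $(M_Q(F)\cap o)N_Q(F)$, but this is cosmetic: your version, projecting $x^{-1}\sigma n x$ to $M_Q(\AAA)$ to eliminate the $n$-dependence, reaches the same conclusion and then invokes the same Corollary~\ref{descent}.

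Two points deserve tightening. First, your pointwise claim in stage two that $u\mapsto f(ax^{-1}\gamma v u n x)$ is constant on $D$ for each \emph{fixed} $n\in N_Q(\AAA)$ is not quite what right-invariance gives: the difference between $\gamma v u n x$ and $\gamma v n x$ is $x^{-1}n^{-1}unx$, and with $n$ unbounded this need not lie in $K$. What does hold is that the \emph{integral} over $N_Q(\AAA)$ is independent of $u\in D$: commute $u$ past $n$ (using that $N_Q$ is normal in $N_P$) to write $\gamma v u n = \gamma v (unu^{-1}) u$, absorb $x^{-1}ux\in K$ by right-invariance, then change variable $n\mapsto u^{-1}nu$. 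Second, the assertion ``$x^{-1}Dx\subset K$ once $d(X)$ is large'' is not automatic globally: the Harish-Chandra coordinate $H_B(a)$ being deep in the chamber does not by itself shrink $a^{-1}Da$ at every place. The paper handles this by invoking weak approximation to find $a_f\in T(F)$ with $(aa_f)^{-1}D(aa_f)\subset K'$, and then absorbing $a_f$ into the left-$P(F)$-freedom already used in stage one. You should include this step.
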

\begin{proof}
We prove the group case as the Lie algebra case is similar.  


Note that we have 
\[ \sum_{\gamma\in M_Q(F)\cap o}  \int_{ N_Q(\AAA)} f(x^{-1}\gamma nx)\d n =\int_{N_Q(F)\backslash N_Q(\AAA)}  \sum_{\gamma\in (M_Q(F)\cap o)N_Q(F) }   f(x^{-1}\gamma nx)\d n.  \]
After modifying $x$ from left by an element in $P(F)$, which doesn't affect the statement of the lemma, we may write using Iwasawa decomposition that $x=bak$ with $b\in N_B(\AAA)$ lying in a fixed compact subset, $a\in T(\AAA)$, and $k\in G(\ooo)$. 
The condition \[ F^{P,\xi, X}(x)\tau_{P}^{Q}(H_B(x)+[s_{x}\xi]-X)=1\] implies that (\cite[1.2.7]{LabWal})
\begin{equation} \label{ineq1}  \alpha(H_B(a)-X)>c', \quad \forall \alpha\in \Delta_B^Q -\Delta_B^{P},\end{equation}
where the constant $c'$ depends only $\xi$. 
Taking a $X_G\in \ago_B$ such that $-X_G$ is deep enough in the positive chamber (depending on the choice of a Siegel domain), then we may suppose, after changing $x$ from left by an element in $P(F)$ if necessary,  that $\alpha(H_B(a)-X_G)>0$, for all $\alpha\in \Delta_B^P$. Since $\alpha(X-X_G)>0$ for all $\alpha\in \Delta_B$, we have 
\begin{equation} \label{ineq2}  \alpha(H_B(a)-X_G)>0, \quad \forall \alpha\in \Delta_B^Q. \end{equation}

If $\gamma\in Q(F)$ satisfies that $x^{-1}\gamma nx$ lies in the support of $f$ for some $n\in N_Q(\AAA)$ lying in a fixed fundamental domain of $N_Q(F)\backslash N_Q(\AAA)$, then $a^{-1}b^{-1}\gamma b a $ lies in a compact subset depending only in the support of $f$. Hence when $d(X)$ is large enough, we must have $\gamma\in P(F)$ (cf. \cite[3.6.6]{LabWal}). 
Note that by Corollary \ref{descent}, 
\[  \left((M_Q(F)\cap o)N_Q(F) \right)\cap P(F)=(M_P(F) \cap o)N_P(F).    \]

It remains to show that, if $X$ is deep enough in the positive chamber, then 
\[ \int_{N_P(F)\backslash N_P(\AAA)} \sum_{\gamma\in (M_P(F)\cap o)N_P(F) } f(x^{-1}\gamma nx) \d n=\int_{N_Q(F)\backslash N_Q(\AAA)}\sum_{\gamma\in (M_P(F)\cap o)N_P(F) }   f(x^{-1}\gamma nx)\d n. \]
This follows by a similar proof of \cite[I.2.8 Lemma]{MW} (there is a tiny omission in their proof as $V''\subseteq V$ can not be satisfied by only requiring $a^{\alpha}>c$). 
For reader's convenience, we complete the proof.

Let $N_P^Q=M_Q\cap N_P$. Then we have a decomposition $N_P(\AAA)=N_Q(\AAA) N_P^Q(\AAA)$.
Note that we can moreover modify $x$ from left by an element in $N_Q(\AAA)$, hence using the above decomposition of $x$, we can write $x=bmak$ with $b\in N_P^Q(\AAA)$ and $m\in M_P(\AAA)$ are in  fixed compact subsets, $a\in T(\AAA)$ and $k\in G(\ooo)$.
 
 We can also decompose the integral over $N_P(F)\backslash N_P(\AAA)$ into a double integral over \[N_P^Q(F)\backslash N_P^Q(\AAA)\times N_Q(F)\backslash N_Q(\AAA).\] For $n\in N_P(\AAA)$, let $n=n_Qn_P^Q$ be the associated decomposition. We have \[f(x^{-1}\gamma n x)=f((x^{-1}\gamma n_Q x ) (x^{-1} n_P^Q x) ).\] Since $f$ is smooth with compact support, it is invariant from right by a compact open subgroup $K$. Let $K'$ be a compact open subgroup such that $k^{-1}K'k\subseteq K$ for any $k\in G(\ooo)$.  Choose a compact open subset $V$ of $N_P^Q(\AAA)$, such that  
the projection $V\rightarrow N_P^Q(F)\backslash N_P^Q(\AAA)$ is bijective, then the integral over $N_P^Q(F)\backslash N_P^Q(\AAA)$ is the same thing as the integral over $V$. If we were in the local field case, then when $X$ is deep enough in the positive chamber, $a^{-1}m^{-1} b^{-1}Vbma$ would be contained in $K'$. In our global case, whenever $X$ is deep enough in the positive chamber, we conclude from the local case and weak approximation theorem that there is an $a_f\in T(F)$ such that, $a_f^{-1}a^{-1} m^{-1}b^{-1}Vbmaa_f$ is contained in $K'$. However as $x$ can be modified from left by an element in $P(F)$ without changing the statement, we can replace $a$ by $a_fa$. 
\end{proof}

Now we come back to the proof of the Theorem \ref{Main}. We only prove the group case since the Lie algebra case can be proved by similar arguments.

For any $\xi\in \ago_{B,\infty}$, let $X'\in \ago_{B}$ be a vector such that the pair $(\xi, X')$ is admissible and $d(X')$ is larger than the constant given in the lemma \ref{invariance}. 
Insert the identity (\ref{Lan}) into the expression of $k^{\xi, X}_{o}(x)$, after simplification, it equals the sum over all pairs of standard parabolic subgroups $P\subseteq Q$, followed by the sum over $\delta\in P(F) \backslash G(F)$ (where we have combined the sum over $Q(F)\backslash G(F)$ with the sum over $P(F)\backslash Q(F)$) of the following expression 
\begin{equation}
 (-1)^{\dim \ago_Q^G}\hat{\tau}_Q(H_B(\delta x)+[s_{\delta x}\xi]- X )  {\tau}_P^Q(H_B(\delta x) + [s_{\delta x}\xi] -X' ) F^{P, \xi, X'}(\delta x) k_{Q,o}(\delta x). \end{equation}

Note that Lemma \ref{invariance} says that the expression
\[ {\tau}_P^Q(H_B(\delta x) + [s_{\delta x}\xi] -X' ) F^{P, \xi, X'}(\delta x) k_{Q,o}(\delta x) \]
equals 
 \[ {\tau}_P^Q(H_B(\delta x) + [s_{\delta x}\xi] -X' ) F^{P, \xi, X'}(\delta x) k_{P,o}(\delta x).\]
Hence, we can arrange the order of the sum in $k_o^{\xi,X}$ over the set of pairs of standard parabolic subgroups $P\subseteq Q$. We deduce that $k_o^{\xi,X}$ equals the sum over standard parabolic subgroups $P$ followed by the sum over $\delta\in P(F)\backslash G(F)$ of the expression $F^{P, \xi, X'}(\delta x) k_{P,o}(\delta x)$ times 
\begin{equation}\label{oki}
\sum_{\{Q|P\subseteq Q\}}(-1)^{\dim \ago_Q^G}\hat{\tau}_Q(H_B(\delta x)+ [s_{\delta x}\xi]-X ){\tau}_P^Q(H_B(\delta x)+[s_{\delta x}\xi] -X' ).\end{equation}
We denote the sum (\ref{oki}) by $\Gamma_{P}(H_B({\delta x})+s_{\delta x}\xi-X' ,X-X')$, where \[\Gamma_{P}(H,X)=\sum_{\{Q|Q\supseteq P\}} (-1)^{\dim\ago_Q^G}\tau_P^Q(H)\htau_Q(H-X) \] is a function on $\ago_B\times \ago_B$. One can find more details about it in \cite[Section 1.8]{LabWal}. For us, we only need to know that 
\begin{enumerate}
\item 
For any $X\in \ago_B$, the projection of the support of the function $H\mapsto \Gamma_P(H,X)$ in $\ago_P^G$ is compact (\cite[1.8.3]{LabWal}).
\item
For any $H\in \ago_B$, $\Gamma_{P}(H, 0)=0$ if $P\neq G$ and  $\Gamma_{G}(H, 0)=1$ (\cite[1.8.1, 1.8.3]{LabWal}).
\end{enumerate}

In summary, for $X'$ deep enough, $k^{\xi, X}_o(x)$ equals 
\begin{equation}\label{anotherex}
\sum_{P\in \mathcal{P}(B)} \sum_{\delta\in P(F)\backslash G(F)} \Gamma_{P}(H_B({\delta x})+ [s_{\delta x}\xi]-X' ,X-X') F^{P, \xi, X'}(\delta x) k_{P,o}(\delta x). 
\end{equation}
We know that the function \[ x\mapsto \Gamma_{P}(H_B({ x})+[s_{ x}\xi]-X' ,X-X') F^{P,\xi, X'}(x) \] has compact support 
in $P(F)\backslash G(\AAA)/Z_G(\AAA)$ thanks to our Proposition \ref{FP=FM} and Proposition \ref{compact}. Therefore, we conclude that $k_o^{\xi,X}(x)$ always have compact support in $G(F)\backslash G(\AAA)/Z_G(\AAA)$. 
 
When $d(X)\gg 0$, one can take simply $X'=X$. Since $\Gamma_{P}(H, 0)=0$ for any $H$ if $P\neq G$, we conclude that   \[ k_o^{\xi,X}(x)= F^{G,\xi, X}(x)\sum_{a\in \Xi_G}\sum_{\gamma \in o}f(ax^{-1}\gamma x). \]

\subsection{Proof of the Theorem \ref{polynomial}}
We need a lemma first. Suppose $M$ is a standard Levi subgroup. Let's choose a lattice $\Xi_M$ in $Z_M(F)\backslash Z_M(\AAA)$ containing $\Xi_G$. 
The following lemma is a variant of \cite[4.5.5]{Chau}. 
\begin{lemm}[Chaudouard]\label{quasi-p} \sloppy
For any standard parabolic subgroup $P$ with Levi subgroup $M=M_P$, any $H_0 \in \ago_{P}$, the function $X\in \ago_{B, \mathbb{Q}}\mapsto \sum_{H\in H_P(\Xi_M)/H_P(\Xi_G)  } \Gamma_Q(H+H_0,X)$ is a quasi-polynomial. 
\end{lemm}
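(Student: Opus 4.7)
The plan is to reduce the claim to an Ehrhart-type statement about counting lattice points in a family of polyhedral regions whose defining inequalities depend affinely on $X$. First I would identify $H_P(\Xi_M)/H_P(\Xi_G)$ with a full-rank sublattice $L$ (of finite index in a natural lattice) of $\ago_P^G$. Recall from the proof of Theorem \ref{Main} that for each fixed $X$, the projection to $\ago_P^G$ of the support of $H\mapsto \Gamma_P(H+H_0, X)$ is compact, so the sum $\Phi(X):=\sum_{H\in L}\Gamma_P(H+H_0, X)$ is finite for each $X$ and the double interchange below is justified.

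Next I would insert the definition
\[
\Gamma_P(H+H_0, X) = \sum_{Q\supseteq P} (-1)^{\dim\ago_Q^G}\, \tau_P^Q(H+H_0)\,\htau_Q(H+H_0-X),
\]
and swap the finite sum over $H$ with the sum over $Q$. For each $Q$, the product $\tau_P^Q(H+H_0)\htau_Q(H+H_0-X)$ is the characteristic function of the region
\[
R_Q(X)=\{H\in \ago_P^G \mid \langle\alpha, H+H_0\rangle >0\ \forall\alpha\in\Delta_P^Q,\ \langle\varpi, H+H_0-X\rangle >0 \ \forall\varpi\in\hat{\Delta}_Q\}.
\]
The first family of inequalities is independent of $X$, while the second has right-hand sides that are linear in $X$. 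Hence $\#(L\cap R_Q(X))$ counts lattice points in a simplicial polytopal region whose apex moves affinely with $X$.

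The final step is to recognize this count as a quasi-polynomial in $X$. Writing $\ago_P^G$ in the two dual bases $\hat{\Delta}_P^G$ and $(\Delta_P^G)^\vee$, the characteristic function of the cone cut out by $\Delta_P^Q$ and $\hat{\Delta}_Q$ can be expanded so that the generating series $\sum_{H\in L} [H\in R_Q(X)]$ factors, chamber by chamber in $\ago_{B,\mathbb Q}$, as a finite product of univariate geometric series in the variables $q^{\langle\nu, X\rangle}$ for $\nu$ ranging in a finite subset of $\frac{2\pi i}{\log q}\ago_{B,\mathbb Q}^*\cup \ago_{B,\mathbb Q}^*$. This produces on each chamber a finite expression of the form $\sum_{\nu\in\mathfrak{f}}p_\nu(X)q^{\langle\nu, X\rangle}$, matching the definition of quasi-polynomial preceding Theorem \ref{polynomial}. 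The signed sum over $Q\supseteq P$, combined with the compact support property of $\Gamma_P$, ensures that the chamber-dependent tails cancel so that a single quasi-polynomial formula is valid on all of $\ago_{B,\mathbb Q}$.

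The main obstacle I expect is precisely this last cancellation. Piece by piece the lattice-point count is only a piecewise quasi-polynomial, with facets depending on which chamber of $\ago_{B,\mathbb Q}$ the vector $X$ lies in; what forces these pieces to assemble into one global quasi-polynomial is the inclusion-exclusion encoded in $\sum_Q (-1)^{\dim \ago_Q^G}$. This is the function-field analogue, with volumes replaced by lattice point counts, of Arthur's classical fact that $\int_{\ago_P^G}p(H)\Gamma_P(H,X)\,dH$ is polynomial in $X$; the combinatorial analysis ensuring the cancellation can be carried out as in \cite[4.5.5]{Chau}.
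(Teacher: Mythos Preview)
Your overall strategy matches the paper's: expand $\Gamma_P$ via its defining alternating sum over $Q\supseteq P$ and reduce to a geometric-series computation, following \cite[4.5.5]{Chau}. But there is a genuine gap in the execution. You write ``swap the finite sum over $H$ with the sum over $Q$'' and then speak of $\#(L\cap R_Q(X))$ as counting lattice points in a ``simplicial polytopal region''. For a fixed $Q$ with $P\subsetneq Q\subsetneq G$, the region $R_Q(X)$ is an \emph{unbounded} shifted cone, and $\#(L\cap R_Q(X))$ is infinite. The sum over $H$ is finite only \emph{after} the alternating sum over $Q$ is performed (this is exactly the compact-support property of $\Gamma_P$), so the swap you describe is not valid as stated, and no Ehrhart-type theorem for polytopes applies term by term.

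The paper (and Chaudouard) regularizes this by introducing an auxiliary complex variable $\lambda\in\ago_{P,\mathbb{C}}^{*}$: one studies $f_\lambda(X)=\sum_{H}\Gamma_P(H+H_0,X)\,q^{-\langle\lambda,H\rangle}$, which is an entire function of $\lambda$ because the sum is finite. After the swap, each $f_\lambda^Q(X)$ is an infinite series that converges only in a region of $\lambda$-space, where it is computed explicitly (after a finite Fourier transform passing from the lattice $H_P(\Xi_M)$ to a convenient product lattice $L_Q$) as a product of geometric series, hence a rational function of $\lambda$ with poles. The holomorphy of the total $f_\lambda(X)$ forces the poles to cancel in the alternating sum, and $f_0(X)$ is recovered as the sum of the constant terms of the Laurent expansions of the $f_\lambda^Q(X)$ at $\lambda=0$; inspection of the explicit product formula shows this constant term is a quasi-polynomial in $X$. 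Your final paragraph gestures at the needed cancellation but misidentifies its nature: it is not a matching of piecewise formulas across chambers of $X$, but a cancellation of \emph{poles in the auxiliary variable $\lambda$}, which is what makes the divergent individual counts meaningful.
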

\begin{proof}
The proof is essentially the same as that of \cite[4.5.5]{Chau}. 

Let $L_0\subseteq \ago_{B,\mathbb{Q}}$ be any lattice. 
We can fix an integer $N$ that is divisible enough, such that for any parabolic subgroup $P\subseteq Q$, we have
\[ H_P(\Xi_M)   \subseteq \frac{1}{N}  \mathbb{Z}(\hat{\Delta}_P^{Q, \vee}) \oplus \frac{1}{N}  \mathbb{Z}(\Delta_Q^{\vee})   \oplus \frac{1}{N}  \ago_{G, \mathbb{Z}},  \]
and 
\[ L_0  \subseteq \frac{1}{N}  \mathbb{Z}(\hat{\Delta}_P^{Q, \vee}) \oplus \frac{1}{N}  \mathbb{Z}(\Delta_Q^{\vee})   \oplus \frac{1}{N}  \ago_{G, \mathbb{Z}}. \]

For any $\lambda \in \ago_{P,\mathbb{C}}^{*}$, let \[ f_{\lambda}(X):=\sum_{H\in H_P(\Xi_M)/H_P(\Xi_G)  }  \Gamma_P(H+H_0,X)q^{-\langle\lambda, H\rangle}. \]
We need to prove that $f_{0}(X)$ is a quasi-polynomial in $X$. As the sum above is finite (the support of $\Gamma_P(\cdot, X)$ is compact in $\ago_P^G$), for any $X\in \ago_B$, $f_{\lambda}(X)$ is a holomorphic function in $\lambda$. While by definition of $\Gamma_P$, we have \[ f_\lambda(X)=\sum_{\{Q\mid P\subseteq Q\}} (-1)^{\dim \ago_Q^G} f_{\lambda}^{Q}(X), \]
where $f_\lambda^{Q}(X)$ is the series defined by
 \[ f_\lambda^{Q}(X)=\sum_{H\in H_P(\Xi_M)/H_P(\Xi_G)  }   \tau^{P}_{Q}(H+H_0)\htau_{P}(H+H_0-X)q^{-\langle\lambda, H\rangle}. \]

Let $L^{*}_Q\subseteq \ago_{P}^{*}$ be the lattice dual to $L_Q= \frac{1}{N}  \mathbb{Z}(\hat{\Delta}_P^{Q, \vee}) \oplus \frac{1}{N}  \mathbb{Z}(\Delta_Q^{\vee})   \oplus \frac{1}{N}  \ago_{G, \mathbb{Z}} $ and $H_P(\Xi_M)^{*} \subseteq \ago_{P}^{*}$ the lattice dual to $H_P(\Xi_M)$. Note that $L_Q\supseteq H_P(\Xi_M)$. Let $\mathfrak{f}$ be a system of representatives for the quotient $\frac{2\pi i}{\log q} H_P(\Xi_M)^{*}/ \frac{2\pi i}{\log q} L^{*}_Q $. Then using Fourier transform on the finite abelian group $L_Q/H_P(\Xi_M)$, we have
\[ f_\lambda^{Q}(X) = |\frac{ H_P(\Xi_M)^{*} }{L_Q^{*} }  |^{-1}\sum_{\nu\in \mathfrak{f}} \sum_{H\in L_Q/H_P(\Xi_G)  }   \tau^{Q}_{P}(H+H_0)\htau_{Q}(H+H_0-X)q^{-\langle\lambda+\nu, H\rangle}  . \]
After a direct calculation, for any $X\in L_0$, $f_\lambda^{Q}(X) $ is equal to $ |\frac{ H_P(\Xi_M)^{*} }{L^{*}_Q }  |^{-1} |\frac{\frac{1}{N}  \ago_{G,\mathbb{Z}}      }{H_P(\Xi_G) }  |$ times
\begin{multline*}      \sum_{\nu\in \mathfrak{f}}   \prod_{{\varpi}^{\vee}_{\alpha}      \in    \hat{\Delta}_P^{Q, \vee} }  
\frac{q^{ - \langle \lambda+\nu,    \varpi^{\vee}_{\alpha}  \rangle ( [-N\langle\alpha, H_0\rangle]  +1) /N }  }{1- q^{-\langle \lambda+\nu,    \varpi^{\vee}_{\alpha}      \rangle /N } }
  \prod_{\beta^{\vee}\in \Delta^{\vee}_Q }   \frac{  q^{ -  \langle \lambda+\nu,   \beta^{\vee}  \rangle ( N\langle\varpi_{\beta},   X \rangle+  [N\langle\varpi_{\beta}, -H_0   \rangle]  +1)/N  }             }{ 1- q^{-\langle \lambda+\nu,    \beta^{\vee}      \rangle/N }       } ,
  \end{multline*}
where we use $[x]$ to denote the largest integer smaller than or equal to $x$. 
  
As $f_{\lambda}(X)=\sum_{\{Q | P\subseteq Q  \}}(-1)^{\dim \ago_P^G} f_{\lambda}^{Q}(X)$ is holomorphic on $\ago_{P,\mathbb{C}}^{*}$, for $\lambda=0$, $f_0(X)$ equals the alternative sum of constant terms of Laurent expansions of $f_{\lambda}^{Q}(X)$ around $0$, which shows that the result holds. 
\end{proof}

Now we come back to the proof of the Theorem \ref{polynomial}.

Using the expression (\ref{anotherex}) of $k^{\xi, X}_{o}(x)$, the integral $J^{\xi,X}_o$ equals the sum over  standard parabolic subgroups $P\in \mathcal{P}(B) $ of 
\begin{equation}\label{xj}
\int_{P(F)\backslash G(\AAA)/\Xi_G} \Gamma_{P}(H_B({x})+ [s_{x}\xi ] -X', X-X')F^{P,\xi,X'}( x) k_{P,o}(x) \d x  .   \end{equation}

Let's prove that for each standard parabolic subgroup $P$, the expression (\ref{xj}) is a quasi-polynomial in $X$. Using Iwasawa decomposition, we can decompose the integral into a double integral:
\[ \int_{P(F)\backslash P(\AAA)/\Xi_G} \int_{G(\ooo)} \Gamma_{P}(H_B(p)+[ s_{k}\xi]-X', X-X')F^{P,\xi,X'}(pk) k_{P,o}(pk) \d k \d p. \]
By definition $ s_{k} $ depends on the coset in $G(\ooo)/\mathcal{I}_{\infty}\prod_{v\neq \infty}G(\ooo_v)$ represented by $k$, and by Proposition \ref{FP=FM}, the same is true for $F^{P,\xi,X'}(pk)$. Moreover, note that the function $p\mapsto k_{P,o}(pk)$ is invariant under multiplication by $Z_{M_P}(\AAA)$, in particular by $\Xi_M$. The same is clearly true for $p\mapsto F^{P,\xi,X'}(pk)$ too. 
Thus, the integral (\ref{xj}) equals the integral over ${u\in G(\ooo_\infty)/\mathcal{I}_\infty}$ followed by the integral over $p\in {P(F)\backslash P(\AAA)/\Xi_M } $ of the product of \[ F^{P,\xi,X'}(pu)    \int_{\mathcal{I}_{\infty}\prod_{v\neq \infty}G(\ooo_v) }  k_{P,o}(puk) \d k,  \] with 
\begin{equation} 
 \sum_{H \in H_P(\Xi_M)/H_P(\Xi_G)  }    \Gamma_{P}(H+ H_B(p)+[ s_{u}\xi]-X', X-X')    .   \end{equation}
By lemma \ref{quasi-p}, this last expression is a quasi-polynomial in $X$, hence $X\mapsto J^{\xi, X}$ is a quasi-polynomial.

\section{Another expression for $J^{\xi, X}_{o}$} \label{essunip}
Let $Q$ be a standard parabolic subgroup of $G$, set 
\begin{equation}
j_{Q,o}(x)=\sum_{a\in \Xi_{G}} \sum_{\gamma\in M_Q(F)\cap o} \sum_{n\in N_P(F)}f(ax^{-1}\gamma n x).  
\end{equation}
We can eaisly verify that $j_{Q,o}(x)$ is both $M_Q(F)$ and $N_Q(F)$ left invariant, hence $Q(F)$ left invariant. 
Therefore, for any $x\in G(\AAA)$, we can define a function $j^{\xi, X}_{o}(x)$ using the same expression (\ref{kQo}) as $k^{\xi, X}_{o}(x)$ (page \pageref{kQo}) but replacing $k_{Q,o}$ by $j_{Q,o}$. 

In the Lie algebra case, when $G=GL_n$ and $f$ is some special test function, 
Chaudouard has showed (cf. \cite[3.9-3.11]{Chau}) that the integral of $j^{\ggg\lll_n,0,0}$  (the sum of $j^{\ggg\lll_n,0,0}_o$ over all $o\in \mathcal{E}$) over $G(F)\backslash G(\AAA)/\Xi_G$ is closely related to the groupoid volume of semi-stable Higgs bundles. Moreover, he has showed in \cite[Corollaire 5.2.2]{Chau} that the integral  of $j^{\ggg\lll_n,0,0}$ coincides with $J^{\ggg\lll_n,0,0}$. Inspired by his results, we have the following proposition. 
\begin{prop}\label{redu}
Let $(\xi, X)\in \ago_{B,\infty}\times \ago_{B, \mathbb{Q}}$ be any pair of vectors.  
The function $j^{\xi, X}_{o}$ has compact support in $G(F)\backslash G(\AAA)/Z_G(\AAA)$. Moreover, we have 
\begin{equation} \label{vari}J^{\xi,X}_{o}(f) = \int_{G(F)\backslash G(\AAA)/\Xi_G}  j^{\xi, X}_{o}(x)  \d x.   \end{equation}
\end{prop}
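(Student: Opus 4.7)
The strategy is to imitate the proofs of Theorem~\ref{Main} and Theorem~\ref{polynomial}, replacing $k_{Q,o}$ by $j_{Q,o}$ throughout. The crucial first step is the $j$-analog of Lemma~\ref{invariance}: for $f \in \mathcal{C}_c^\infty(G(\AAA))$ there exists a constant $c = c(f, \xi)$ such that, for $d(X) > c$, any pair $P \subseteq Q$ of standard parabolic subgroups, and any $x \in G(\AAA)$ with $F^{P, \xi, X}(x)\,\tau_P^Q(H_B(x) + [s_x \xi] - X) = 1$, one has $j_{P, o}(x) = j_{Q, o}(x)$. This is in fact \emph{easier} than the original: by Corollary~\ref{descent}, $j_{Q, o}(x) = \sum_a \sum_{\eta \in o \cap Q(F)} f(ax^{-1} \eta x)$, and the same Iwasawa-plus-Siegel-set argument used in the proof of Lemma~\ref{invariance} forces any contributing $\eta$ to lie in $P(F)$, hence (again by Corollary~\ref{descent}) in $o \cap P(F)$. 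The defining sums of $j_{P, o}(x)$ and $j_{Q, o}(x)$ then coincide term by term, with no measure-swap between $N_Q(\AAA)$ and $N_P(\AAA)$ needed.

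Inserting identity~\eqref{Lan} into the definition of $j^{\xi, X}_o$ and simplifying exactly as in the proof of Theorem~\ref{Main} yields, for any admissible $X'$ with $d(X') > c$, the analog of \eqref{anotherex}:
\[
j^{\xi, X}_o(x) = \sum_{P \in \mathcal{P}(B)}\, \sum_{\delta \in P(F) \backslash G(F)} \Gamma_P\!\bigl(H_B(\delta x) + [s_{\delta x} \xi] - X',\, X - X'\bigr)\, F^{P, \xi, X'}(\delta x)\, j_{P, o}(\delta x).
\]
The factor $\Gamma_P(\cdot,\, X - X')\cdot F^{P, \xi, X'}$ has compact support in $P(F) \backslash G(\AAA)/Z_G(\AAA)$ by Propositions~\ref{FP=FM} and~\ref{compact}, and $j_{P, o}$ is bounded on compact subsets of $G(F) \backslash G(\AAA)/Z_G(\AAA)$ (since by reduction theory only finitely many $\eta$ contribute for $x$ in a compact set). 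Hence $j^{\xi, X}_o$ has compact support in $G(F) \backslash G(\AAA)/Z_G(\AAA)$, which is the first assertion. When $d(X) \gg 0$ we may take $X' = X$; since $\Gamma_P(H, 0) = 0$ for $P \neq G$ and $j_{G, o} = k_{G, o}$ (as $N_G$ is trivial), only the $P = G$ term survives, yielding $j^{\xi, X}_o(x) = F^{G, \xi, X}(x) k_{G, o}(x) = k^{\xi, X}_o(x)$, so $\int j^{\xi, X}_o(x)\,dx = J^{\xi, X}_o(f)$ whenever $d(X)$ is sufficiently large.

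To extend this identity to all $X \in \ago_{B, \mathbb{Q}}$, I would prove that $X \mapsto \int_{G(F) \backslash G(\AAA)/\Xi_G} j^{\xi, X}_o(x)\,dx$ is a quasi-polynomial on $\ago_{B, \mathbb{Q}}$; since two quasi-polynomials agreeing on the cone $\{d(X) > c_0\}$ coincide globally, this then finishes the proof. \emph{This is the main obstacle.} The proof of Theorem~\ref{polynomial} invoked the $Z_{M_P}(\AAA)$-invariance of $p \mapsto k_{P, o}(pk)$---a free consequence of the $N_P(\AAA)$-integration in the definition of $k_{P, o}$---in order to switch the $p$-integration from modulo $\Xi_G$ to modulo $\Xi_M$ and then apply Lemma~\ref{quasi-p}. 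Since $j_{P, o}$ is built from a sum over $N_P(F)$, this invariance fails, and the adaptation requires analyzing the combined lattice sum $\sum_{z \in \Xi_M/\Xi_G} \Gamma_P\!\bigl(H_P(z) + H_B(p) + [s_k \xi] - X',\, X - X'\bigr)\,j_{P, o}(pzk)$: exploiting that $X$ enters only through $\Gamma_P$ and that the weights $j_{P, o}(pzk)$ are uniformly bounded on the compact region cut out by $F^{P, \xi, X'}$, a weighted variant of Lemma~\ref{quasi-p} should still yield a quasi-polynomial in $X$.
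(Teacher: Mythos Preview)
Your strategy is exactly the paper's: it too asserts that the proofs of Theorem~\ref{Main} and Theorem~\ref{polynomial} go through ``word by word'' with $j_{Q,o}$ in place of $k_{Q,o}$, obtains $j_o^{\xi,X}=k_o^{\xi,X}$ for $d(X)\gg 0$, and then matches two quasi-polynomials agreeing on a cone. You have correctly located the soft spot: the proof of Theorem~\ref{polynomial} uses that $p\mapsto k_{P,o}(pk)$ is $Z_{M_P}(\AAA)$-invariant to pull the $\Xi_M/\Xi_G$-sum outside, and this fails for $j_{P,o}$ since conjugation by an adelic central element of $M_P$ does not preserve $N_P(F)$. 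The paper does not address this beyond the blanket ``word by word'' claim, so you have in fact reproduced the paper's argument \emph{and} diagnosed a point it leaves unjustified.

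The paper does add one sentence you should be aware of: it suggests, as an alternative, checking that the constant term of $j_{Q,o}$ along $N_Q$ equals $k_{Q,o}$; granted this, integrating the $j$-analog of \eqref{anotherex} over $P(F)\backslash G(\AAA)/\Xi_G$ and performing the $N_P(F)\backslash N_P(\AAA)$-integration first would hand you $J^{\xi,X}_o(f)$ directly for every $X$, with no separate quasi-polynomial argument. Be cautious, though: that constant-term identity is not literally true as stated (for $G=SL_2$, $Q=B$, $o$ the unipotent class, $j_{B,o}$ is $N_B(\AAA)$-invariant and hence its own constant term, which is not $k_{B,o}$). So neither the paper's ``word by word'' nor its alternative is airtight; your proposed weighted variant of Lemma~\ref{quasi-p} is a reasonable direction, but it needs a real argument about the growth of $z\mapsto j_{P,o}(z\,\cdot)$ on $\Xi_M/\Xi_G$ rather than an appeal to boundedness on a compact set.
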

\begin{proof}
As showed by the proof of lemma \ref{invariance}, if $X'$ is deep enough (depending on the support of $f$) in the positive chamber, then for any pair of standard parabolic subgroups $P\subseteq Q$ and $x\in G(\AAA)$, 
\[  {\tau}_P^Q(H_B(x) + [s_{ x}\xi ]-X' ) F^{P, \xi, X'}( x) j_{Q,o}( x) \]
equals 
 \[ {\tau}_P^Q(H_B( x) + [s_{ x}\xi] -X' ) F^{P, \xi, X'}( x) j_{P,o}( x).\]
Therefore, the proof of the Theorem \ref{Main}  can be applied word by word here after replacing $k_{Q,o}(x)$ by $j_{Q,o}(x)$, which shows that $j^{\xi, X}_{o}$ has compact support in $G(F)\backslash G(\AAA)/Z_G(\AAA)$ and when $X$ is deep enough in the positive chamber 
\[ j_o^{\xi,X}(x)= F^{G,\xi, X}(x)\sum_{a\in \Xi_G}\sum_{\gamma \in o}f(ax^{-1}\gamma x), \quad \forall x\in G(\AAA). \]
Hence $j_o^{\xi,X}(x)$ coincides with $k_{o}^{\xi, X}(x)$ if $X$ is deep enough. 
Moreover, the proof of Theorem \ref{polynomial} can also be applied word by word after replacing $k_{Q,o}(x)$ by $j_{Q,o}(x)$, which shows that $X\in \ago_{B, \mathbb{Q}}\mapsto \int_{G(F)\backslash G(\AAA)/\Xi_G}  j^{\xi, X}_{o}(x)  \d x$ is a quasi-polynomial. It coincides with $J^{\xi,X}_{o}(f)$ when $X$ is deep enough in the positive chamber, hence coincides with $J^{\xi,X}_{o}(f)$ for every $X\in \ago_{B,\mathbb{Q}}$. 

Another way to prove the last statement of the theorem is by checking that the constant term of $j_{Q,o}$ along $Q$ is $k_{Q,o}$,  which we  leave to the reader. 
\end{proof}

\section{Some applications: non-existence or existence and uniqueness of some cuspidal automorphic representations} \label{appl}

In this section, we use the coarse expansion of the trace formula in this article to prove some results suggested to me by M. Harris.  

\subsection{A non-existence theorem}\label{10.1}
Now let $F=\mathbb{F}_q(t)$ be the function field of the projective line $\mathbb{P}^1=\mathbb{P}^{1}_{\mathbb{F}_q}$. 
Let $G$ be a split reductive group defined over $\mathbb{F}_q$. Let $\lambda, \mu \in \mathbb{P}^1(\mathbb{F}_q)$ be two distinct $\mathbb{F}_q$-rational points, identified as two places of $F$. 
Let $T_\lambda$ and $T_\mu$ be two maximal torus of $G$ defined over $\mathbb{F}_q$. Suppose that $\theta_\lambda$ and $\theta_\mu$ are characters in general position of $T_\lambda(\mathbb{F}_q)$ and $T_\mu(\mathbb{F}_q)$ respectively. We obtain by Deligne-Lusztig induction (fixing any isomorphism between $\mathbb{C}$ and $\overline{\mathbb{Q}}_\ell$) two irreducible representations 
\[\rho_\lambda=\epsilon_{T_\lambda}\epsilon_G R_{T_\lambda}^{G}(\theta_\lambda) \text{ and }  \rho_\mu=\epsilon_{T_\mu}\epsilon_G R_{T_\mu}^{G}(\theta_\mu)\] of $G(\mathbb{F}_q)$, where $\epsilon_{H}=(-1)^{rk_{\mathbb{F}_q}H}$ is the sign according to the parity of the split rank of $H$ (the rank of a maximal split sub-torus). 
By inflation, we obtain irreducible representations of $G(\mathcal{O}_\lambda)$ and $G(\mathcal{O}_\mu)$ respectively, still denoted by $\rho_\lambda$ and $\rho_\mu$.

In \cite{Yu}, we have defined the notion of cuspidal filter. Let $\rho$ be the representation of $G(\mathcal{O})$ as the tensor product of $\rho_\lambda$, $\rho_\mu$ and the trivial representation of $G(\mathcal{O}_v)$ for $v$ different from $\lambda, \mu$. Then $\rho$ is a cuspidal filter if for any $L^2$-automorphic representation $\pi$ of $G(\AAA)$, the $\rho$-isotypic part $\pi_\rho$ is non zero implies that $\pi$ is cuspidal.

We have given in \cite[Proposition 5.1.2]{Yu} a criterion for $\rho$ to be a cuspidal filter. 
If one of $T_\lambda$, $T_\mu$ is maximal anisotropic then $\rho$ is automatically a cuspidal filter. 
Suppose that $T_\lambda=T$ is split. Let $M_\mu$ be the minimal Levi subgroup containing $T_\mu$ defined over $\mathbb{F}_q$. Without loss of generality, we suppose that $M_\mu$ contains $T$. Then $\rho$ is a cuspidal filter if \begin{equation}\label{11} \theta_\lambda^w \theta_\mu |_{Z_M(\mathbb{F}_q)} \neq 1,  \end{equation}
for any Weyl element $w\in W$ and for any maximal semi-standard Levi subgroup $M$ of $G$ that contains $M_\mu$.

\begin{example}
For $SL_2$, we choose $T$ to be the group of diagonal matrix with determinant $1$, so that  $\mathbb{F}_q^{\times}\cong T(\mathbb{F}_q)$ via the map $x\mapsto (x, x^{-1})$.  Let $T_\lambda=T_\mu=T$,   $\theta_\mu$ and $\theta_\lambda$ be two characters of $\mathbb{F}_q^\times$, then $\rho$ is a cuspidal filter if \[ \theta_{\mu}\neq \theta_\lambda^{\pm 1}. \]
\end{example}

We prove the following theorem using the coarse geometric expansion developed in this article by taking $\xi$ to be in general position and using some results in \cite{Yu}. 
\begin{theorem}\label{nonexistence}
Suppose that $G\neq T$, $T_\lambda=T$ is split and $\rho$ is a cuspidal filter. 

There is no cuspidal automorphic representation $\pi=\otimes'\pi_v$ of $G(\AAA)$ such that $\pi_\lambda$ contains $(G(\mathcal{O}_\lambda), \rho_\lambda)$,  $\pi_\mu$ contains $(G(\mathcal{O}_\mu), \rho_\mu)$ and all other local components are unramified. 
\end{theorem}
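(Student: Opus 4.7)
The plan is to apply the trace formula
\[
J^{\xi}(f) \;=\; \text{(spectral side)} \;=\; \text{(geometric side)}
\]
to a carefully chosen test function $f$ and derive a contradiction from the assumed existence of such a $\pi$. I take $f = \prod_v f_v$ with $f_v = \mathbf{1}_{G(\mathcal{O}_v)}$ for $v \notin \{\lambda, \mu\}$, and at the two ramified places the normalized central idempotents
\[
f_\lambda \;=\; \frac{\dim \rho_\lambda}{|G(\mathbb{F}_q)|}\, \overline{\chi_{\rho_\lambda}}\cdot \mathbf{1}_{G(\mathcal{O}_\lambda)}, \qquad f_\mu \;=\; \frac{\dim \rho_\mu}{|G(\mathbb{F}_q)|}\, \overline{\chi_{\rho_\mu}}\cdot \mathbf{1}_{G(\mathcal{O}_\mu)},
\]
pulled back via reduction modulo the uniformizers, so that $\mathrm{tr}(\pi'(f)) = \dim((\pi')^{\rho})$ for every admissible representation $\pi'$ of $G(\AAA)$. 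By hypothesis \eqref{11} together with \cite[Proposition~5.1.2]{Yu}, $\rho$ is a cuspidal filter, hence the spectral side of $J^\xi(f)$ reduces to the cuspidal contribution $\sum_{\pi'\,\text{cusp}} m(\pi')\dim((\pi')^\rho)$, which is strictly positive whenever a $\pi$ as in the theorem exists.

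On the geometric side I apply Corollary \ref{5.3} with $\xi$ in general position, writing $J^{\xi}(f) = \sum_{o \in \mathcal{E}} J^{\xi}_o(f)$, and I plan to show each $J^{\xi}_o(f) = 0$. Using the simpler expression $j^\xi_o$ of Proposition \ref{redu}, each $J^\xi_o(f)$ unfolds into weighted orbital integrals whose local factors at $\lambda$ and $\mu$ pair indicator functions of integral elements against $\chi_{\rho_\lambda}$ and $\chi_{\rho_\mu}$. For a class $o$ whose semisimple part $\gamma_s$ is regular, Deligne--Lusztig orthogonality forces the local factor at $\lambda$ (resp.\ $\mu$) to vanish unless the reduction of $\gamma_s$ modulo $\lambda$ (resp.\ $\mu$) is $G(\overline{\mathbb{F}_q})$-conjugate to an element of $T_\lambda(\mathbb{F}_q)$ (resp.\ $T_\mu(\mathbb{F}_q)$); $\mathbb{F}_q$-rationality on $\mathbb{P}^1$ then confines $\gamma_s$, up to $G(F)$-conjugacy, to a specific Levi containing $T$ and a conjugate of $T_\mu$, and the hypothesis \eqref{11} applied to a maximal proper semistandard Levi $M$ containing $M_\mu$ produces the cancellation via the non-triviality of $\theta_\lambda^w \theta_\mu$ on $Z_M(\mathbb{F}_q)$. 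For non-regular classes, a descent to the centralizer $G_{\gamma_s}$ coupled with Theorem \ref{TFL} (the Lie algebra Fourier inversion) reduces the contribution to a nilpotent integral on a smaller reductive group to which the same genericity argument applies.

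The main obstacle is the geometric-side vanishing for non-regular and mixed classes, including the possibly non-empty class of elements lacking a Jordan--Chevalley decomposition. The descent and the Lie algebra Fourier inversion critically rely on both the Lie algebra trace formula of Section \ref{FTLie} and the choice of $\xi$ in general position: the latter removes boundary lattice points from the weight factors produced by Behrend's complementary polyhedra (Theorem \ref{complementary}), preventing a degenerate residual term that would otherwise spoil the cancellation. Assembling all these vanishings gives $J^\xi(f) = 0$, which contradicts the strict positivity of the spectral side and so rules out the existence of such a $\pi$.
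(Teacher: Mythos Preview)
Your overall architecture matches the paper's: choose a test function so that the spectral side of $J^{\xi}$ counts cuspidal $\pi$ with the prescribed local types, and then force the geometric side to vanish. But the geometric vanishing mechanism you sketch is not the one that actually works, and two of your proposed steps have real gaps.

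First, the paper does \emph{not} use the projector onto $\rho_\lambda$ at the level of $G(\mathcal{O}_\lambda)$. Since $T_\lambda=T$ is split, one instead takes the Iwahori $\mathcal{I}_\lambda$ and inflates $\theta_\lambda$ to a character of $\mathcal{I}_\lambda$; the test function $e_\rho$ lives on $G(\mathcal{O}^\lambda)\mathcal{I}_\lambda$. This Iwahori structure is essential: the vanishing theorem \cite[Theorem~3.2.5]{Yu} that kills all non-elliptic contributions $J^{G,\xi}_o$ requires precisely this shape of test function together with $\xi$ in general position. With your $f_\lambda$ supported on all of $G(\mathcal{O}_\lambda)$, you have no access to that result, and your proposed alternative (``Deligne--Lusztig orthogonality confines $\gamma_s$ to a Levi, then \eqref{11} gives cancellation'') is not a valid substitute: condition \eqref{11} is used in the paper only to guarantee that $\rho$ is a cuspidal filter on the spectral side, never to produce geometric cancellation.

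Second, even after reducing to elliptic $\sigma\in T(\mathbb{F}_q)$, your descent sketch misses the actual mechanism. The paper descends via \cite[Theorem~3.2.5]{Yu} to $J^{G_\sigma^0,\xi}_{unip}$, then passes to the Lie algebra through Springer's hypothesis with a function $\varphi$ whose component $\varphi_\mu$ is the Fourier transform of the characteristic function of a \emph{regular} orbit. The Lie algebra trace formula (Theorem~\ref{TFL}) converts $J^{\ggg_\sigma,\xi}(\varphi)$ into $J^{\ggg_\sigma,\xi}(\hat\varphi)$, and now $\hat\varphi_\mu$ is supported on regular semisimple elements. But the same vanishing theorem forces only central classes of $\ggg_\sigma$ to contribute, and a central element is never regular unless $G_\sigma^0$ is a torus; since $\sigma$ lies in the split torus $T$ and $G\neq T$, this cannot happen. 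That dichotomy---support on regulars versus contribution only from centrals---is the heart of the argument, and it is absent from your proposal.
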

\begin{proof}
Let $\mathcal{I}_\lambda$ be the standard Iwahori subgroup of $G(F_\lambda)$ and $\mathcal{O}^{\lambda}=\prod_{v\neq \lambda}\mathcal{O}_v$. We have a canonical morphism $\mathcal{I}_\lambda\rightarrow B(\mathbb{F}_q)$. By inflation, the character $\theta_\lambda$ of $T(\mathbb{F}_q)$ defines a character of $\mathcal{I}_{\lambda}$. 
Let $\rho$ be the representation of $G(\mathcal{O}^{\lambda})\mathcal{I}_\lambda$ defined as the tensor product of $\theta_\lambda$ of $\mathcal{I}_\lambda$, $\rho_\mu$ of $G(\mathcal{O}_\mu)$ and the trivial representation of $G(\mathcal{O}_v)$ for places $v$ outside $S=\{\lambda,\mu\}$.
Let \[e_\rho=\begin{cases} \frac{1}{vol(\mathcal{I}_\lambda)} \mathrm{Tr}(x^{-1}|\rho) , \quad x\in G(\mathcal{O}^{\lambda})\mathcal{I}_\lambda; \\
0, \quad x\notin G(\mathcal{O}^{\lambda})\mathcal{I}_\lambda.
\end{cases}
\]
After \cite[Proposition 5.2.6]{Yu}, we have \[J^{G,\xi}(e_\rho)=\sum_{\pi}m_\pi\dim \mathrm{Hom}_{G(\mathcal{O})}(\rho, \pi), \]
where the sum is taken over the set of isomorphism classes of cuspidal automorphic representations of $G(\AAA)$ and $m_\pi$ is the multiplicity of $\pi$ in the cuspidal automorphic spectrum, and we choose $\lambda$ as the point $\infty$, $\xi$ is chosen to be in general position in the sense that  the projection of $\xi$ to $\ago_P$ does not belong to $X_{*}(P)+\ago_G$ for any semi-standard parabolic subgroup $P\subsetneq G$.

Using the coarse geometric expansion established earlier (Corollary \ref{5.3}), we have 
\[J^{G,\xi}(e_\rho)=\sum_{o\in \mathcal{E}}  J^{G,\xi}_{o}(e_\rho).  \]
We showed in \cite[Theorem 3.2.5]{Yu} that \( J^{G,\xi}_{o}(e_\rho)=0, \) 
if $o$ is not represented by an elliptic element $\sigma\in T(\mathbb{F}_q)$, i.e. $\sigma\in T(\mathbb{F}_q)$ and satisfies $Z_{G_\sigma^0}^0=Z_G^0$. Note that the theorem \textit{loc. cit.} is stated for split semi-simple reductive group but its proof works without semi-simplicity. 
For the situation we are discussing, we want to show further that \[ J^{G,\xi}_{[\sigma]}(e_\rho)=0, \] 
even for an elliptic element $\sigma\in T(\mathbb{F}_q)$, then this completes the proof.

Using \cite[Theorem 3.2.5]{Yu} again, it suffices to prove that 
\(  J^{G^0_\sigma,w\xi}_{[\sigma]}(e^{w^{-1}}_\rho) =0, \)
for any $w\in W^{(G_\sigma^0, T)}\backslash W$ which sends positive roots of $(G_\sigma^0, T)$ to positive roots. 
Note that for such a $w$, the group $w\mathcal{I}_\lambda w^{-1}\cap G_{\sigma}^0(F_\lambda)$ is the standard Iwahori subgroup $\mathcal{I}_{\lambda, \sigma}$ of $G_{\sigma}^0(F_\lambda)$ and $w\xi$ is also in general position. Without loss of generality,  let's prove that \[  J^{G^0_\sigma, \xi}_{[\sigma]}(e_\rho) =0. \]

Let $f=\otimes f_v\in \mathcal{C}_c^{\infty}(G_\sigma^0(\AAA))$ be a function defined as the tensor product of $f_\lambda=\mathbbm{1}_{\mathcal{I}_\lambda}$, \[f_\mu= \begin{cases}R_{T_\mu'}^{G_{\sigma}^0}(1)(x), \quad x\in G(\mathcal{O}_\mu); \\
0, \quad x\notin G(\mathcal{O}_{\mu}).
\end{cases}
 \] where $T_\mu'$ is any maximal torus of $G_\sigma^0$ defined over $\mathbb{F}_q$, and $\mathbbm{1}_{G_\sigma^0(\mathcal{O}_v)}$ for places $v$ outside $\{\lambda, \mu\}$. After the character formula of Deligne-Lusztig (\cite[Theorem 4.2]{DL}) which applies for every semi-simple element $\sigma\in G(\mathbb{F}_q)$ and every unipotent element $u\in G_\sigma^0(\mathbb{F}_q)$:
\[  R_{T_\mu}^{G}(\theta_\mu)(\sigma u) =  \frac{1}{|G_\sigma^{0}(\mathbb{F}_q)|}   \sum_{\{     \gamma \in G(\mathbb{F}_q)\mid \sigma\in \gamma T_\mu  \gamma^{-1}    \}}      \theta_\mu(\gamma^{-1}\sigma\gamma)  R_{\gamma T_\mu\gamma^{-1}}^{G_{\sigma}^0}(1)(u), \]
it suffices to prove that \[J^{G_\sigma^0,\xi}_{unip}(f)=0,\] where $J^{G_\sigma^0,\xi}_{unip}=J^{G_\sigma^0,\xi}_{[1]}$. For this, we're going to pass to Lie algebra. Let $\ggg_\sigma$ be the Lie algebra of $G_\sigma^0$ and $\mathfrak{I}_{\sigma,\lambda}\subseteq\ggg_\sigma(\mathcal{O}_\lambda)$ be the standard Iwahori Lie sub-algebra.

Let $\varphi=\otimes \varphi_v\in \mathcal{C}_c^{\infty}(\ggg_\sigma(\AAA))$, where for $v$ outside $\{\lambda,\mu\}$, 
\[ \varphi_v=\mathbbm{1}_{\ggg_\sigma(\ooo_v)}; \]
for $v=\lambda$, 
\[ \varphi_{\lambda} =  \mathbbm{1}_{ \mathfrak{I}_{\sigma,\lambda}}, \] and for $v=\mu$,  \begin{equation*}
\varphi_{\mu}= q^{\frac{1}{2}l^2+\frac{1}{2}l-1 } \hat{\mathbbm{1}_{\Omega_{- t_\mu}  }},  
\end{equation*}
where $t_\mu$ is a semi-simple regular element in $\ttt'_\mu(\mathbb{F}_q)$, and $\hat{\mathbbm{1}_{\Omega_{- t_\mu}  }}$ is the Fourier transform of the characteristic function of the set $\Omega_{- t_\mu}$ consisting of elements $x\in \ggg_\sigma(\mathcal{O}_\mu)$ whose reduction mod-$\wp_\mu$ lies in the $G(\mathbb{F}_q)$-conjugacy class of $-t_\mu$. The support of $\varphi$ is contained in $\ggg_\sigma(\mathcal{O}^{\lambda})\mathfrak{I}_\lambda$. Therefore we have
\[ J^{\ggg_\sigma, \xi}(\varphi)= |\mathfrak{z}_{\ggg_\sigma}(\mathbb{F}_q)| J^{\ggg_\sigma, \xi}_{nilp}(\varphi). \]
Springer's hypothesis proved by Kazhdan (\cite[Theorem A.1]{KV}) tells us that \[  \varphi_\mu(u) = f_\mu(l(u)) , \] 
for any unipotent element $u\in G_\sigma^0(\mathcal{O}_\mu)$ and any Springer's isomorphism (which exists since we're in very good characteristic) $l: \mathcal{U}_{G_\sigma^0}\xrightarrow{\sim} \mathcal{N}_{\ggg_\sigma}$ defined over $\mathbb{F}_q$.

Let $\langle \cdot, \cdot \rangle$ be a $G$-invariant bilinear form on $\ggg$ defined over $\mathbb{F}_q$. Thus, it defines by taking respectively $\AAA$-points, and $F_v$-points (for every place $v$), a bilinear form on $\ggg(\AAA)$ and $\ggg(F_v)$ respectively. 
For $\mathbb{P}^1=\mathbb{P}^{1}_{\mathbb{F}_q}$, the canonical line bundle is isomorphic to $\mathcal{O}_{\mathbb{P}^{1}}(-2)$ and $K_{\mathbb{P}^{1}}=-\lambda-\mu$ is a canonical divisor. Let $\wp_v$ be the maximal ideal of $\mathcal{O}_v$, let $\wp^{-K_{\mathbb{P}^{1}}}=\wp_\lambda \wp_\mu \prod_{v\neq \lambda , \mu} \mathcal{O}_v$. 
We have 
\[ \mathbb{A}/(F+  \wp^{-K_{\mathbb{P}^{1}}  } )\cong H^{1}(\mathbb{P}^1, \mathcal{O}_{\mathbb{P}^{1}}(-2))\cong H^{0}(\mathbb{P}^1,\mathcal{O}_{\mathbb{P}^1})^{*}\cong\mathbb{F}_{q},\] by Serre duality. 
Fix a non-trivial additive character $\psi$ of $\mathbb{F}_q$. 
Via above isomorphisms, $\psi$ can be viewed as a character of $\AAA/F$. We use this $\psi$ in the definition of Fourier transform. 
By the trace formula of Lie algebra established earlier (Theorem \ref{TFL}), we have 
\[J^{\ggg_\sigma, \xi}(\varphi) = q^{(1-g)\dim \ggg_\sigma} J^{\ggg_\sigma, \xi}(\hat{\varphi}).  \]
The key point is that the Fourier transform $\hat{\varphi}$ has support in $\ggg_\sigma(\mathcal{O}^{\lambda})\mathfrak{I}_\lambda $ as well (\cite[5.3.2]{Yu}). Therefore, by our vanishing theorem \cite[Theorem 3.2.5]{Yu} again
\[ J^{\ggg_\sigma,\xi}(\hat{\varphi})=\sum_{z\in \zzz_{\ggg_\sigma}(\mathbb{F}_q)} J_{[z]}^{\ggg_\sigma,\xi}(\hat{\varphi}). \]
It's clear that for any $z\in \zzz_{\ggg_\sigma}(\mathbb{F}_q)$, we have
 \[ J_{[z]}^{\ggg_\sigma,\xi}(\hat{\varphi})=0 . \]
In fact, the function $\hat{\varphi}_\mu$ is supported in the set of regular semi-simple elements. Therefore \( J_{o}^{\ggg_\sigma,\xi}(\hat{\varphi})=0 ,\) if $o$ is not represented by a regular semi-simple element in $\ggg_\sigma(F)$. However $z$ is never regular if $G_\sigma^0$ is not a torus. Since $\sigma$ is an elliptic element contained in $T(\mathbb{F}_q)$ and the torus $T$ is split, $G_\sigma^0$ is not a torus if $G$ is not a torus.  
\end{proof}

\subsection{$SL_l$ with $l$ being a prime} 
As in Section \ref{10.1}, we still suppose that $F=\mathbb{F}_q(t)$ and $\lambda, \mu$ are two distinct places of $F$ of degree $1$.

Let $l$ be a prime number. 
Let $G=SL_l$. Recall that $p$ is a very good prime for $SL_l$ if $p\neq l$.  Let $T_\lambda=T_\mu$ be an  anisotropic maximal torus defined over $\mathbb{F}_q$. Recall that $\rho_\lambda$ is cuspidal if and only if $T_\lambda$ is anisotropic (since $\theta_\lambda$ is in general position).  
\begin{theorem}\label{102}
 If $\rho_\lambda$ is  contragredient to $\rho_\mu$, then there is exactly one cuspidal automorphic representation $\pi=\otimes'\pi_v$ of $G(\AAA)$ (with cuspidal multiplicity $1$) such that $\pi_\lambda$ contains $(G(\mathcal{O}_\lambda), \rho_\lambda)$,  $\pi_\mu$ contains $(G(\mathcal{O}_\mu), \rho_\mu)$ and all other local components are unramified. 
If $\rho_\lambda$ is not contragredient to $\rho_\mu$, then there is no such cuspidal automorphic representation. 
\end{theorem}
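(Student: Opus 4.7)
The proof will follow the template established in the proof of Theorem \ref{nonexistence}. Define $e_\rho \in \mathcal{C}_c^\infty(G(\AAA))$ to be the product of the normalized matrix coefficients of $\rho_\lambda$ on $G(\mathcal{O}_\lambda)$, of $\rho_\mu$ on $G(\mathcal{O}_\mu)$, and the characteristic functions of $G(\mathcal{O}_v)$ at the remaining places. Since both $T_\lambda$ and $T_\mu$ are maximal anisotropic, $\rho$ is automatically a cuspidal filter, so \cite[Proposition 5.2.6]{Yu} gives the spectral identity
\[
 J^{G,\xi}(e_\rho) = \sum_\pi m_\pi \dim \mathrm{Hom}_{G(\mathcal{O})}(\rho, \pi),
\]
with $\pi$ running over cuspidal automorphic representations with the prescribed local components and $\xi$ chosen in general position. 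The theorem is then equivalent to showing that $J^{G,\xi}(e_\rho) = 1$ when $\rho_\lambda$ is contragredient to $\rho_\mu$, and $J^{G,\xi}(e_\rho) = 0$ otherwise.

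Apply the coarse geometric expansion (Corollary \ref{5.3}) to write $J^{G,\xi}(e_\rho) = \sum_{o \in \mathcal{E}} J^{G,\xi}_o(e_\rho)$. Cuspidality of $\rho_\lambda$ and $\rho_\mu$ forces the local orbital integral of $e_{\rho_\lambda}$ (resp.\ $e_{\rho_\mu}$) to vanish on classes whose representatives are not elliptic at $\lambda$ (resp.\ $\mu$), so only classes $o$ that are elliptic at both places survive. For $G = SL_l$ with $l$ prime, the only proper Levi subgroups have the form $S(GL_{d_1} \times \cdots \times GL_{d_k})$ with $k \geq 2$, and such a Levi is anisotropic modulo center at a place only when all $d_i = 1$; consequently the semisimple elements of $G(F)$ that are elliptic at both $\lambda$ and $\mu$ are either central (lying in $\mu_l(\mathbb{F}_q)$) or regular, with centralizer an anisotropic maximal torus of $G$ over $F$. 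Moreover, the hypothesis $p \neq l$ ensures we are in very good characteristic, hence every element admits a Jordan–Chevalley decomposition.

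For the central contributions $o = [z]$ with $z \in \mu_l(\mathbb{F}_q)$, the orbital integral of $e_\rho$ is just $e_\rho(z)$ times a volume, giving $J^{G,\xi}_{[z]}(e_\rho) = c \cdot \theta_\lambda(z)\,\theta_\mu(z)$ for a positive constant $c$ independent of $z$; summing over $z$ yields $c\cdot |\mu_l(\mathbb{F}_q)|$ when $\theta_\lambda \theta_\mu$ is trivial on $\mu_l$, and $0$ otherwise, since these central characters agree precisely when $\rho_\lambda \cong \rho_\mu^\vee$. For the non-central elliptic contributions, I would pass to the Lie algebra using the strategy of the proof of Theorem \ref{nonexistence}: replace $e_{\rho_\mu}$ on the centralizer side by the Fourier transform of a characteristic function supported on a regular semisimple stratum (via Springer's hypothesis and Kazhdan's theorem), then invoke the Lie algebra trace formula (Theorem \ref{TFL}) combined with Poisson summation to swap to the Fourier transform side, where the support is confined to regular semisimple elements. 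Since $l$ is prime, the Weyl group of an anisotropic maximal torus over $F$ is cyclic of order $l$, and the explicit orbital integral can be controlled using the Deligne–Lusztig character formula \cite[Theorem 4.2]{DL}; the combinatorial sums $\sum_{w} \theta_\lambda^w \theta_\mu$ over Weyl elements cancel unless $\theta_\mu$ lies in the Weyl orbit of $\theta_\lambda^{-1}$.

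The main obstacle is the precise calibration in the contragredient case: one must verify that the non-central elliptic contributions vanish and leave exactly $1$ from the central contribution (after carefully normalising volumes), rather than some other positive integer that would yield a higher multiplicity. The primality of $l$ is essential here, both in restricting elliptic semisimple elements to the simple dichotomy \emph{central or regular anisotropic} and in making the Weyl group cyclic of prime order $l$, which ensures the Deligne–Lusztig character orthogonality relations give a clean cancellation rather than a partial one.
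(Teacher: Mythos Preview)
Your proposal has the two main pieces of the argument inverted, and this is a genuine gap rather than a cosmetic difference.

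The equivalence class $[z]$ for a central element $z\in Z_G(\mathbb{F}_q)$ is \emph{not} the singleton $\{z\}$: by definition it consists of all $\gamma\in G(F)$ whose semisimple part is $G(F)$-conjugate to $z$, i.e.\ all $zu$ with $u$ unipotent. Hence $J^{G}_{[z]}(e_\rho)$ is not ``$e_\rho(z)$ times a volume''; it is (after using $(\theta_\lambda\theta_\mu)|_{Z_G(\mathbb{F}_q)}=1$) equal to the full unipotent term $J^{G}_{unip}(e_\rho)$, which is the hard part of the computation. In the paper the sum over central classes contributes $|Z_G(\mathbb{F}_q)|\,J^{G}_{unip}(e_\rho)$, and it is \emph{this} term that is handled by passage to the Lie algebra, Springer's hypothesis, and the trace formula of Theorem~\ref{TFL}. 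The value one obtains is $|Z_G(\mathbb{F}_q)|\cdot l\,\frac{q-1}{q^l-1}$, not $1$.

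Conversely, the regular elliptic classes $[s]$ with $s\in T_\lambda(\mathbb{F}_q)\smallsetminus Z_G(\mathbb{F}_q)$ are the ones that can be evaluated directly: since $s$ is regular semisimple, $[s]$ is a single $G(F)$-conjugacy class and $J^{G}_{[s]}(e_\rho)$ is an ordinary (unweighted) elliptic orbital integral, computed explicitly via Kottwitz's result on integral conjugacy. Summing these character values over $s$ and using that $\theta_\lambda^{q^i}\theta_\mu^{q^j}$ is trivial for exactly $l$ pairs $(i,j)$ in the contragredient case (and for none otherwise) is where the ``$+1$'' actually appears; the remaining part of this regular-elliptic sum precisely cancels the unipotent contribution. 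So your expectation that ``the non-central elliptic contributions vanish and leave exactly $1$ from the central contribution'' has the bookkeeping backwards, and the calibration you flag as the main obstacle cannot be resolved within the scheme you outline. (A minor point: the paper takes $\xi=0$ here rather than $\xi$ in general position; the vanishing theorem \cite[Theorem~3.2.5]{Yu} is not invoked for this proof.)
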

\begin{proof}
In $SL_l$, there is exactly one conjugacy class of anisotropic tori, which are isomorphic to the norm $1$ torus $ \mathrm{R}_{\mathbb{F}_{q^n}| \mathbb{F}_q}^1 \mathbb{G}_{m} $. Hence  $T_\lambda(\mathbb{F}_q)=T_\mu(\mathbb{F}_q)$ is the group of norm $1$ elements in $\mathbb{F}_{q^n}^{\times}$ for the norm map of $\mathbb{F}_{q^n}|\mathbb{F}_q$.  
Since Deligne-Lusztig induction preserves the property of being contragredient (\cite[Proposition 11.4]{DM}), $\rho_\lambda$ is contragredient to $\rho_\mu$ if and only if $(T_\lambda, \theta_\lambda)$ is conjugate to $(T_\mu, \theta_\mu^{-1})$. Using the identification $W\cong \mathfrak{S}_l$, the torus $T_\lambda$ corresponds to the conjugacy class of a cyclic permutation $w$ of length $l$. Moreover, the group $W(G, T_\lambda)(\mathbb{F}_q)$ is isomorphic to the cyclic group generated by $w$. 
We have
\[T_\lambda(\mathbb{F}_q)\cong (\mathbb{F}_{q^l}^{\times})^{\mathrm{Nm}=1}:= \{ s\in \mathbb{F}_{q^l}^{\times} | s^{q^{l-1}+q^{l-2}+\cdots+1   }=1   \} ,   \]
and, under the above isomorphisms, the generator $w$ sends $s\in (\mathbb{F}_{q^l}^{\times})^{\mathrm{Nm}=1}$ to $s^q$. Therefore, $\theta_\lambda$ being in general position implies that \begin{equation} \theta_\lambda^{q^i}\neq \theta_\lambda, \quad \text{  if } l\nmid i.   \end{equation}
If $\rho_\lambda$ is contragredient to $\rho_\mu$, we can assume that $\theta_\lambda=\theta_\mu^{-1}$. If $\rho_\lambda$ is not contragredient to $\rho_\mu$ we have $\theta_\lambda\neq \theta_\mu^{-q^i}$ for all $i\in \mathbb{Z}$, hence
\begin{equation}\label{10.3}\theta_\lambda^{q^i}\theta_\mu^{q^j}\neq 1, \quad \forall i,j\in \mathbb{Z}. \end{equation}
Moreover, we clearly have $\theta_\lambda^{q^i-1}|_{Z_G(\mathbb{F}_q)}=1$ for all $i\in\mathbb{N}$ since the order of $Z_G(\mathbb{F}_q)$ divides $q-1$. 
A final remark is that we may assume \begin{equation}\label{10.4}(\theta_\lambda\theta_\mu)|_{Z_G(\mathbb{F}_q)}=1,\end{equation} otherwise there is no cuspidal automorphic representation with the desired properties as we can see by checking its central character which must be trivial on $Z_G(\mathbb{F}_q)\subseteq Z_G(F)$.

For any automorphic representation $\pi=\otimes'\pi_v$ of $SL_l(\AAA)$. 
Since $\rho_\lambda$ is cuspidal, if $\pi_\lambda$ contains $\rho_\lambda$, then $\pi_\lambda$ is supercuspidal and hence $\pi$ must be cuspidal automorphic. We will do an explicit calculation with our coarse geometric expansion by setting $\xi=0$. 

Let $\rho$ be the representation of $G(\mathcal{O})$ defined as the tensor product of $\rho_\lambda$ (resp. $\rho_\mu$) of $G(\mathcal{O}_\lambda)$ (resp. of $G(\mathcal{O}_\mu)$) and the trivial representation of $G(\mathcal{O}_v)$ for places $v$ outside $S=\{\lambda,\mu\}$. Let \[e_\rho=\begin{cases}  \mathrm{Tr}(x^{-1}|\rho) , \quad x\in G(\mathcal{O}); \\
0, \quad x\notin G(\mathcal{O}).
\end{cases}
\]
After \cite[Proposition 5.2.6]{Yu}, we have
\[ J^{G}(e_\rho)=\sum_{\pi}m_\pi\dim \mathrm{Hom}_{G(\mathcal{O})}(\rho, \pi). \]
It suffices to prove that $J^{G}(e_\rho)$ equals $1$ or $0$ depending on whether or not $\rho_\lambda$ and $\rho_\mu$ are contragredient.

Using our coarse geometric expansion (Corollary \ref{5.3}): 
\[J^{G}(e_\rho)=\sum_{o\in \mathcal{E}}  J^{G}_{o}(e_\rho).  \]
First of all, suppose $o\in \mathcal{E}$ is a class such that $J_{o}^{G}(e_\rho)\neq 0$.
As $X\mapsto J_{o}^{G, X}(e_\rho)$ is a quasi-polynomial (Theorem \ref{polynomial}), there is an $X\in \ago_B$ which we can assume to be deep enough in the positive chamber, so that $J_{o}^{G, X}(e_\rho)\neq 0$. 
Since the support of $e_\rho$ is contained in $G(\ooo)$, the Theorem \ref{Main} implies that there is an element $\gamma\in o$ and $x\in G(\AAA)$ such that $x^{-1}\gamma x\in G(\ooo)$.  However, this implies that $\gamma^n \in xG(\ooo)x^{-1}\cap G(F)$ for any $n$. Note that $xG(\ooo)x^{-1} \cap G(F)$ is finite, so the element $\gamma$ is a torsion element. By \cite[Proposition 2.3.2]{Yu}, $\gamma$ admits Jordan-Chevalley decomposition. Moreover, \cite[Theorem 2.4.1]{Yu} implies that we can take as representative a semi-simple element $\sigma\in G(\mathbb{F}_q)$. Furthermore, the Deligne-Lusztig induced character $R_{T_\lambda}^{G}(\theta_\lambda)$ is supported in the set of elements whose semi-simple part can be conjugate to an element in $T_\lambda(\mathbb{F}_q)$. Therefore, we conclude that 
\[  J^{G}(e_\rho) = \sum_{s\in T_\lambda(\mathbb{F}_q) /\sim \text{conj}  }  J_{[s]}^{G} (e_\rho).  \]

Note that two elements in $s_1, s_2\in T_\lambda(\mathbb{F}_q)$ are conjugate if there is $1\leq i\leq l$ such that $s_1^{q^{i}}=s_2$. The fact that $l$ is a prime implies that any element in $T_\lambda(\mathbb{F}_q)$ is either regular and elliptic or lies in center of $G=SL_l$. If $s\in T_\lambda(\mathbb{F}_q)$ is regular, then there are $l$ elements in $T_\lambda(\mathbb{F}_q)$ that are conjugate to $s$. Therefore we have  
\begin{equation} J^{G}(e_\rho) = \frac{1}{l} \sum_{s\in T_\lambda(\mathbb{F}_q) - Z_{G}(\mathbb{F}_q) }  J_{[s]}^{G} (e_\rho)   +  |Z_G(\mathbb{F}_q)| J_{unip}^{G} (e_\rho)  .  \end{equation}

Suppose that $s\in T_\lambda(\mathbb{F}_q)$ is regular and elliptic, there is no proper Levi subgroup of $G$ containing $s$. We have by definition
\[ J_{[s]}^{G} (e_\rho) = \int_{G(F)\backslash G(\AAA)}  \sum_{\gamma \in T_\lambda(F) \backslash G(F)} e_{\rho}(x^{-1}\gamma^{-1} s\gamma x )\d x.   \]
Therefore   \[ J_{[s]}^{G} (e_\rho) = \mathrm{vol}(T_\lambda(F)\backslash T_\lambda(\AAA))   O_{s}(e_\rho),  \]
where \[ O_{s}(e_\rho)=\int_{T_{\lambda}(\AAA)\backslash G(\AAA)}e_\rho(x^{-1}sx)\d x. \]
If $x\in G(\AAA)$ is an element such that $x^{-1}sx\in G(\mathcal{O})$, then by \cite[Proposition 7.1]{Ko3}, $x^{-1}sx$ and $s$ are conjugates by an element in $G(\mathcal{O})$. In this case, we have 
\[ e_\rho(x^{-1}sx)= (\sum_{i=1}^{l} \theta_\lambda(s)^{q^i})(\sum_{i=1}^{l}\theta_\mu(s)^{q^i}). \]
If $x^{-1}sx\notin G(\mathcal{O})$, then $e_\rho(x^{-1}sx)=0$. 
We obtain that \[ O_{s}(e_\rho)=(\sum_{i=1}^{l} \theta_\lambda(s)^{q^i})(\sum_{i=1}^{l}\theta_\mu(s)^{q^i})O_{s}(\mathbbm{1}_{G(\mathcal{O})}). \]
To calculate $O_{s}(\mathbbm{1}_{G(\mathcal{O})})$, we can decompose it into product of local orbital integrals: \[ O_{s}(\mathbbm{1}_{G(\mathcal{O})})=\prod_{v} \int_{T_{\lambda}(F_v) \backslash G(F_v)} \mathbbm{1}(x^{-1}sx) \d x.   \]
We use the above result of Kottwitz again, then we have 
\[\int_{T_{\lambda}(F_v) \backslash G(F_v)} \mathbbm{1}(x^{-1}sx) \d x = \frac{\mathrm{vol}(G(\mathcal{O}_v))}{\mathrm{vol}(T_\lambda(\mathcal{O}_v))} .  \]
We obtain that for $s\in T_\lambda(\mathbb{F}_q)-Z_{G}(\mathbb{F}_q)$, 
\begin{equation}
J^{G}_{[s]}(e_\rho)=\frac{\mathrm{vol}(T_\lambda(F)\backslash T_\lambda(\AAA))}{\mathrm{vol}(T_\lambda(\mathcal{O}))}(\sum_{i=1}^{l} \theta_\lambda(s)^{q^i})(\sum_{i=1}^{l}\theta_\mu(s)^{q^i}). 
\end{equation}

For any non-trivial character $\theta$ of $T_\lambda(\mathbb{F}_q)$ that is trivial on $Z_G(\mathbb{F}_q)$ we have, \[ \sum_{s\in T_\lambda(\mathbb{F}_q)-Z_G(\mathbb{F}_q)} \theta(s)=-|Z_G(\mathbb{F}_q)|.  \]
Therefore, if  $\rho_\mu$ is not contragredient to $\rho_\lambda$, then applying \eqref{10.3} and \eqref{10.4}, we have
\[   (\sum_{i=1}^{l} \theta_\lambda(s)^{q^i})(\sum_{i=1}^{l}\theta_\mu(s)^{q^i}) = -|Z_G(\mathbb{F}_q)|  l^{2} .   \]
If $\rho_\mu$ is contragredient to $\rho_\lambda$, then there are $l$ trivial characters among $\theta_\mu^{q^{i}}\theta_\lambda^{q^{j}}$ ($1\leq i ,j\leq l$), we deduce similarly that 
\[   (\sum_{i=1}^{l} \theta_\lambda(s)^{q^i})(\sum_{i=1}^{l}\theta_\mu(s)^{q^i}) = - |Z_G(\mathbb{F}_q)| (l^{2}-l)+   ( \frac{q^l-1}{q-1}-|Z_G(\mathbb{F}_q)|)l.   \]
Here we have used the fact that  $|T_\lambda(\mathbb{F}_q)|= \frac{q^l-1}{q-1}$. 

To calculate  ${\mathrm{vol}(T_\lambda(F)\backslash T_\lambda(\AAA))}/{\mathrm{vol}(T_\lambda(\mathcal{O}))}$, we may use the short exact sequence:
\[1 \longrightarrow  T_\lambda(\mathcal{O})/T_\lambda(\mathbb{F}_q) \longrightarrow  T_\lambda(\mathbb{A})/T_\lambda(F)\longrightarrow T_\lambda(\mathbb{A})/T_\lambda(F)T_\lambda(\mathcal{O})\longrightarrow 1.   \]
It's easy to see that $T_\lambda(\mathbb{A})/T_\lambda(F)T_\lambda(\mathcal{O})$ is trivial since $F$ is the function field of the projective line $\mathbb{P}^{1}_{\mathbb{F}_q}$ which has trivial Jacobian variety. We obtain \begin{equation}\frac{\mathrm{vol}(T_\lambda(F)\backslash T_\lambda(\AAA))}{\mathrm{vol}(T_\lambda(\mathcal{O}))}=\frac{1}{|T_\lambda(\mathbb{F}_q)|}=\frac{q-1}{q^l-1}. \end{equation}

To summarize, we have \begin{equation}\label{10.8}J^{G}(e_\rho)  = \begin{cases}  |Z_G(\mathbb{F}_q)|J^{G}_{unip}(e_\rho) -  (\frac{q-1}{q^l-1}) |Z_G(\mathbb{F}_q)| l  + 1, \quad \text{$\rho_\lambda$   is contragredient to $\rho_\mu$ };
\\   |Z_G(\mathbb{F}_q)|J^{G}_{unip}(e_\rho)- (\frac{q-1}{q^l-1})|Z_G(\mathbb{F}_q)| l, \quad \text{$\rho_\lambda$   is not contragredient to $\rho_\mu$}  .
  \end{cases}  \end{equation}

In the following, we calculate $J^{G}_{unip}(e_\rho)$. We will derive it from calculations of a trace formula of Lie algebra.

As in the proof of the Theorem \ref{nonexistence}, we fix a $G$-equivariant, non-degenerate bilinear form $\langle, \rangle$ defined  over $\mathbb{F}_q$. We also use the same additive character $\psi$ of $\mathbb{A}/F$ for our Fourier transform. 

Let's fix two regular elements $x$ and $y$ in $\mathfrak{t}_\lambda(\mathbb{F}_q)$ which are not $G(\overline{\mathbb{F}}_q)$-conjugate (equivalently not $G({\mathbb{F}}_q)$-conjugate). Let $\Omega_x$ (resp. $\Omega_y$) be the characteristic function over $\mathfrak{g}(\mathbb{F}_q)$ of the conjugacy class of $x$ (resp. of $y$). We have a character defined by
\begin{align*} \chi_\lambda: \mathfrak{t}_\lambda(\mathbb{F}_q)&\longrightarrow \mathbb{C}^{\times} ;\\ z&\longmapsto \psi(\langle z,x\rangle). 
\end{align*}
Similarly we define $\chi_\mu$.

Since the bilinear form $\langle, \rangle$ is non-degenerate on
$\ttt_\lambda(\mathbb{F}_q)$ (\cite[Lemma 5]{McN}), the fact $x$ is not conjugate to $y$ implies that $\chi_\lambda$ is not conjugate to $\chi_\mu$. We know that for the torus $T_\lambda$, the Frobenius element acts by an element of Weyl group, therefore the conjugates of $\chi_\lambda$ (resp. $\chi_\mu$) are $\chi_\lambda^{q^i}$ (resp. $\chi_\mu^{q^{i}}$), $i=1,\ldots, l$. We deduce that $\chi_\lambda^{q^i}\chi_\mu^{q^j}$ is non-trivial for any $1\leq i,j\leq l$. By a result of Letellier \cite[7.3.3]{Letellier} (note that his Fourier transform is normalized differently from the one used here), the Fourier transform of the characteristic function $\mathbbm{1}_{\Omega_x}$ is given by a Lie algebra analogue of the Deligne-Lusztig induced character: for two commuting elements $s,n\in \ggg(\mathbb{F}_q)$ with $s$ semi-simple and $n$ nilpotent: 
 \begin{equation}\label{DLf} \hat{\mathbbm{1}_{\Omega_x}}(s+n)= \frac{1}{|G_s(\mathbb{F}_q)|}   \sum_{\{     \gamma \in G(\mathbb{F}_q)\mid s\in \Ad(\gamma) (T_\lambda )    \}}      \chi_\lambda(\mathrm{Ad}(\gamma^{-1})s)  R_{\gamma T_\lambda \gamma^{-1}}^{G_{s}}(1)(l^{-1}(n)) ,    \end{equation}
where $l$ is any Spinger's isomorphism $l: \mathcal{U}_{G}\rightarrow\mathcal{N}_\ggg$ defined over $\mathbb{F}_q$. 
Let $\varphi=\otimes_v\varphi_v\in C_c^{\infty}(\ggg(\AAA))$ with $\varphi_v=\mathbbm{1}_{G(\mathcal{O}_v)}$ if $v\neq \lambda, \mu $, and $\varphi_{\lambda}$ (resp. $\varphi_\mu$) is a function supported in $\ggg(\mathcal{O}_\lambda)$ (resp. $\ggg(\mathcal{O}_\mu)$) so that for any element $x\in \ggg(\mathcal{O}_\lambda)$, the value $\varphi_\lambda(x)$ (resp. $\varphi_\mu(x)$) is given by the right-hand side of \eqref{DLf} with $s,n$ being respectively the semi-simple part and nilpotent part of $\bar{x}\in \ggg(\mathbb{F}_q)$. Using the quasi-polynomial behaviour (Theorem \ref{polynomial}) of $X\mapsto J^{\ggg,X}_{nilp}(\varphi)$ and $X\mapsto J_{unip}^{G, X}(e_\rho)$ and the Theorem \ref{Main}, we deduce that 
\[ J_{nilp}^{\ggg}(\varphi) =J^{G}_{unip}(e_\rho).\]

We know that the Fourier transform of ${\varphi}$ (by a direct calculation, see \cite[5.3.2]{Yu}) is $\otimes_v\hat{\varphi}_v$ with $\hat{\varphi}_v= \mathbbm{1}_{\ggg(\mathcal{O}_v)}$ for $v\neq \lambda, \mu$, \[ \hat{\varphi}_\lambda(z) =\begin{cases} q^{-\frac{1}{2}(l^2-l)}\mathbbm{1}_{\Omega_x}(-\bar{z}), \quad z\in \ggg(\mathcal{O}_\lambda) ; \\       0, \quad z\notin \ggg(\mathcal{O}_\lambda)  ;   \end{cases} \]
where $\bar{z}$ is the reduction mod $\wp_\lambda$ of $z$, 
and $\hat{\varphi}_\mu$ is given by a similar formula. It's then easy to see that \[ J^{\ggg}(\hat{\varphi})=0. \]
In fact, $\hat{\varphi}$ is supported in $\ggg(\mathcal{O})$, therefore $ J_{o}^{\ggg}(\hat{\varphi})$ vanishes except if $o$ is represented by a semi-simple element in $s\in G(\mathbb{F}_q)$ (\cite[2.4.2]{Yu}). If $J_{[s]}^{\ggg}(\hat{\varphi})\neq 0$, by
 \cite[Proposition 7.1]{Ko3}, we deduce that $s$ is conjugate both to $-x$ and to $-y$. However, by the choices of $x$ and $y$, this is impossible.

By our trace formula of the Lie algebra (Theorem \ref{TFL}), we get that 
\[ J^{\ggg}(\varphi) =0.  \] 
As in the group case, \[ J^{\ggg}_{o}(\varphi)=0,  \]
except if $o$ is represented by an element in $s\in \mathfrak{t}_{\lambda}(\mathbb{F}_q)$.
We obtain 
\[ J_{nilp}^{\ggg}(\varphi) =- \sum_{o\in \mathcal{E}-\{[0]\}}J^{\ggg}_{o}(\varphi)=-\frac{1}{l}\sum_{s\in  \mathfrak{t}_{\lambda}(\mathbb{F}_q)-\{0\}  }  J^{\ggg}_{o}(\varphi)  , \]
where the last equality follows from the fact that every element in $\mathfrak{t}_{\lambda}(\mathbb{F}_q)-\{0\}$ is regular hence is conjugate to $l$ elements in $\mathfrak{t}_{\lambda}(\mathbb{F}_q)-\{0\}$. 
Moreover every element in $\mathfrak{t}_\lambda(\mathbb{F}_q)-\{0\}$ is also elliptic. As in the group case again, we find that for $s\in \mathfrak{t}_\lambda(\mathbb{F}_q)-\{0\}$:
\[J_{[s]}^{\ggg}(\varphi)=\frac{\mathrm{vol}(T_\lambda(F)\backslash T_\lambda(\AAA))}{\mathrm{vol}(T_\lambda(\mathcal{O}))}(\sum_{i=1}^{l} \chi_\lambda(s)^{q^i})(\sum_{i=1}^{l}\chi_\mu(s)^{q^i}). 
\]
Therefore, \[ J^{\ggg}_{nilp}(\varphi)=-\frac{1}{l}\sum_{s\in \mathfrak{t}_\lambda(\mathbb{F}_q)-\{0\}} \frac{\mathrm{vol}(T_\lambda(F)\backslash T_\lambda(\AAA))}{\mathrm{vol}(T_\lambda(\mathcal{O}))}(\sum_{i=1}^{l} \chi_\lambda(s)^{q^i})(\sum_{i=1}^{l}\chi_\mu(s)^{q^i}).  \]
Recall that for any $1\leq i,j\leq l$, the character \(\chi_\lambda^{q^{i}}\chi_\mu^{q^{j}} \)
is non-trivial on $\mathfrak{t}_\lambda(\mathbb{F}_q)$. Therefore, \[\sum_{s\in \mathfrak{t}_\lambda(\mathbb{F}_q)-\{0\}} \chi_\lambda^{q^{i}}(s)\chi_\mu^{q^{j}}(s)=-1.  \]
We deduce that \begin{equation}\label{Jnilp} J^{\ggg}_{nilp}(\varphi)=l\frac{q-1}{q^l-1} .  \end{equation}

Finally, we conclude from \eqref{Jnilp} and \eqref{10.8} that 
 \begin{equation}J^{G}(e_\rho)  = \begin{cases} 1, \quad \text{$\rho_\lambda$   is contragredient to $\rho_\mu$ };
\\  0, \quad \text{$\rho_\lambda$   is not contragredient to $\rho_\mu$}  .
  \end{cases}  \end{equation}
As we have explained in the beginning of the proof, this suffices to prove the theorem. 
\end{proof}

\end{document}